\documentclass[12pt]{article}

\usepackage{amsmath}
\usepackage{amssymb}
\usepackage{fullpage}
\usepackage{amsthm}
\usepackage{graphicx}
\usepackage{placeins}
\usepackage{extpfeil}
\usepackage{enumerate}
\usepackage{caption}
\usepackage{subcaption}
\usepackage{hyperref}
\usepackage{xcolor}
\usepackage{pbox}
\usepackage[numbers,sort&compress]{natbib}

\hypersetup{hidelinks}

\newtheorem{thm}{Theorem}[section]
\newtheorem{theorem}{Theorem}
\newtheorem{prop}{Proposition}[section]

\newtheorem{rmk}{Remark}[section]
\newtheorem{lem}{Lemma}[section]

\newcommand{\dive}{{\rm div \hspace{0.05cm} }}

\newcommand{\R}{\mathbb{R}}
\newcommand{\bS}{\mathbb{S}}

\newcommand{\supp}{\mathrm{supp}\hspace{0.05cm}}

\newcommand{\bmx}{\begin{bmatrix}}
\newcommand{\emx}{\end{bmatrix}}
\newcommand{\be}{\begin{equation}}
\newcommand{\ee}{\end{equation}}

\begin{document}

\title{Classification of local $(-1)$-homogeneous axisymmetric solutions of the $3$D stationary Navier-Stokes equations with logarithmic singular-ray behavior}
\author{Xukai Yan\footnote{Department of Mathematics, Oklahoma State University, 401 Mathematical Sciences Building, Stillwater, OK 74078, USA. Email: xuyan@okstate.edu. Partially supported by NSF Career Award DMS-2441137 and Simons Foundation Travel Support for Mathematicians 962527.}}
\date{}
\maketitle

\abstract{We study the existence and classification of local $(-1)$-homogeneous axisymmetric solutions $(u, p)$ of the three-dimensional incompressible stationary Navier-Stokes equations near a singular ray. We focus on such solutions with Type II behavior, which satisfy $0<\limsup_{x\in\bS^2, x\to P}|u|/|\ln \text{dist}(x, P)|<\infty$, where $P$ is the south or north pole. 
 %satisfy  $|u|=O(|\ln \text{dist}(x, P)|)$ as $x\to P$ on $\bS^2$, where $P$ is the south or north pole. 
  Apart from the Landau solutions, these are the least singular  nontrivial $(-1)$-homogeneous axisymmetric solutions.
  We construct a four-parameter family of local $(-1)$-homogeneous axisymmetric solutions with at most logarithmic growth near the singular ray and represent them as convergent series whose coefficients are determined recursively. 
  Conversely, we prove that every local  $(-1)$-homogeneous axisymmetric solution  satisfying $|u|=o(\text{dist} (x, P)^{-1})$ as $x\to P$ on $\bS^2$ belongs to this family. 
%We  construct a four-parameter family of such solutions and represent them as convergent series whose coefficients are determined recursively. Conversely, we prove that every local  nontrivial $(-1)$-homogeneous axisymmetric solution  satisfying $|u|=o(\text{dist} (x, P)^{-1})$ as $x\to P$ on $\bS^2$ belongs to this family if it is not a Landau solution.
In particular, this family %the constructed family of solutions
 contains Type II solutions with nonzero swirl, in contrast to the global $(-1)$-homogeneous axisymmetric Type II solutions in $\R^3\setminus\{x'=0\}$, which have no swirl.  We also establish derivative estimates for the constructed solutions and identify the singular force generated by these solutions across the singular ray in the sense of distributions. }

\section{Introduction}\label{sec_1}

Consider the incompressible stationary Navier-Stokes equations in $\mathbb{R}^3$,
\begin{equation}\label{NS}
	\left\{
	\begin{aligned}
    	& -\Delta u + ( u \cdot \nabla ) u + \nabla p = 0, \\
		& \dive u=0,
   \end{aligned}
   \right.
\end{equation}
where $u: \mathbb{R}^3\to\mathbb{R}^3$ is the velocity field and $p:\mathbb{R}^3\to\mathbb{R}$ is the pressure. The system is invariant under the scaling $u(x)\to \lambda u(\lambda x)$ and $p(x)\to \lambda^2 p(\lambda x)$ for any $\lambda>0$. It is therefore natural to study solutions invariant under this scaling,
for which $u$ is $(-1)$-homogeneous and $p$ is $(-2)$-homogeneous. Throughout this paper we refer to them as $(-1)$-homogeneous solutions.
More generally, a function $f$ is said to be $(-\alpha)$-homogeneous if $f(\lambda x)= \lambda^{-\alpha} f(x)$ for every $\lambda>0$.

Write $x=(x_1,x_2,x_3)\in \R^3$ and $x'=(x_1, x_2)$. Let $(r, \phi, x_3)$ be the standard cylindrical coordinates, where $r=|x'|$ and $\phi$ is the azimuthal angle about the $x_3$-axis.
For $r>0$, a vector field $u$ can be written as $u = u^r e_r +  u^\phi e_{\phi}+u^3e_3$, where $e_r,  e_{\phi}, e_3$ are the associated orthonormal basis vectors. We say a solution $(u, p)$ to (\ref{NS}) is \emph{axisymmetric} if $u^r$,  $u^{\phi}, u^3$ and $p$ are independent of $\phi$, and \emph{no-swirl} if $u^{\phi}\equiv 0$.

In 1944, Landau \cite{Landau} discovered a three-parameter family of explicit $(-1)$-homogeneous solutions of the stationary Navier-Stokes equations in $C^\infty(\mathbb{R}^3\setminus\{0\})$, see also \cite{SL, SQ}.
These solutions, now known as \emph{Landau solutions}, are axisymmetric, no-swirl, and have one singular point at the origin.
Tian and Xin proved in \cite{TianXin} that all nontrivial $(-1)$-homogeneous, axisymmetric solutions of (\ref{NS}) in $C^\infty(\mathbb{R}^3\setminus\{0\})$ are Landau solutions.  \v{S}ver\'{a}k \cite{Sverak} later removed the axisymmetry assumption and established the following result:

\renewcommand{\thetheorem}{\Alph{theorem}}
\begin{theorem}[\cite{Sverak}]\label{thm:Sverak}
	All nonzero $(-1)$-homogeneous  solutions of \eqref{NS} in $C^2(\mathbb{R}^3\setminus\{0\})$ are Landau solutions.
\end{theorem}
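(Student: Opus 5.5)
The plan is to follow Šverák's strategy: reduce to an elliptic problem on $\bS^2$, identify the Landau family through a flux parameter, and close with a Liouville/uniqueness argument. Write $u(x)=|x|^{-1}U(x/|x|)$ and $p(x)=|x|^{-2}P(x/|x|)$, so that \eqref{NS} becomes a smooth elliptic system for $(U,P)$ on $\bS^2$. By homogeneity, the distributional equation on all of $\R^3$ is
\[
-\Delta u+\operatorname{div}(u\otimes u)+\nabla p=b\,\delta_0
\]
for some vector $b\in\R^3$, since the only distribution supported at the origin that is compatible with the scaling is a multiple of $\delta_0$. For each $b$ there is exactly one Landau solution with that force. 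After a rotation we may take $b=\beta e_3$. The goal is then to show that any solution with this $b$ coincides with the corresponding Landau solution. When $b=0$ the aim is to show $u\equiv 0$, which contradicts nonzero-ness unless $u$ is Landau with $b\ne0$.

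The key steps are as follows.

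\textbf{Step 1: a maximum-principle structure.} Using homogeneity, rewrite the momentum equation in terms of the head pressure
\[
H=\tfrac12|u|^2+p .
\]
Let $\omega=\operatorname{curl}u$. Then $-\Delta u+\omega\times u+\nabla H=0$, and $H$ satisfies
\[
-\Delta H+u\cdot\nabla H=-|\omega|^2\le 0 ,
\]
so $H$ obeys a maximum principle. The $(-2)$-homogeneity of $H$ turns this into a statement on $\bS^2$ with a lower-order term. Šverák's key observation is that on the sphere the quantity $|x|^2H$, or a suitable combination such as $x\cdot u$ together with $H$, satisfies a strong maximum principle. From this one derives that the radial part $x\cdot u$ essentially determines the solution.

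\textbf{Step 2: the two-dimensional reduction.} Set $\psi=|x|\,u\cdot\frac{x}{|x|}$ on $\bS^2$. Incompressibility gives $\operatorname{div}_{\bS^2}U_{\tan}=-\psi$. The radial component of the equation, combined with the pressure, yields a scalar equation on $\bS^2$ of Liouville type, schematically
\[
\Delta_{\bS^2}\varphi+\dots=e^{\varphi}\cdots .
\]
Its solutions are classified: they are pulled back from the standard solution by conformal (Möbius) maps of $\bS^2$. This is where the Landau family appears, with the conformal parameter corresponding to $\beta$. The remaining step is to show that the tangential part of $U$ is forced to be the gradient-like field determined by $\psi$. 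For this one uses an energy/integral identity on $\bS^2$, for example integrating the vorticity equation against suitable multipliers, to conclude that $U_{\tan}$ is curl-free on the sphere. This removes any swirl-type component.

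The main obstacle is the middle step: deriving the closed scalar equation of Liouville type and proving that no other (non-axisymmetric) components survive. The first thing I would try is to show that the $2$D quantity $\Phi=\frac12|U|^2+P-c\,\psi$ is constant via the maximum principle on the compact manifold $\bS^2$. Given the $C^2$ regularity there are no boundary terms, so $\Phi$ cannot attain a strict interior extremum unless it is constant. Constancy of $\Phi$ would collapse the system to the scalar Liouville equation, at which point the classical classification of its solutions on $\bS^2$ finishes the proof.
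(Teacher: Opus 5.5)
The paper gives no proof of this statement; it is quoted from \v{S}ver\'ak. So your sketch has to stand on its own, and it does not. The architecture is right (maximum principle, then a curl-free tangential field, then Liouville, then M\"obius maps), but the decisive step is left open, and the one concrete quantity you propose is false. With $\psi=x\cdot u$ on $\bS^2$, every Landau solution satisfies $P+\tfrac12|U_{\tan}|^2=\psi$. Hence your $\Phi=\tfrac12|U|^2+P-c\psi$ equals $\tfrac12\psi^2+(1-c)\psi$ there. This is nonconstant for every $c$, because $\psi$ is nonconstant on a nontrivial Landau solution, so no maximum principle can make $\Phi$ constant.

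The quantity that works is $g:=|x|^2H-x\cdot u-\tfrac12(x\cdot u)^2$, which on $\bS^2$ is $g=P+\tfrac12|U_{\tan}|^2-\psi$. Let $\mathcal L=-\Delta+u\cdot\nabla$. The needed ingredients are:
\begin{itemize}
\item your identity $\mathcal LH=-|\omega|^2$ together with the $(-2)$-homogeneity of $H$, which give $\mathcal L(|x|^2H)=2H+2(x\cdot u)H-|x|^2|\omega|^2$;
\item $\mathcal L(x\cdot u)=2H$, using $x\cdot\nabla p=-2p$;
\item $\mathcal L(\tfrac12f^2)=f\mathcal Lf-|\nabla f|^2$;
\item $\nabla(x\cdot u)=x\times\omega$, which follows from $(x\cdot\nabla)u=-u$.
\end{itemize}
Together these give $\mathcal Lg=|\nabla(x\cdot u)|^2-|x|^2|\omega|^2=-(x\cdot\omega)^2$. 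Since $g$ is $0$-homogeneous, this reads $\Delta_{\bS^2}g-U_{\tan}\cdot\nabla g=(x\cdot\omega)^2\ge0$ on $\bS^2$, with no zeroth-order term. Hence $g\equiv c$ and the radial vorticity vanishes. This is what makes $U_{\tan}$ curl-free, so the separate integral identity in your Step~2 is neither supplied nor needed.

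Even granted this, constancy of $g$ does not collapse the system to Liouville's equation. Write $U_{\tan}=\nabla\phi$ with $\Delta_{\bS^2}\phi=-\psi$. The radial equation $\mathcal L(x\cdot u)=2H$ then becomes the coupled equation $-\Delta\psi+\nabla\phi\cdot\nabla\psi=2c+2\psi+\psi^2$. The missing step is the substitution $w=e^{-\phi}(\psi+2)$, which turns this into $\mathrm{div}_{\bS^2}(e^{\phi}\nabla w)=-2c$. From there:
\begin{itemize}
\item integrating over $\bS^2$ gives $c=0$;
\item testing with $w$ gives $w\equiv C$;
\item $\int_{\bS^2}\Delta\phi=0$ forces $C>0$.
\end{itemize}
Only then is $\Delta_{\bS^2}\phi=2-Ce^{\phi}$ the constant-curvature (Liouville) equation, whose solutions are pull-backs by M\"obius maps. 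The pressure $P=\psi-\tfrac12|\nabla\phi|^2$ is then determined, and one reads off exactly the Landau family, with the identity map giving $u=0$. The distributional force $b\,\delta_0$ and the ``uniqueness for given $b$'' framing play no role in this argument and can be dropped.
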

\renewcommand{\thetheorem}{theorem}

  For further results on homogeneous solutions of (\ref{NS}), see, for example,   \cite{CKPW,  G, Gu, JLL, KT, LZZ, PP1, PP2, PP3, Serrin, SL, SQ, W, Y, ZZ}.

Since $(-1)$-homogeneous solutions  of (\ref{NS}) are determined by their traces on $\bS^2$, the system (\ref{NS}) may be reduced to a system on $\bS^2$.
Theorem \ref{thm:Sverak} gives a complete classification of $(-1)$-homogeneous solutions of  (\ref{NS}) whose traces are in $C^2(\mathbb{S}^2)$.
A natural next problem is to study $(-1)$-homogeneous solutions of (\ref{NS}) whose traces have finitely many isolated singularities on $\bS^2$. By homogeneity, a  singular point $P\in \bS^2$ corresponds to a singular ray in $\R^3$ emanating from the origin and passing through $P$.
Hence, $(-1)$-homogeneous solutions of (\ref{NS}) with finitely many singular rays in $\mathbb R^3$ are those whose traces have finitely many isolated singularities on $\mathbb S^2$.

Motivated by this problem, in a series of works  \cite{LLY1, LLY2, LLY3, LLY4, LLY5, LY}, the author and collaborators investigated $(-1)$-homogeneous  solutions of (\ref{NS}) in $C^2(\mathbb{S}^2\setminus\{S, N\})$, where $S$ is the south pole and $N$ is the north pole. These works established classifications of axisymmetric no-swirl solutions \cite{LLY1, LLY2}, existence and nonexistence results for axisymmetric solutions with nonzero swirl \cite{LLY1, LLY3},
results on the vanishing viscosity limits of axisymmetric no-swirl solutions \cite{LLY4}, an optimal removable singularity theorem \cite{LLY5}, and asymptotic stability of small axisymmetric no-swirl solutions with logarithmic singular ray behavior \cite{LY}.

The asymptotic expansions obtained in \cite{LLY1}  for $(-1)$-homogeneous  axisymmetric solutions of (\ref{NS})  suggest the weakest possible asymptotic behavior of general $(-1)$-homogeneous solutions of (\ref{NS}) near a nonremovable singular ray is of logarithmic order, in the sense that $|u|=O(|\ln |x'||+1)$ as $|x'|\to 0$ for each $x_3\ne 0$.
The following removable singularity result in \cite{LLY5} confirms that  logarithmic growth is the critical threshold without assuming axisymmetry.

\renewcommand{\thetheorem}{\Alph{theorem}}
\begin{theorem}[\cite{LLY5}]\label{thm_B}
Let $P\in\mathbb S^2$, and let $(u,p)$ be a local $C^2$ $(-1)$-homogeneous solution of (\ref{NS}) in a punctured neighborhood of $P$ on $\mathbb S^2$.
	If $\lim_{x\in \mathbb{S}^2, x\to P}|u(x)|/|\ln \textrm{dist} (x, P)|=0$,
	then $(u, p)$ can be extended smoothly across $P$ on $\bS^2$.
\end{theorem}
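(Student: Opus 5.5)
Theorem~\ref{thm_B} is a removable singularity result, and I would prove it by working in a rescaled picture near the singular ray. Rotate coordinates so that $P=(0,0,1)$, and work in the cone region $\{x: x/|x|\in B_\delta(P)\}$. By homogeneity, $u(x)=|x|^{-1}U(x/|x|)$ and $p(x)=|x|^{-2}\Pi(x/|x|)$.

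\textbf{Step 1: reduce to a small-scale problem.} Take a point $y$ on the ray, say $y=(0,0,1)$. For small $\rho>0$, rescale via
\[
u_\rho(z)=\rho\,u(y+\rho z),\qquad p_\rho(z)=\rho^2 p(y+\rho z).
\]
This keeps \eqref{NS} invariant. On the cylinder $\{|z'|<1,\ |z_3|<1\}$, minus the axis $\{z'=0\}$, the hypothesis gives $|u_\rho(z)|\le \rho\,\varepsilon(\rho|z'|)\,|\ln(\rho|z'|)|$ with $\varepsilon\to0$. Hence $u_\rho\to0$ in $L^q$ for every $q<\infty$. More importantly, $u_\rho$ is small in a critical (scale-invariant) norm: $\|u\|_{L^{3,\infty}}$-type norms over dyadic annuli around the ray are $o(1)$, because $|\ln d|\,d = o(1)$ as $d\to 0$, which is weaker than the $1/d$ critical threshold.

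\textbf{Step 2: show $u$ is a distributional solution across the ray.} Fix a test function $\varphi$ supported near the ray, and cut off a tube $\{|x'|<\eta\}$ with a cutoff $\chi_\eta$. The error terms are as follows.
\begin{itemize}
\item $\int u\cdot\nabla\chi_\eta$-type terms are bounded by $\eta^{-1}\cdot\eta^2\,\varepsilon(\eta)|\ln\eta|\to0$.
\item $\int u\otimes u:\nabla\chi_\eta$ is bounded by $\eta\,|\ln\eta|^2\varepsilon^2\to0$.
\item The pressure term needs a bound on $p$ itself. I would obtain it by integrating the equation on $\bS^2$, or via the vorticity, to get $|p|=O(d^{-1}\,o(|\ln d|))$. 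That gives $\eta^{-1}\eta^2\cdot\eta^{-1}o(|\ln\eta|)$, which is not obviously small, so this needs care.
\end{itemize}
Rather than handle the pressure term directly, I would test only with divergence-free $\varphi$. This eliminates $p$, and then recover $p$ afterwards.

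\textbf{Step 3: upgrade to regularity.} Since $u\in L^q_{loc}$ for all $q<\infty$ across the ray and is a weak solution of the stationary Navier--Stokes equations, the standard bootstrap applies: Stokes estimates give $u\in W^{1,q}$, hence continuity, hence smoothness.

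\textbf{Main obstacle.} The key difficulty is Step 2. At the singular ray one might have a distributional singular force concentrated on the ray, namely a line Dirac $f=b\,\delta_{\{x'=0\}}$, as with Landau-type or Type II solutions. In that case the weak equation acquires a term $\int b\cdot\varphi$ along the ray. I would show that this flux coefficient is computed from the boundary integral
\[
\lim_{\eta\to0}\oint_{|x'|=\eta}\bigl(\partial_\nu u - (u\cdot\nu)u - p\nu\bigr),
\]
and that it vanishes under the $o(\ln)$ hypothesis. For the gradient term, I would use interior estimates on dyadic tubes: $|\nabla u|\lesssim d^{-1}\sup|u|+|u|^2$ by rescaled regularity, since the rescaled solution is small. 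This gives $\eta\cdot\eta^{-1}o(|\ln\eta|)$, which is still not obviously $o(1)$. So I expect to need the finer fact that the leading logarithmic coefficient is a constant times $\ln d$ (as the expansions of \cite{LLY1} suggest): the force $b$ is proportional to that coefficient, and the hypothesis forces the coefficient to be zero. Proving that this leading term is exactly of the form $c\ln d$, via a one-dimensional angular averaging of the linearized equation near $P$, is the hard step.
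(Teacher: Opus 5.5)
The paper does not reprove this result; it is quoted from \cite{LLY5}. So I assess your proposal on its own terms. There is a genuine gap, and it is exactly the step you flag as the main obstacle.

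Nothing in your Steps 1 and 3 distinguishes $o(|\ln d|)$ from $O(|\ln d|)$. The Type II solutions of Theorem~\ref{thm_main} with $c_0\neq 0$ satisfy all of the following:
\begin{itemize}
\item they are small after your rescaling, with $\|u_\rho\|_{L^q(B_1)}\lesssim \rho|\ln\rho|$;
\item they lie in $L^q_{loc}$ across the ray for every $q<\infty$;
\item yet they are genuinely singular, because by Proposition~\ref{propF} they carry the line force $-2\pi c_0\partial_{x_3}(\ln|x_3|\delta_{x'=0})e_3$.
\end{itemize}
So the whole content of the theorem is the vanishing of the concentrated force, and the proposal does not establish it. Your flux bound only gives $o(|\ln\eta|)$, not $o(1)$. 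Your suggested remedy is to first derive an expansion whose leading term is exactly $c\ln d$. That step is left unproved, and it leans on the expansions of \cite{LLY1}, which are for axisymmetric solutions, whereas Theorem~\ref{thm_B} assumes no symmetry.

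Testing only with solenoidal $\varphi$ does not help either. The cut-off field $(1-\chi_\eta)\varphi$ is no longer divergence-free, so $p$ returns through $\int p\,\varphi\cdot\nabla\chi_\eta$. Moreover, a line force $\beta\,\delta_{x'=0}/x_3$ is not a gradient, so solenoidal test functions do not annihilate it.

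The missing idea is to run your final intuition in the opposite direction: go from the force to a logarithm by linear theory, rather than from an expansion to the force.

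\emph{Structure of the force.} By Lemma~\ref{lemL_1} (the $O(\cdot)$ bounds suffice), $|\nabla u|+|p-c|\lesssim(|\ln d|+1)/d$. Hence the stress $T$ is in $L^1_{loc}$ across the ray, and $F=\partial_iT_{ij}$ is a distribution supported on the ray of transversal order at most one:
\[
F=\delta_{x'=0}\otimes a(x_3)+\partial_{x_k}\delta_{x'=0}\otimes b_k(x_3).
\]
Testing with $x_k\psi(x_3)\chi(|x'|/\eta)$ gives $|\langle b_k,\psi\rangle|\lesssim\int_{|x'|<2\eta}|T|\lesssim \eta|\ln\eta|\to 0$, so the dipole part vanishes. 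Then $(-3)$-homogeneity forces $a=\beta/x_3$ with a constant $\beta\in\R^3$.

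\emph{Showing $\beta=0$.} Localize with a cutoff $\chi$ and split $u=w+v+(\text{smooth})$.
\begin{itemize}
\item $w$ is the Stokes solution with source $-\dive(\chi\,u\otimes u)$. Since $u\otimes u\in L^q$ for all $q<\infty$, $w\in W^{1,q}$ for all such $q$, hence $w$ is bounded.
\item $v$ is the Stokes field of the line force $\chi\beta\,\delta_{x'=0}/x_3$. Integrating the Stokeslet along the line gives
\[
v=\frac{1}{4\pi x_3}\bigl(\beta+(\beta\cdot e_3)e_3\bigr)\ln\frac{1}{|x'|}+O(1).
\]
\end{itemize}
For $\beta=-2\pi c_0e_3$ this reproduces $u^3\approx c_0 x_3^{-1}\ln\rho$, consistent with the paper. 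Hence the hypothesis $u=o(|\ln|x'||)$ forces $\beta=0$. Only then is $u$ a genuine weak solution across the ray, and your Step 3 bootstrap applies.

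Equivalently, one can work on $\bS^2$, where $P$ is an isolated point. There the singular part is a finite combination of derivatives of $\delta_P$, and its $\delta_P$ component is precisely what produces the logarithm of the two-dimensional fundamental solution.
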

\renewcommand{\thetheorem}{theorem}

Theorem \ref{thm_B} is optimal: for any $\alpha>0$, there exist $(-1)$-homogeneous solutions in $C^{\infty}(\mathbb{S}^2\setminus\{S, N\})$ with nonremovable singularities at $S$ and $N$ such that \\
$\lim_{x\in \mathbb{S}^2, x\to P}|u(x)|/|\ln \textrm{dist} (x, P)|=\alpha$, where $P=S$ or $N$, see \cite{LLY2} and \cite{LLY5} for examples of such solutions.

  The asymptotic expansions obtained in \cite{LLY1, LLY2, LLY3} for $(-1)$-homogeneous axisymmetric solutions of (\ref{NS}) in $C^{2}(\mathbb{S}^2\setminus\{S, N\})$  show that such solutions  fall into three mutually exclusive classes according to their behavior near the singular $x_3$-axis:
    \begin{enumerate}
	\item[]Type I. $\sup_{|x|=1}|u(x)|<\infty$;
	\item[]Type II.  $0<\limsup_{|x|=1,x'\to 0}|u(x)|/| \ln |x'||<\infty$;
	\item[]Type III.  $\limsup_{|x|=1,x'\to 0}|x'||u(x)|>0$.
\end{enumerate}
By Theorem \ref{thm_B}, every $(-1)$-homogeneous  solution of (\ref{NS}) in $C^{2}(\mathbb{S}^2\setminus\{S, N\})$ with Type I behavior extends smoothly across $S$ and $N$ on $\bS^2$.
  Then by Theorem \ref{thm:Sverak}, such nonzero solutions are precisely the Landau solutions.

Type II solutions exhibit logarithmic growth near the singular axis and therefore lie precisely at the threshold of the removable singularity criterion in Theorem \ref{thm_B}, while Type III solutions have stronger algebraic singularity behavior. One natural  question is to study the existence and classification of Type II $(-1)$-homogeneous solutions of (\ref{NS}).
 It was proved in \cite{LLY5} that every Type II $(-1)$-homogeneous axisymmetric solution in $C^{\infty}(\mathbb{S}^2\setminus\{S, N\})$ must be no-swirl and singular at both $S$ and $N$. In particular, there is no $(-1)$-homogeneous axisymmetric Type II solution in $C^2(\mathbb S^2\setminus\{P\})$ with only one singularity at $P=S$ or $N$. These global $(-1)$-homogeneous axisymmetric no-swirl Type II solutions were classified in \cite{LLY2}, although they do not have explicit expressions.

 %The preceding nonexistence result for global Type II $(-1)$-homogeneous axisymmetric  solutions of (\ref{NS}) with nonzero swirl does not rule out the existence of local  Type II $(-1)$-homogeneous axisymmetric solutions of (\ref{NS}) with nonzero swirl near a single ray. 
 The preceding nonexistence result for global Type II $(-1)$-homogeneous axisymmetric  solutions of (\ref{NS}) with nonzero swirl does not rule out the existence of such local  solutions near a single ray. 
 In this paper, we study the existence, classification, and asymptotic behavior of local $(-1)$-homogeneous axisymmetric  solutions of (\ref{NS}) that exhibit at most Type II singular behavior near a ray. %near a singular ray.  %local $(-1)$-homogeneous axisymmetric  solutions of (\ref{NS}) near a singular ray with at most Type II singular behavior, %such local solutions of (\ref{NS})  near a singular ray,  focusing on their existence, classification, and asymptotic behavior. 
 By rotational invariance, it suffices to study such solutions near the north pole on $\bS^2$, i.e., solutions near the positive $x_3$-axis in $\R^3$.  We  construct a four-parameter family of  such solutions and represent them as convergent series whose coefficients are determined recursively. We then prove that every  local   $(-1)$-homogeneous axisymmetric solution of (\ref{NS}) satisfying $|u|=o(|x'|^{-1})$ as $x'\to 0$ for each fixed $x_3>0$
 belongs to this family. %the family of solutions we construct. % if it is not a Landau solution.  
  In particular, this family   contains Type II solutions with nonzero swirl, in contrast to the global Type II $(-1)$-homogeneous axisymmetric solutions studied in  \cite{LLY2, LLY5}, which necessarily have no swirl.

 We also identify the singular force generated by these solutions across the singular ray in the sense of distributions. This provides an analogue of the point force generated by Landau solutions, with our solutions generating a line force along   the singular ray.

 %To formulate the local problem near $N$, 
 We work in a cone centered at the positive $x_3$-axis.
 Throughout this paper, we assume $x_3>0$.  Recall that $x'=(x_1, x_2)$ and $r=|x'|$, and let $\Delta'=\partial_1^2+\partial_2^2$. Introduce the scale-invariant variables
\[
   \rho:=\frac{r}{x_3}, \quad t=\ln\rho.
   \]
   We retain the cylindrical orthonormal frame and write $e_{\rho}:=e_{r}$. Accordingly,
$u=u^{\rho}e_{\rho}+u^{\phi}e_{\phi}+u^3e_3$ and $u'=u^1e_1+u^2e_2=u^{\rho}e_{\rho}+u^{\phi}e_{\phi}$. For  $\delta>0$, define
\be\label{eq_Cone}
\Omega_{\delta}:=\{x\in \R^3\mid |x'|<\delta x_3, x_3>0\}=\{x\in \R^3\mid 0\le \rho< \delta, x_3>0\}.
\ee

We will show that, after possibly decreasing $\delta$,
every $(-1)$-homogeneous axisymmetric solution $(u, p)$  in $\Omega_{\delta}\setminus\{x'=0\}$ satisfying  $|u|=o(|x'|^{-1})$ as $x'\to 0$ admits the expansions
\begin{equation}\label{eq1_1}
    u= \sum_{n=0}^{\infty}u_n, \quad p= \sum_{n=0}^{\infty}p_n,\quad \textrm{ in }\Omega_{\delta}\setminus\{x'=0\}.
  \end{equation}
  Both series converge in $C^2(V)$ for every compact set  $V\subset\subset\Omega_{\delta}\setminus\{x'=0\}$.  The terms $\{(u_n, p_n)\}_{n=0}^{\infty}$ satisfy the following structural properties:

  \medskip

	\noindent\textbf{(A1)} \emph{Structure of expansion terms: Each $u_n$ is $(-1)$-homogeneous and each $p_n$ is $(-2)$-homogeneous. Moreover, $u_n=u_n^{\rho}e_{\rho}+u_n^{\phi}e_{\phi}+u_n^3e_3$, where for each $n\ge 0$,
	  \begin{equation*}
	  u^{\rho}_n=x_3^{-1}\rho^{2n+1}a^{\rho}_{n}(t), \ u^{\phi}_n=x_3^{-1}\rho^{2n+1}a^{\phi}_{n}(t), \ u^{3}_n=x_3^{-1}\rho^{2n}a^3_{n}(t), \ p_n=x_3^{-2}\rho^{2n}c_{n}(t),
  \end{equation*}
 in $ \Omega_{\delta}\setminus\{x'=0\}$,  where  $ c_{n}(t)$ and $a^{j}_{n}(t)$, $j=\rho, \phi, 3$, are polynomials of $t=\ln \rho$ satisfying
  \be\label{eq1_A2}
    \deg a_{n}^{j}\le n+1, \quad \deg c_n\le n+1.
  \ee
  }

\noindent \textbf{(A2)} \emph{Leading-order term:  $(u_0, p_0)$ is given by
  \begin{equation}\label{eq1_a_2}
  u_0^{\rho}=x_3^{-1}\rho(\frac{1}{2}c_0t+b_{0}^{\rho}), \quad u_0^{\phi}=b_{0}^{\phi}x_3^{-1}\rho, \quad u_0^3=x_3^{-1}(c_0t+b_{0}^3), \quad p_0=x_3^{-2}(c_0t+d_0),
  \end{equation}
  where $c_0, d_0, b_{0}^{\rho}, b_{0}^{\phi}$ are constants uniquely determined by $(u, p)$ through (\ref{eq1_a_3}) below,
  and $b_{0}^3=2b_{0}^{\rho}-\frac{1}{2}c_0$.
}

\medskip

\noindent \textbf{(A3)} \emph{Recursive relation:  For every $n\ge 0$, $(u_{n+1}, p_{n+1})$ is uniquely determined, within the class specified in \textbf{(A1)}, by  the lower-order terms through
   \begin{equation}\label{eq1_3}
   \left\{
    \begin{split}
        & -\Delta' p_{n+1}=\partial_3^2p_n+\sum_{m=0}^n\dive (u_m\cdot \nabla u_{n-m})=:f_n^p\\
         & \Delta' u'_{n+1}=-\partial_3^2u_n'+\sum_{m=0}^nu_m\cdot \nabla u_{n-m}'+\nabla_{x'} p_{n+1}=:f_n',\\
         & \Delta' u^3_{n+1}=-\partial^2_3u^3_n+\sum_{m=0}^nu_m\cdot \nabla u_{n-m}^3+\partial_3p_n=:f_n^3.
       \end{split}
    \right.
  \end{equation}
  }

  \medskip

\medskip

 \noindent The expansion (\ref{eq1_1}) may be viewed as a Frobenius expansion for the nonlinear system (\ref{NS}) near the singular ray along the positive  $x_3$-axis. The four constants $c_0, d_0, b_{0}^{\rho}, b_{0}^{\phi}$ in \textbf{(A2)} are uniquely determined by $(u, p)$ on $\{x_3=1\}$ through
  \begin{equation}\label{eq1_a_3}
   \begin{split}
   & c_0=\lim_{\rho\to 0^+}\rho\partial_{\rho}p, \ d_0=\lim_{\rho\to 0^+}(p-c_0\ln\rho),\  b_{0}^{\rho}= \lim_{\rho\to 0^+}\frac{1}{\rho}(u^{\rho}-\frac{1}{2}c_0\rho \ln\rho), \ b_{0}^{\phi}= \lim_{\rho\to 0^+}\frac{1}{\rho}u^{\phi}.
     \end{split}
  \end{equation}
 Denote $\lambda:=(c_0, d_0, b_0^{\rho}, b_0^{\phi})\in \R^4$.   The sequence $\{(u_n, p_n)\}_{n=0}^{\infty}$ is uniquely determined by $\lambda$ through \textbf{(A1)}--\textbf{(A3)}, and satisfies $\dive u_n=0$ for all $n\ge 0$.

\begin{thm}\label{thm_main}
  Let $K\subset \R^4$ be compact.    There exists $\delta>0$, depending only on $K$, such that for every $\lambda=(c_0, d_0, b_{0}^{\rho}, b_{0}^{\phi})\in K$, there exists a unique $(-1)$-homogeneous axisymmetric solution $(u_{\lambda}, p_{\lambda})\in C^{\infty}(\Omega_{\delta}\setminus\{x'=0\})$ of (\ref{NS})   such that %satisfying
   $|u_{\lambda}|=o(|x'|^{-1})$ as $x'\to 0$ for each fixed $x_3>0$ and
 (\ref{eq1_a_3}) holds with prescribed parameter $\lambda$. Moreover,
   \[
    u_{\lambda}=\sum_{n=0}^{\infty}u_n, \quad p_{\lambda}=\sum_{n=0}^{\infty}p_n,
  \]
   where $\{(u_n, p_n)\}_{n=0}^{\infty}$ is uniquely determined by $\lambda$ through \textbf{\textup{(A1)}}--\textbf{\textup{(A3)}}. For every compact set $V\subset\subset\Omega_{\delta}\setminus\{x'=0\}$, both series converge in $C^2(V)$ uniformly for $\lambda\in K$.

   Conversely, let $\tilde{\delta}>0$ and $(u, p)$
    be a $C^2$ $(-1)$-homogeneous axisymmetric solution of  (\ref{NS}) in $\Omega_{\tilde{\delta}}\setminus\{x'=0\}$ satisfying $|u|=o(|x'|^{-1})$ as $x'\to 0$ for each fixed $x_3>0$. Then the limits in (\ref{eq1_a_3}) exist and determine a unique $\lambda=(c_0, d_0, b_{0}^{\rho}, b_{0}^{\phi})\in\R^4$, and  there exists $0<\delta_1\le \tilde{\delta}$ such that  $(u, p)=(u_{\lambda}, p_{\lambda})$ in $\Omega_{\delta_1}\setminus\{x'=0\}$, where $(u_{\lambda}, p_{\lambda})$ is the solution given by the first part with $K=\{\lambda\}$.
      \end{thm}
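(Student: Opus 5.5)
Working on $\{x_3=1\}$, I will reduce everything to an ODE system in $\rho$ (equivalently in $t=\ln\rho$). By homogeneity and axisymmetry, $(u,p)$ is determined by the four functions $u^\rho(\rho),u^\phi(\rho),u^3(\rho),p(\rho)$ for $0<\rho<\delta$, and (\ref{NS}) becomes a second-order system in $\rho$. The operator $\Delta'$ acts as the Bessel-type operator $\partial_\rho^2+\rho^{-1}\partial_\rho$ on $p,u^3$ and as $\partial_\rho^2+\rho^{-1}\partial_\rho-\rho^{-2}$ on $u^\rho,u^\phi$. The terms involving $\partial_3$ become $x_3$-derivatives, which on homogeneous functions are combinations of $\rho\partial_\rho$ and multiplication by constants; they therefore carry an extra factor $\rho^2$ relative to $\Delta'$. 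This extra factor is why the expansion is in powers of $\rho^2$.

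\textbf{Existence.} First I will check that (A1)--(A3) are well posed. Acting on $\rho^{2n+2}q(t)$ with $q$ a polynomial, the scalar operator gives $\rho^{2n}\bigl((2n+2)^2q+2(2n+2)q'+q''\bigr)$. The coefficient $(2n+2)^2$ is nonzero for $n\ge0$, so each right-hand side $\rho^{2n}\cdot(\text{poly of degree } k)$ has a unique polynomial preimage of the same degree. For the vector part, $\rho^{2n+3}q$ gives the coefficient $(2n+3)^2-1\neq0$, again invertible. Only $n=0$ is resonant (kernels $1$, $t$, and $\rho$, $\rho^{-1}$), and this is exactly where the four free parameters of (A2) enter. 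The degrees increase by at most one per step because of the products of $t$-polynomials in the quadratic terms; this gives (\ref{eq1_A2}). A direct check shows that $\dive u_n=0$ propagates.

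For convergence, I will prove by induction majorant bounds
\[
|a_n^j|,\ |c_n|\le C M^n (1+|t|)^{n+1},
\]
together with bounds on the polynomial coefficients. The inverse of $q\mapsto(2n+2)^2q+\dots$ on polynomials of degree $\le n+1$ is controlled by a Neumann series in the nilpotent operator $d/dt$. Its norm is $O(n^{-2})$, which compensates for the loss of two $t$-derivatives in $\partial_3^2$ and for the convolution sums $\sum_m$. The condition $\rho^2 M(1+|\ln\rho|)<1$ then gives convergence in $C^2$ on compact subsets for $\rho<\delta(K)$, uniformly in $\lambda\in K$. Smoothness follows from elliptic regularity, and the limits (\ref{eq1_a_3}) are read off from $(u_0,p_0)$.

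\textbf{Converse.} Uniqueness in the first part follows from the converse, so the converse is the real work. Starting from $|u|=o(\rho^{-1})$, I will first use the ODE structure to obtain the asymptotics of each component. Integrating divergence-free gives $\rho u^\rho$ in terms of $u^3$. The swirl equation for $\rho u^\phi$ is a linear first-order/Bessel system with coefficients $o(\rho^{-1})$, so one can integrate it. The pressure equation, the radial momentum balance, gives $\rho\partial_\rho p\to c_0$. A bootstrap then upgrades $o(\rho^{-1})$ to logarithmic growth, producing the limits (\ref{eq1_a_3}); the earlier analysis of \cite{LLY1,LLY5} can be imitated here.

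Next I will show that $w=u-u_\lambda$, $q=p-p_\lambda$ vanishes near $\rho=0$. The difference satisfies a linearized system with coefficients of size $O(|\ln\rho|)$, and its data in the resonant modes are all zero because $\lambda$ matches. A weighted Grönwall estimate of the form $\sup_{\rho'<\rho}|w|/\rho'^{k}$, using the gain of $\rho^2$ from the $\partial_3$-terms against the $\rho$-integral representations of the inverse Bessel operators, shows that $w$ decays faster than any power. The same estimate then yields $w\equiv0$ for $\rho<\delta_1$.

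The main obstacle is this converse. The apriori decay from $o(|x'|^{-1})$ to logarithmic behavior, and the clean identification of exactly four free constants, is the point where swirl and nonlinearity couple. I would first try to handle it by integrating the $\phi$-equation explicitly, which reads $(\rho u^\phi)'$ transported by $u^\rho$, and then treating the no-swirl subsystem as in \cite{LLY1}.
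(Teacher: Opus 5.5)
Your outline has the paper's architecture: formal Frobenius series with polynomial-in-$t$ coefficients, convergence, extraction of the leading terms of an arbitrary solution, and identification. Your well-posedness computation matches Lemma \ref{lem3_2}, with operators $(\partial_t+2n+2)^2$ and $(\partial_t+2n+2)(\partial_t+2n+4)$ invertible on polynomials. Two steps are done differently.

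\emph{Convergence.} You put majorants on the polynomial coefficients and invert by a terminating Neumann series in $\partial_t$, whose norm on polynomials of degree $\le n+2$ (say in the $\ell^1$ norm of coefficients) is indeed $O(n^{-2})$. The paper instead uses pointwise bounds $n^kM_0^n\rho^{(2n+1)(1-\epsilon)-k}$, absorbs $|\ln\rho|$ into $\rho^{-\epsilon}$, and inverts $\Delta'$ through the integral formulas of Lemmas \ref{lem2_1}--\ref{lem2_2}. There the factor $(|\alpha|+1)^{k-2}$ plays the role of your $O(n^{-2})$. Your version is purely algebraic. The paper's is chosen because the same engine then handles the remainders $u-U_N$, which are not of polynomial type.

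\emph{Converse.} You estimate $w=u-u_\lambda$ directly, whereas Lemma \ref{lem4_M_2} proves by induction $|\nabla_{x'}^kh_N'|\le (N+1)^kK_0^{N+1}\rho^{(2N+3)(1-\epsilon)-k}$ for $h_N=u-U_N$ and lets $N\to\infty$. Since the problem for $w$ is linear, a single fixed weight suffices. With weight $\rho^{\beta+1}$ on $w'$ and $\rho^{\beta}$ on $w^3,q$ over $(0,\rho_0)$, you get a contraction factor $C\rho_0^{2-\epsilon}$, so $w\equiv 0$. You never need constants uniform in the weight, which is a real simplification.

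Several points need repair.

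(i) An induction that treats $c_n$ like $a_n$ does not close. The source $\dive W_n$ costs three factors of $n$ (the sum over $m$ and two derivatives) against the $O(n^{-2})$ gain. The pressure coefficients must therefore carry $nM^n$, just as the paper carries $n^{k+1}$ for $p_n$.

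(ii) Decay faster than every power does not give $w\equiv0$ (consider $e^{-1/\rho}$); you need the fixed-weight contraction above.
\begin{itemize}
\item Its norm must control $\rho\partial_\rho w$ and $\rho^2\partial_\rho^2 w$, or you must integrate by parts in the Volterra formulas.
\item The pressure difference must first be improved via $-\Delta' q=\partial_3^2q+\dive(u\cdot\nabla w+w\cdot\nabla u_\lambda)$ before it enters the $w'$-equation. Otherwise $\nabla_{x'}q$ is of the same order as $\Delta'w'$ and no $\rho^2$ is gained. This is why the paper solves for $r_{N+1}$ before $h'_{N+1}$.
\end{itemize}

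(iii) The contraction needs a finite starting norm: $u'-u_0'=O(\rho^{3-\epsilon})$ and $u^3-u_0^3,\,p-p_0=O(\rho^{2-\epsilon})$, each with two derivatives. This is exactly Lemma \ref{lem3_1}, and your sketch of it is thin. The paper proceeds as follows.
\begin{itemize}
\item It takes the logarithmic bound from Theorem 1.3 of \cite{LLY1}.
\item It gets derivative bounds near the axis from Lemma \ref{lemL_1}.
\item It bounds $p$ by $C(|\ln\rho|^3+1)$ via $\frac{d}{d\theta}(\frac12(u^\theta)^2-u^R+p)=\cot\theta\,(u^\phi)^2$.
\item It peels off harmonic parts in the order $u'$, $u^3$, $p$, $u'$, with $\dive u=0$ forcing $a_0^3=c_0$ and $b_0^3=2b_0^\rho-\frac12c_0$.
\end{itemize}

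(iv) Uniqueness on all of $\Omega_\delta$ also needs ODE uniqueness at the ordinary points $\rho\in(0,\delta)$, to extend equality from $\Omega_{\delta_1}$.
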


%It follows from (\ref{eq1_a_2}) that 
The solution $(u_{\lambda}, p_{\lambda})$ constructed in Theorem \ref{thm_main} is of Type II if and only if $c_0\ne 0$, and has nonzero swirl if and only if $b_0^{\phi}\ne 0$. Theorem \ref{thm_main}  provides a complete classification of all local Type II $(-1)$-homogeneous axisymmetric solutions of (\ref{NS}) near the singular ray $\{x'=0, x_3>0\}$. To the best of our knowledge, this is the first complete classification  of Type II solutions of (\ref{NS}) within a class that allows nonzero swirl.

To prove Theorem \ref{thm_main}, we reformulate the system (\ref{NS}) for $(-1)$-homogeneous axisymmetric solutions on $\{x_3=1, x'\ne 0\}$ in terms of the scale-invariant variable $\rho$.  
Starting from the leading logarithmic terms, we recursively construct a formal series solution with the structure described in \textbf{\textup{(A1)}}--\textbf{\textup{(A3)}}. %whose general terms involve products of powers of $\rho^2$ and  $\ln\rho$. 
The key step is to prove the convergence of this series, for which we establish quantitative  derivative estimates for its individual terms  using carefully chosen majorants. 
%The existence of solutions is proved by recursively constructing Frobenius-type series described by \textbf{(A1)}-\textbf{(A3)} in powers of $\rho^2$, whose coefficients are polynomials in $\ln\rho$. 
%The convergence of the series solutions is established  suitable majorants. 
Conversely, for any  $(-1)$-homogeneous axisymmetric solution $(u,p)$ near the positive $x_3$-axis %in $\Omega_{\delta}\setminus\{x'=0\}$ for some $\delta>0$ 
satisfying $|u|=o(|x'|^{-1})$, we successively identify its expansion terms and derive precise estimates for the remainders, showing that $(u, p)$ coincides with one of the constructed solutions near the singular ray.   This yields the uniqueness and classification of such solutions. %The uniqueness of solutions follows from estimates of remainder terms in the expansion of any $(-1)$-homogeneity axisymmetry solution. 

We also establish the derivative estimates for the solutions obtained in Theorem \ref{thm_main}.
Let $\mathbb{N}$ denote the set of nonnegative integers.
 For $k, l\in\mathbb{N}$ and a multi-index $\beta\in\mathbb{N}^4$, let $\nabla_{x'}^k\partial_3^l\partial_{\lambda}^{\beta}f$  denote a mixed derivative of order $k$ in $x'$, order $l$ in $x_3$, and order $|\beta|$ in the parameter $\lambda$.

  \begin{prop}\label{prop1_1}
   Let $K\subset \R^4$ be compact and let $\delta=\delta(K)>0$ be as in Theorem \ref{thm_main}.   For each $\lambda=(c_0, d_0, b_{0}^{\rho}, b_{0}^{\phi})\in K$, let $(u_{\lambda}, p_{\lambda})$ be the corresponding $(-1)$-homogeneous axisymmetric solution of (\ref{NS}) in $\Omega_{\delta}\setminus\{x'=0\}$ given by Theorem \ref{thm_main}. Then $(u_{\lambda}, p_{\lambda})$ depends smoothly on $\lambda$.    Moreover,  for every integer $L\ge 0$, there exists a constant $C>0$, depending only on $K$ and $L$,
   such that for every multi-index $\beta\in \mathbb{N}^4$ and  $k, l\in \mathbb{N}$ satisfying $|\beta|+k+l\le L$, and every $x\in \Omega_{\delta}\setminus\{x'=0\}$,
      \be\label{eqthm1_2}
       \begin{split}
       &  |\nabla_{x'}^k\partial_3^l\partial_{\lambda}^{\beta}u'_{\lambda}|
         \le \left\{
         \begin{array}{ll}
          Cx_3^{-k-l-1}\rho^{-k+1}(|\ln\rho|+1), &  \textrm{if }k=0\textrm{ or } k=1,\\
          Cx_3^{-k-l-1}\rho^{-k+1}, & \textrm{if }k\ge 2.
         \end{array}
         \right.\\
       &   |\nabla_{x'}^k\partial_3^l\partial_{\lambda}^{\beta}u_{\lambda}^3|
         \le \left\{
         \begin{array}{ll}
          Cx_3^{-k-l-1}(|\ln\rho|+1), & \textrm{if }k=0, \\
          Cx_3^{-k-l-1}\rho^{-k}, & \textrm{if }k\ge 1.
         \end{array}
         \right.\\
       &  |\nabla_{x'}^k\partial_3^l\partial_{\lambda}^{\beta}p_{\lambda}|
         \le \left\{
         \begin{array}{ll}
          Cx_3^{-k-l-2}(|\ln\rho|+1), &  \textrm{if }k=0, \\
          Cx_3^{-k-l-2}\rho^{-k}, &  \textrm{if }k\ge 1.
         \end{array}
         \right.
         \end{split}
              \ee\end{prop}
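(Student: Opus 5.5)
Our plan is to derive everything from the series representation of Theorem \ref{thm_main} and the structure \textbf{(A1)}--\textbf{(A3)}. By homogeneity it suffices to work on the slice $\{x_3=1\}$. There $u_\lambda$ and $p_\lambda$ are functions of $\rho$ alone, in the sense of the profiles $a_n^j(t),c_n(t)$. A derivative $\nabla_{x'}^k\partial_3^l$ of a $(-1)$-homogeneous field is $(-1-k-l)$-homogeneous, which produces the factor $x_3^{-k-l-1}$ (or $x_3^{-k-l-2}$ for $p$). Moreover, $\partial_3$ acting on $x_3^{-1}\rho^{m}a(t)$ gives $x_3^{-2}\rho^m(-(1+m)a-a')$, since $\partial_3\rho=-\rho/x_3$. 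Thus $\partial_3$ preserves the form $\rho^{m}\times$(polynomial in $t$ of the same degree). Similarly, $\nabla_{x'}$ acting on $\rho^m a(t)$ (times $e_\rho$, $e_\phi$ or a scalar) costs exactly one power of $\rho$ and preserves polynomiality in $t$.

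\textbf{Step 1: the leading term.} Differentiate $(u_0,p_0)$ from (\ref{eq1_a_2}) explicitly.
\begin{itemize}
\item $u_0'=x_3^{-1}\rho(\tfrac12c_0t+b_0^\rho)e_\rho+b_0^\phi x_3^{-1}\rho e_\phi$, which in Cartesian form is $x_3^{-2}x'(\tfrac12 c_0\ln(|x'|/x_3)+b_0^\rho)+\dots$. One $x'$-derivative leaves a $\ln\rho$ term. From the second derivative on, the log is killed: $\nabla^2_{x'}(x'\ln|x'|)\sim 1/|x'|$. This gives exactly the $k\ge2$ bound $\rho^{-k+1}$ without a log.
\item $u_0^3$ and $p_0$ are $c_0\ln\rho+$const times a power of $x_3$. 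For $k\ge1$ their $x'$-derivatives are $\sim\rho^{-k}$, with no log.
\item Mixed $\partial_3^l$ derivatives only reproduce these forms.
\item $\partial_\lambda^\beta$ is harmless, since the dependence on $\lambda$ is linear.
\end{itemize}
So the claimed estimates hold for $n=0$, uniformly in $\lambda\in K$.

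\textbf{Step 2: higher terms.} For $n\ge1$, each $u_n'$ carries $\rho^{2n+1}\cdot$poly$_{n+1}(t)$, and $u_n^3,p_n$ carry $\rho^{2n}\cdot$poly$_{n+1}(t)$. Each derivative produces at most a factor $\rho^{-1}$ times a polynomial of the same degree, with coefficients growing at most like $(2n+1+\deg)$. On $\rho<\delta$ we use $\rho^{2n}|t|^{j}\le \rho\,C^n n!\dots$-type bounds; more precisely, $\rho^{2n-1}|\ln\rho|^{n+1}\le C(\delta')^{2n-2}$ for $\rho<\delta'<\delta$. Thus the tail $\sum_{n\ge1}$ is bounded by a term with one extra power of $\rho$ relative to the $n=0$ bound, which is better than claimed. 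To make this quantitative we need bounds on the coefficients of $a_n^j,c_n$ and their $\lambda$-derivatives of the form $|\text{coeff}|\le M A^n$. These come from the majorant estimates used in proving convergence in Theorem \ref{thm_main}, extended to $\partial_\lambda^\beta$. Differentiating the recursion (\ref{eq1_3}) in $\lambda$ gives a linear recursion of the same type with lower-order $\partial_\lambda$-derivatives as sources, and induction on $|\beta|$ yields the same geometric bounds. Termwise differentiation is then justified by the uniform convergence of the differentiated series. This also gives smooth dependence on $\lambda$: every $\partial_\lambda^\beta$ series converges locally uniformly in $C^L$.

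\textbf{Main obstacle.} The delicate point is uniformity up to $\rho=\delta$ itself rather than $\rho<\delta'$. We handle it by running the majorant argument with a slightly larger radius: shrink $\delta(K)$ in Theorem \ref{thm_main} at the outset so that the series converges geometrically on $\Omega_{2\delta}$. Then on $\Omega_\delta$ the ratio $(\rho/2\delta)^{2n}$ absorbs both the polynomial growth in $n$ from the $k+l$ derivatives and the $|\ln\rho|^{n+1}$ factors. We use $\rho^{\epsilon}|\ln\rho|^{j}\le C_j\epsilon^{-j}$, choosing $\epsilon$ per term so that $j!$-growth is balanced against $A^n$. Getting these combinatorial bounds, including the $\lambda$-derivatives of the recursively defined polynomials, is the main technical work.
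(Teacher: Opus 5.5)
Near the axis your argument is the paper's. The explicit leading term $(u_0,p_0)$, plus a tail majorant obtained by differentiating the recursion~\eqref{eq1_3} in $\lambda$, is exactly the Claim in the proof of Lemma~\ref{lem4_M_1}. Both then finish with homogeneity and the conversion between $\partial_\rho$ and Cartesian derivatives.

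You genuinely differ in how you handle your ``main obstacle''.

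\emph{The paper's route.} The paper never makes the majorant uniform in $L$. Its ratio $M_0$ depends on $L$, through the constants in Lemmas~\ref{lem2_1}--\ref{lem2_2}, the Leibniz sums and the threshold $N\ge L$. So the differentiated series are only shown to converge on $\Gamma_{\delta'}$ with $\delta'=\delta'(L,K)$. On the remaining range $\delta'/2\le\rho<\delta$, where $\rho$ stays away from $0$, the paper uses three facts: \eqref{eqNC_C} is a regular ODE system there; the Cauchy data at $\rho=\delta'/2$ are bounded uniformly for $\lambda\in K$; and solutions depend smoothly on the data. This step is soft, and it keeps the $\delta(K)$ already fixed by the $C^2$ convergence.

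\emph{Your route.} You let the series do everything. This avoids the ODE step, but the load-bearing input becomes a geometric ratio independent of $L$ and $\beta$, which the paper's Claim does not give verbatim. It can be obtained as follows. The $L=2$ majorant bounds $|a_n^j(t)|$ and $|c_n(t)|$ by $nM_0^n e^{(2n+1)\epsilon|t|}$. Since the degrees are $\le n+1$, a Chebyshev-type coefficient bound on a unit $t$-interval then gives $|\mathrm{coeff}|\le MA^n$. The $\lambda$-derivatives follow from Markov's inequality on a box containing $K$, because the coefficients are polynomials in $\lambda$ of degree $\le n+1$. Finally, explicit differentiation of $\rho^m a(\ln\rho)$ costs only polynomial factors in $n$.

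Two details need correcting.
\begin{enumerate}
\item The bound $\rho^{2n-1}|\ln\rho|^{n+1}\le C\delta'^{2n-2}$ is false as written; the correct bound is $C(\delta'^2|\ln\delta'|)^{n-1}$.
\item For the same reason, $(\rho/2\delta)^{2n}$ alone does not absorb $|\ln\rho|^{n+1}$ when $|\ln\delta|$ is large. The convergence ratio should be $A\rho^2(1+|\ln\rho|)$. This is increasing on $(0,1)$, so it is below $1/2$ on $(0,\delta]$ once it is below $1$ at $2\delta<1$.
\end{enumerate}
With these corrections no factorial balancing is needed. Shrinking $\delta(K)$ once is legitimate, because Theorem~\ref{thm_main} only asserts that some $\delta$ exists.
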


The following proposition identifies the singular force generated by the local Type II solutions of (\ref{NS}) constructed in Theorem \ref{thm_main} in the sense of distributions. This describes how these solutions satisfy (\ref{NS})  across the singular ray.
Landau \cite{Landau} interpreted his solutions as jets discharged from the origin and driven by a force represented by a Dirac mass (see, e.g., \cite{CK04}).
In contrast, solutions with singular rays, such as those constructed in \cite{LLY1, LLY2, LLY3} and in the present paper, generate forces supported along those rays.
In \cite{LY}, it was shown that global Type II $(-1)$-homogeneous axisymmetric solutions of (\ref{NS}) on $\bS^2\setminus\{S, N\}$ generate a point force at the origin together with a line force supported on the $x_3$-axis. Although the local Type II solutions constructed here may have nonzero swirl, their leading singular behavior near the singular ray agrees with that of the global Type II solutions, and they generate the same type of  line forces.

\begin{prop}\label{propF}
  Let $\lambda=(c_0, d_0, b_0^{\rho}, b_0^{\phi})\in \R^4$, and  let $(u_{\lambda}, p_{\lambda})$ be the  $(-1)$-homogeneous axisymmetric solution of (\ref{NS}) given by Theorem \ref{thm_main} in $\Omega_{\delta}\setminus\{x'=0\}$ for some $\delta>0$.
  Then,
  \be\label{eqF_1}
     \left\{
       \begin{split}
         & -\Delta u_{\lambda}+u_{\lambda}\cdot \nabla u_{\lambda}+\nabla p_{\lambda}=-2\pi c_0\partial_{x_3}(\ln |x_3|\delta_{x'=0})e_3, \\
         & \dive u_{\lambda}=0,
       \end{split}
     \right.
  \ee
  in the sense of distributions in $\Omega_{\delta}$.
\end{prop}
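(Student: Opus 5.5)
Fix a test vector field $\varphi\in C_c^\infty(\Omega_\delta;\R^3)$ and a scalar test function $\psi\in C_c^\infty(\Omega_\delta)$. I will cut out a thin cylinder around the singular ray and let its radius shrink to zero. Set $D_\epsilon=\{x\in\Omega_\delta: |x'|>\epsilon\}$. On $D_\epsilon$ the solution is smooth, so integrating by parts gives
\[
\int_{D_\epsilon}\big(\nabla u\!:\!\nabla\varphi+(u\cdot\nabla u)\cdot\varphi-p\,\dive\varphi\big)\,dx
=\int_{\{|x'|=\epsilon\}}\big(-\partial_\nu u\cdot\varphi+p\,\varphi\cdot\nu\big)\,dS .
\]
Here $\nu=e_\rho$ is the unit normal pointing out of the removed cylinder into $D_\epsilon$, with signs chosen so that the left side is the weak form. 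The proof then has three steps.

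\emph{Step 1: the bulk terms converge.} By Proposition \ref{prop1_1} with $\beta=0$, on the support of $\varphi$ (where $x_3$ is bounded above and below) we have $|u|\lesssim |\ln\rho|+1$, $|\nabla u|\lesssim\rho^{-1}$ and $|p|\lesssim |\ln\rho|+1$. Each of these is in $L^1_{loc}$ near $\{x'=0\}$, and so is $|u||\nabla u|\lesssim \rho^{-1}(|\ln\rho|+1)$, since $\int_0 r^{-1}|\ln r|\,r\,dr<\infty$. So the left side converges to the distributional pairing over $\Omega_\delta$. For the same reason, $\dive u=0$ in the weak sense: $\int_{D_\epsilon}u\cdot\nabla\psi=-\int_{|x'|=\epsilon}\psi\,u\cdot e_\rho\,dS$, and $|u^\rho|\lesssim \epsilon|\ln\epsilon|$ while the circle has length $2\pi\epsilon$, so the boundary term tends to zero.

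\emph{Step 2: identify the boundary limit.} I use (A1)–(A2) together with the bound on the tail $\sum_{n\ge1}$ implicit in Theorem \ref{thm_main} and Proposition \ref{prop1_1}. The tail and its $\nabla_{x'}$-derivative are $O(\rho|\ln\rho|^2)$, so they contribute nothing in the limit. Only $(u_0,p_0)$ matters.
\begin{itemize}
\item The pressure term satisfies $|p_0|\epsilon\sim\epsilon|\ln\epsilon|\to0$.
\item The terms $\partial_\rho u_0^\rho$ and $\partial_\rho u_0^\phi$ are $O(|\ln\epsilon|)$, so they also vanish after multiplying by the length $2\pi\epsilon$.
\item The only surviving term is $\partial_r u_0^3=x_3^{-1}c_0\,\partial_r\ln(r/x_3)=c_0/r$. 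This gives
\[
-\int_{|x'|=\epsilon}\partial_r u^3\,\varphi_3\,dS\to -2\pi c_0\int \varphi_3(0,0,x_3)\,dx_3 .
\]
\end{itemize}

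\emph{Step 3: reconcile with the claimed form.} The argument above suggests the line force is $-2\pi c_0\,\delta_{x'=0}e_3$ with a constant density, which does not match \eqref{eqF_1}. I need to recheck the $x_3$-dependence. In homogeneous variables $u_0^3=x_3^{-1}(c_0\ln r-c_0\ln x_3+b_0^3)$, so $\partial_r u_0^3=c_0/(r x_3)$. The boundary term is therefore
\[
-2\pi c_0\int x_3^{-1}\varphi_3(0,x_3)\,dx_3 .
\]
Also $-\partial_{x_3}(\ln x_3\,\delta)$ paired with $\varphi_3$ gives $\int \ln x_3\,\partial_3\varphi_3\,dx_3=-\int x_3^{-1}\varphi_3\,dx_3$. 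Since $x_3>0$ in $\Omega_\delta$, the two agree up to the overall sign, and the sign is fixed by carefully tracking the orientation of $\nu$ (the pairing is $\langle -\Delta u,\varphi\rangle=\int\nabla u:\nabla\varphi$). This sign bookkeeping is the main obstacle. The other delicate point is justifying the tail bound $\nabla_{x'}(u-u_0)=O(|\ln\rho|^2)$ uniformly in $x_3$ on compact $x_3$-ranges. I would get it from the termwise majorant estimates used to prove convergence in Theorem \ref{thm_main}. Alternatively, Proposition \ref{prop1_1} applied to $u-u_0$ would do, if its proof gives the extra factor of $\rho$ for the tail.
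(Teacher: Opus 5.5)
Your argument is correct and is essentially the paper's proof. Both excise $\{|x'|<\epsilon\}$, integrate by parts, and observe that the only boundary contribution surviving $\epsilon\to0$ is $-\int_{|x'|=\epsilon}\partial_r u^3\,\varphi_3\,dS$ with $\partial_r u^3=c_0/(x_3r)+o(r^{-1})$ uniformly for $x_3$ in compacts; then both integrate by parts in $x_3$. The tail bound comes, as you suggest and as the paper does, from the termwise Claim in the proof of Lemma \ref{lem4_M_1}. The only differences are bookkeeping: you use $\nabla u:\nabla\varphi$ instead of the symmetric stress tensor $T_{ij}$, and you dispose of the $e_1,e_2$ components via the $O(|\ln\rho|)$ bounds on $\nabla_{x'}u'$ and $p$ rather than the paper's $\cos\phi,\sin\phi$ cancellation.

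The sign you flag as the main obstacle is not one. With $\nu=e_\rho$, your identity (keeping the factor $x_3^{-1}$, as in your Step 3) gives exactly $-2\pi c_0\int x_3^{-1}\varphi_3(0,0,x_3)\,dx_3=2\pi c_0\int\ln x_3\,\partial_3\varphi_3(0,0,x_3)\,dx_3$. This is precisely the right-hand side of \eqref{eqF_2}, not something that agrees only up to sign. To match \eqref{eqF_2} you also need $\int(u\cdot\nabla u)\cdot\varphi=-\int u^iu^j\partial_i\varphi_j$. This follows from the same excision, since the extra boundary term is $O(\epsilon^2|\ln\epsilon|^2)$.
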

The parameter $c_0$ %, which is the coefficient of the leading logarithmic terms of $(u_{\lambda}, p_{\lambda})$, also 
determines the strength of the singular line force.  In Proposition \ref{propF}, $\delta_{x'=0}$ denotes the Dirac measure in the $x'$-variable, supported on the positive $x_3$-axis. More precisely, (\ref{eqF_1}) means that for
 any $\varphi\in C_c^{\infty}(\Omega_{\delta})$ and $j=1, 2, 3$,
    \begin{equation}\label{eqF_2}
           \int_{\Omega_{\delta}} (\nabla u^j_{\lambda} \cdot \nabla \varphi-u^i_{\lambda} u^j_{\lambda} \partial_i\varphi-p_{\lambda}\partial_j\varphi)dx= 2\pi c_0\delta_{j3}\int_{0}^{\infty}\ln|x_3|\partial_3\varphi(0,0,x_3)dx_3,          \end{equation}
         and
         \begin{equation*}
            \int_{\Omega_{\delta}}u_{\lambda} \cdot\nabla \varphi dx=0.
         \end{equation*}
Note that only the line force appears here, and no point force arises since the test function satisfies $\supp \varphi\subset\subset \Omega_{\delta}$.
The local Type II solutions may be viewed as stationary flows driven by a singular line force concentrated along the positive $x_3$-axis.

\begin{rmk}
  We conclude with a remark on the asymptotic stability of the Type II solutions constructed in Theorem \ref{thm_main}. Karch and Pilarczyk \cite{Karch} proved the asymptotic stability of small Landau solutions under $L^2$-perturbations. In \cite{LY}, the asymptotic stability of small global Type II $(-1)$-homogeneous axisymmetric no-swirl solutions of (\ref{NS}) was established under $L^2$-perturbations.
   In view of Proposition \ref{prop1_1} and Proposition \ref{propF},
  the arguments in \cite{Karch,LY} can be adapted to show that
  small local Type II solutions $(u_{\lambda}, p_{\lambda})$ are also asymptotically stable under $L^2$-perturbations.
  This conclusion also follows from the recent asymptotic stability result  in \cite{BD26} for a broader class of small solutions to (\ref{NS}) on general domains. We omit the details here.
\end{rmk}

The paper is organized as follows. In Section \ref{sec_2}, we reformulate the system (\ref{NS}) for $(-1)$-homogeneous axisymmetric solutions on $\{x_3=1\}$. In Section \ref{sec_3}, we construct and classify local  $(-1)$-homogeneous axisymmetric solutions of (\ref{NS}) near the positive $x_3$-axis satisfying $|u|=o(|x'|^{-1})$ as $x'\to 0$ for each fixed $x_3>0$, and prove Theorem \ref{thm_main} and Proposition \ref{prop1_1}. In Section \ref{sec_F}, we compute the singular force generated by these solutions and prove Proposition \ref{propF}.

\section{Preliminaries}\label{sec_2}

Let $\delta>0$ be fixed. We study $(-1)$-homogeneous axisymmetric  solutions $(u, p)$ of (\ref{NS}) in the cone $\Omega_{\delta}\setminus\{x'=0\}$ defined by (\ref{eq_Cone}). We first reformulate the system (\ref{NS}) for such solutions. By homogeneity, for $x\in \Omega_{\delta}\setminus\{x'=0\}$, we have
\[
  u(x)=\frac{1}{x_3}u(\frac{x'}{x_3}, 1), \quad p(x)=\frac{1}{x_3^2}p(\frac{x'}{x_3}, 1).
  \]
Hence it suffices to study the system (\ref{NS}) on the cross section $\{0<|x'|<\delta, x_3=1\}$. Recall that
\[
  r=|x'|, \quad \rho=\frac{r}{x_3}, \quad t=\ln \rho.
\]
Let $\alpha\in\R$  and let $f$ be a $(-\alpha)$-homogeneous scalar function. Then
\be\label{eqE_0_0}
   f(x)=\frac{1}{x_3^{\alpha}}f(\frac{x'}{x_3}, 1)=\frac{1}{x_3^{\alpha}}f(\rho, \phi), \quad x_3>0.
\ee
Consequently,
\begin{equation}\label{eqE_0_1}
\begin{split}
    &   \partial_r f(x)=\frac{1}{x_3^{{\alpha}+1}}\partial_{\rho}f(\rho, \phi), \quad
   \partial_3f(x)    =-\frac{1}{x_3^{{\alpha}+1}\rho^{{\alpha}-1}}\partial_{\rho}(\rho^{\alpha}f), \quad
  \partial_3^2f(x)=\frac{1}{x_3^{{\alpha}+2}\rho^{\alpha}}\partial_{\rho}(\rho^2\partial_{\rho}(\rho^{\alpha}f)).
   \end{split}
\end{equation}
If $f$ is axisymmetric, then
\begin{equation}\label{eqE_0_2}
  \nabla f=\frac{1}{x_3^{{\alpha}+1}}(\partial_{\rho}fe_{\rho}-\frac{1}{\rho^{{\alpha}-1}}\partial_{\rho}(\rho^{\alpha} f)e_3),
\end{equation}
\begin{equation}\label{eqE_0_3}
  \Delta f=\frac{1}{x_3^{{\alpha}+2}}(\frac{1}{\rho}\partial_{\rho}(\rho\partial_{\rho}f)+\frac{1}{\rho^{\alpha}}\partial_{\rho}(\rho^2\partial_{\rho}(\rho^{\alpha} f))), \quad  \Delta' f=\frac{1}{x_3^{{\alpha}+2}\rho}\partial_{\rho}(\rho\partial_{\rho}f).  \end{equation}
By a slight abuse of terminology, throughout this paper, a $(-\alpha)$-homogeneous function on $D\times\{x_3=1\}$ for a set $D\subset \R^2$ will be identified with its  homogeneous extension to the cone $\{x\in \mathbb{R}^3\mid x'/x_3\in D, x_3>0\}$ given by (\ref{eqE_0_0}).

By (\ref{eqE_0_1}) and the relation between $\nabla_{x'}$ and $\partial_{\rho}$ for axisymmetric functions and vector fields, where for vector fields the derivatives of the frame vectors
$e_{\rho}, e_{\phi}$ are included, we have the following estimate: for any $(-\alpha)$-homogeneous axisymmetric function or vector field $f$, and any $k, l\in\mathbb{N}$,  there exists a constant $C(k, l, \alpha)>0$, such that
\be\label{eqE_0_4}
|\nabla_{x'}^k\partial_3^lf(\rho, 1)|\le C(k, l, \alpha)\sum_{i=0}^{k+l}\rho^{-k+i}|\partial_{\rho}^if(\rho, 1)|, \quad \rho>0.
\ee

Let $u, v$ be $(-1)$-homogeneous axisymmetric vector fields.
 On $\{x_3=1\}$, the scalar Laplacian of each cylindrical component $u^j$, $j=\rho, \phi, 3$ is given by
\begin{equation}\label{eqE_1}
  \Delta u^j=\frac{1}{\rho}\partial_{\rho}(\rho\partial_{\rho}u^j)+\frac{1}{\rho}\partial_{\rho}(\rho^2\partial_{\rho}(\rho u^j)).
\end{equation}
We also have, on $\{x_3=1\}$, that
\begin{equation}\label{eqE_6}
\begin{split}
 u\cdot \nabla v & =(u^{\rho}\partial_{\rho}v^{\rho}-u^3\partial_{\rho}(\rho v^{\rho})-\frac{u^{\phi}v^{\phi}}{\rho})e_{\rho}+(u^{\rho}\partial_{\rho}v^{\phi}-u^3\partial_{\rho}(\rho v^{\phi})+\frac{u^{\phi}v^{\rho}}{\rho})e_{\phi}\\
    & +(u^{\rho}\partial_{\rho}v^3-u^3\partial_{\rho}(\rho v^3))e_3,
   \end{split}
\end{equation}
and
\begin{equation}\label{eqE_7}
\dive u=\frac{1}{\rho}\partial_{\rho}(\rho u^{\rho})-\partial_{\rho}(\rho u^3).
\end{equation}
Note $p$ is $(-2)$-homogeneous and axisymmetric, using (\ref{eqE_0_2}) with $\alpha=2$, we have
\[
  \nabla p=\partial_{\rho}p e_{\rho}-\frac{1}{\rho}\partial_{\rho}(\rho^2 p) e_3 \quad \textrm{ on }\{x_3=1\}.
\]
On $\{x_3=1\}$, the system  (\ref{NS}) for $(-1)$-homogeneous axisymmetric solutions   can be written in $(\rho, \phi, x_3)$ as
\begin{equation}\label{eqNC_C}
   \left\{
   \begin{split}
     & -(\Delta u^{\rho}-\frac{u^{\rho}}{\rho^2})+u^{\rho}\partial_{\rho}u^{\rho}-u^3\partial_{\rho}(\rho u^{\rho})-\frac{(u^{\phi})^2}{\rho}+\partial_{\rho}p=0,\\
      & -(\Delta u^{\phi}-\frac{u^{\phi}}{\rho^2})+u^{\rho}\partial_{\rho}u^{\phi}-u^3\partial_{\rho}(\rho u^{\phi})+\frac{u^{\phi}u^{\rho}}{\rho}=0,\\
      & -\Delta u^3+u^{\rho}\partial_{\rho}u^3-u^3\partial_{\rho}(\rho u^3)-\frac{1}{\rho}\partial_{\rho}(\rho^2p)=0,\\
      &
      \frac{1}{\rho}\partial_{\rho}(\rho u^{\rho})-\partial_{\rho}(\rho u^3)=0,
   \end{split}
   \right.
\end{equation}
where $\Delta u^j$ is given by (\ref{eqE_1}), $j=\rho, \phi, 3$.
 The relations between $(u^1, u^2)$ and $(u^{\rho}, u^{\phi})$ satisfy
 \begin{equation*}
 u^{\rho}=u_1\cos\phi+u_2\sin\phi, \quad u^{\phi}=-u_1\sin\phi+u_2\cos\phi.
	\end{equation*}
	So the estimates for the $\rho$-derivatives of $u^{\rho}, u^{\phi}$ imply the corresponding Cartesian $\nabla_{x'}$-estimates for $u'$, up to constants depending only on the order of differentiation.
	For simplicity, we  sometimes combine the first two equations in (\ref{eqNC_C}) as
\[
   -\Delta u'+u\cdot \nabla u'+\nabla_{x'}p=0, 
\]
where $\Delta u'$ denotes the planar vector part of the $3$-dimensional vector Laplacian. We will use the notation $\Delta'u'$ to denote $2$-dimensional vector Laplacian of $u'$.

\bigskip

 Next, we collect several auxiliary estimates used in the proof of Theorem \ref{thm_main}.
 The first is a regularity estimate for solutions of (\ref{NS}) with at most logarithmic growth near the $x_3$-axis. The proof is the same as that of Lemma 2.1 in \cite{LLY5}, with $o(\cdot)$ there replaced by $O(\cdot)$.
\begin{lem}\label{lemL_1}
	Let $R>0$ and $a<b$ satisfying $ab>0$,  and set $\Omega_{a, b, R}:=\{x\in\R^3\mid |x'|<R, a<x_3<b\}$.
	 Let $(u,p)\in C^{2}(\Omega_{a, b, R}\setminus\{x'=0\})$ be a solution of (\ref{NS}) satisfying
	\[
	  |u|\le C_0(|\ln |x'||+1), \quad x\in\Omega_{a, b, R}\setminus\{x'=0\},
	\]
	for some constant $C_0>0$.
	Then $(u, p)$ is smooth in $\Omega_{a, b, R}\setminus\{x'=0\}$. Moreover,  for any $a<a'<b'<b$ and $0<R'<R$, there exists $c_p\in \R$, which may depend on $p$, such that for every integer $k\ge 0$,
		\begin{equation*}
	|\nabla^ku|\le C\frac{|\ln |x'||+1}{|x'|^k},  \quad|\nabla^{k} (p-c_p)|\le C\frac{|\ln |x'||+1}{|x'|^{k+1}},
	\end{equation*}
	for all $x\in \Omega_{a', b', R'}\setminus\{x'=0\}$, where $C>0$ depends only on $R, R', a, b, a', b', k$ and $C_0$.
\end{lem}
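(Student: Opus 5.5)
The plan is to follow the proof of Lemma 2.1 in \cite{LLY5}. The strategy is to rescale around each point of $\Omega_{a',b',R'}\setminus\{x'=0\}$ to unit size, apply interior regularity for Navier--Stokes there, and scale back.

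\emph{Rescaling.} Fix $x_0=(x_0',x_{0,3})\in\Omega_{a',b',R'}\setminus\{x'=0\}$ and set $d=|x_0'|/2$. After shrinking the constant in $d$ depending on $a-a'$, $b'-b$ and $R-R'$, the cylinder $Q_d(x_0)=\{|x'-x_0'|<d,\ |x_3-x_{0,3}|<d\}$ lies in $\Omega_{a,b,R}$. On $Q_d(x_0)$ we have $|x'|\sim |x_0'|$. For $|x_0'|$ bounded below, standard interior estimates suffice, so assume $|x_0'|$ is small; then $|u|\le C(|\ln|x_0'||+1)=:M$ on $Q_d(x_0)$. Define
\[
v(y)=d\,u(x_0+dy),\qquad q(y)=d^2 p(x_0+dy)
\]
on the unit cylinder $Q_1$. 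The pair $(v,q)$ solves (\ref{NS}) and satisfies $|v|\le dM\le C$, since $d|\ln d|$ is bounded.

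\emph{Interior estimates.} For a bounded velocity on $Q_1$, the pressure is controlled up to a constant, for example by taking the divergence and using the local pressure decomposition. Standard interior regularity for the stationary Navier--Stokes system then gives, on $Q_{1/2}$,
\[
|\nabla^k v|\le C_k\,\|v\|_{L^\infty(Q_1)}\,(1+\|v\|_{L^\infty(Q_1)})^{m_k}\le C_k\,dM .
\]
A cleaner route is to bootstrap through the linear Stokes system with the bounded forcing $\dive(v\otimes v)$, which also gives the sharper linear-in-$dM$ bound. Scaling back yields
\[
|\nabla^k u(x_0)|=d^{-1-k}|\nabla^k v(0)|\le C d^{-k}M,
\]
which is the claimed estimate for $u$. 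Smoothness follows from the same bootstrap.

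\emph{Pressure.} Here a normalizing constant $c_p$ is needed. On each cylinder, $\nabla q=\Delta v-v\cdot\nabla v$, so $|\nabla q|\le C(dM+(dM)^2)\le CdM$, and hence $|\nabla^{k+1} p(x_0)|\le C d^{-k-1}M$ for all $k\ge 0$. For $k=0$, integrate $\nabla p$. First integrate radially in $x'$ from $|x'|=R'$ down to $|x'|$; since $\int_{|x'|}^{R'} s^{-1}(|\ln s|+1)\,ds\le C(|\ln|x'||+1)^2$, this gives at most a $\ln^2$ bound, which is worse than claimed. The bound $|p-c_p|\le C(|\ln|x'||+1)|x'|^{-1}$ is much weaker, however, and it follows directly: $|\nabla p|\le C(|\ln r|+1)/r$ integrates radially to $C(\ln r)^2\le C(|\ln r|+1)/r$. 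Along the surface $|x'|=R'$, where $p$ is smooth and bounded, the values of $p$ differ by a bounded amount, so fix $c_p$ as the value of $p$ at a reference point on $|x'|=R'$. This gives $|p-c_p|\le C(|\ln|x'||+1)/|x'|$, and the higher-derivative bounds for $p-c_p$ are those already obtained for $\nabla^{k}p$ with $k\ge1$.

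The main obstacle is the pressure on the unit cylinder: with a velocity bound only, the pressure must be controlled locally up to a constant. I would handle this with the standard $\nabla q$ identity above, or with a local pressure decomposition, before starting the bootstrap.
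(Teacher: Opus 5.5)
Your overall strategy is the same scaling argument as Lemma 2.1 of \cite{LLY5}, which the paper invokes: rescale at scale $d\sim|x_0'|$, use that $d(|\ln d|+1)$ is bounded, apply interior regularity on the unit cylinder, scale back, and recover $p$ by integrating $\nabla p$ radially. Your treatment of $u$ is correct.

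\textbf{Error in the pressure step.} There is an off-by-one scaling error. Since $q(y)=d^2p(x_0+dy)$, one has $\nabla^{j}p(x_0)=d^{-2-j}\nabla^{j}q(0)$. So $|\nabla^{k+1}q(0)|\le C\,dM$ gives $|\nabla^{k+1}p(x_0)|\le C\,d^{-k-2}M$, not $C\,d^{-k-1}M$. Consequently the bound $|\nabla p|\le C(|\ln r|+1)/r$ that you integrate is not established, and nothing that good should be expected. The gradient $\nabla p$ contains $\Delta u$, which is only of size $(|\ln r|+1)/r^2$. Already for the Stokes system, the two-dimensional Stokeslet extended trivially in $x_3$ has $|u|\sim|\ln r|$ but $|p|\sim r^{-1}$.

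\textbf{Repair.} With the correct bound $|\nabla p|\le C(|\ln r|+1)/r^2$, radial integration from $|x'|=R'$ gives
$\int_r^{R'}(|\ln s|+1)s^{-2}\,ds\le C(|\ln r|+1)/r$.
This is exactly the claimed $k=0$ estimate. Likewise, the rescaled bounds $|\nabla^{k}p(x_0)|\le C\,d^{-k-1}M$ for $k\ge 1$ are exactly the claimed higher-order estimates.

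So the extra factor $|x'|^{-1}$ in the pressure bound is precisely what the method yields, not a weakening of it. Your aside that a $\ln^2$ bound would be ``worse than claimed'' also has the comparison backwards.

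With this correction, and with $c_p$ chosen as the value of $p$ at a reference point on $\{|x'|=R',\ a'\le x_3\le b'\}$ as you propose, the argument goes through. The constants depend only on $R,R',a,b,a',b',k$ and $C_0$.
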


For any $R>0$, let $D_R=\{x'\in \R^2\mid |x'|<R\}$ be the disk in $\R^2$ with radius $R$. Set $\rho=|x'|$  and $\Delta'=\partial_1^2+\partial_2^2$. We first consider the equation $\Delta' u=f$ for scalar functions $u$ and $f$ that are radially symmetric in the $x'$-variable. In this case,
 \[
   \Delta' u=\frac{1}{\rho}\partial_{\rho}(\rho\partial_{\rho}u).
 \]
 We nevertheless keep the notation $\Delta'$ for consistency with the recursive equations used later in Section \ref{sec_3}.

\begin{lem}\label{lem2_1}
Let $R>0$ and $f\in C(D_R\setminus\{0\})$ be radially symmetric, i.e. $f=f(\rho)$. Assume there exist constants $C_0>0$ and $\alpha\in \R\setminus\{-2\}$ such that  $|f(\rho)|\le C_0\rho^{\alpha}$ in $D_R\setminus\{0\}$. Then there exists a radially symmetric  solution $\bar{u}\in C^{2}(D_R\setminus\{0\})$ of
\[
\Delta' \bar{u}=f,\quad \textrm{ in }D_R\setminus\{0\},
\]
 such that, for $0\le k\le 2$,
\be\label{eq2_1_1}
|\partial_{\rho}^k\bar{u}|\le C\rho^{\alpha+2-k}, \quad
 \textrm{ in }D_R\setminus\{0\},
\ee
for a constant $C>0$ depending only on $\alpha$, $C_0$ and $R$.
Moreover, let $L\in\mathbb{N}$ and assume that $f\in C^L(D_R\setminus\{0\})$ satisfies
  $|\partial_{\rho}^kf|\le C_0\rho^{\alpha-k}$ for $0\le k\le L$, then $\bar{u}\in C^{L+2}(D_R\setminus\{0\})$ and (\ref{eq2_1_1}) holds for $0\le k\le L+2$, where $C$ depends on $\alpha, C_0, R$ and $L$.

    In particular, suppose $\alpha>\alpha_0$ for some fixed $\alpha_0>-2$. Let  $u$ be a radial solution of $\Delta' u=f$  in $D_R\setminus\{0\}$ satisfying $\lim_{\rho\to 0^+}u(\rho)=0$. If $|\partial_{\rho}^kf|\le C_0(|\alpha|+1)^k\rho^{\alpha-k}$ for $0\le k\le L$, then
 \be\label{eq2_1_2}
   |\partial_{\rho}^ku|\le DC_0(|\alpha|+1)^{k-2}\rho^{\alpha+2-k},  \textrm{ in }D_R\setminus\{0\},  \quad\forall 0\le k\le L+2,
 \ee
 for a constant $D>0$ depending only on $\alpha_0$ and $L$.
 	\end{lem}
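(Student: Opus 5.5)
We construct $\bar u$ explicitly by the variation-of-parameters formula for the radial Laplacian. Since $\Delta'\bar u=\rho^{-1}(\rho\bar u')'$, the general radial solution is
\[
\bar u(\rho)=\int_{s_0}^{\rho}\frac{1}{\tau}\int_{s_1}^{\tau}sf(s)\,ds\,d\tau ,
\]
and the lower limits are chosen according to the sign of $\alpha+2$. If $\alpha>-2$, take $s_1=0$ and $s_0=0$ when $\alpha+2>0$. The inner integral $g(\tau)=\int_0^\tau sf(s)\,ds$ then satisfies $|g|\le C_0\tau^{\alpha+2}/(\alpha+2)$, so $|\bar u'|=|g/\tau|\le C\tau^{\alpha+1}$. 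Since $\alpha+1>-1$, the outer integral from $0$ converges and gives $|\bar u|\le C\rho^{\alpha+2}$. If $\alpha<-2$, take $s_1=R$ and $s_0=R$ (or $s_0=\infty$ after truncating). Then $|g(\tau)|\le C_0(\tau^{\alpha+2}+R^{\alpha+2})/|\alpha+2|\le C\tau^{\alpha+2}$ for $\tau<R$, with $C$ depending on $R$. Integrating $\tau^{\alpha+1}$ with $\alpha+1<-1$ from $\rho$ to $R$ gives $C\rho^{\alpha+2}$. The second derivative follows from the equation, $\bar u''=f-\bar u'/\rho$, so $|\bar u''|\le C\rho^{\alpha}$. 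This proves (\ref{eq2_1_1}) for $k\le2$. The case $\alpha=-2$ is excluded because a logarithm appears there.

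Higher derivatives are handled by induction using $\bar u^{(k+2)}=\partial_\rho^{k}(f-\bar u'/\rho)$. Expanding $\partial_\rho^k(\bar u'\rho^{-1})$ by Leibniz expresses it through $\bar u^{(j+1)}\rho^{-1-(k-j)}$, each bounded by $C\rho^{\alpha-k}$ by the inductive hypothesis. Together with $|\partial^k_\rho f|\le C_0\rho^{\alpha-k}$ this gives (\ref{eq2_1_1}) up to order $L+2$, and $C^{L+2}$ regularity is immediate.

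For (\ref{eq2_1_2}), the constant must be tracked sharply in $\alpha$, with $D$ depending only on $\alpha_0$ and $L$. Since $u$ is radial, harmonic, and vanishing at $0$ up to $c_1\ln\rho+c_2$, and $\bar u$ from the first case also vanishes at $0$ (here $\alpha+2>0$), we get $u-\bar u=c_1\ln\rho+c_2$ with limit $0$. Hence $c_1=c_2=0$ and $u=\bar u$ is given by the formula with both lower limits $0$. Directly,
\[
|u'|\le \frac{C_0\rho^{\alpha+1}}{\alpha+2},\qquad |u|\le\frac{C_0\rho^{\alpha+2}}{(\alpha+2)^2}.
\]
Because $\alpha>\alpha_0>-2$, we have $\alpha+2\ge c(\alpha_0)(|\alpha|+1)$ with $c(\alpha_0)=\min(1,\alpha_0+2)/2$. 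This yields the factors $(|\alpha|+1)^{-2}$ and $(|\alpha|+1)^{-1}$ for $k=0,1$. For $k\ge 2$, the equation gives $u^{(k)}=\partial_\rho^{k-2}f-\partial_\rho^{k-2}(u'/\rho)$. The first term is bounded by $C_0(|\alpha|+1)^{k-2}\rho^{\alpha+2-k}$. The Leibniz terms are $\binom{k-2}{j}u^{(j+1)}\partial^{k-2-j}\rho^{-1}$, with $|\partial^{m}\rho^{-1}|=m!\rho^{-1-m}$, and inductively each is $\lesssim DC_0(|\alpha|+1)^{j-1}\rho^{\alpha-k+2}$. Since $j-1\le k-3<k-2$, these are dominated by $(|\alpha|+1)^{k-2}$ with combinatorial constants depending only on $L$. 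The induction therefore closes with a constant $D=D(\alpha_0,L)$.

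The main obstacle is this uniformity in $\alpha$. If derivatives of $u'/\rho$ were crudely bounded, the $k=2$ term alone would lose powers of $(|\alpha|+1)$. The key observation is that $u'$ and $u$ come with gains of $(|\alpha|+1)^{-1}$ and $(|\alpha|+1)^{-2}$ from integration, and the recursion lowers the $\alpha$-power relative to $f$'s derivatives. This is exactly what makes the bound with $(|\alpha|+1)^{k-2}$ consistent.
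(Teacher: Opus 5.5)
Your proposal is correct and follows the paper's proof essentially step for step: the same double-integral representation with both lower limits equal to $0$ or $R$ according to the sign of $\alpha+2$, induction on $\partial_\rho^{k+2}u=\partial_\rho^{k}f-\partial_\rho^{k}(\rho^{-1}\partial_\rho u)$ for higher derivatives, and, for (\ref{eq2_1_2}), the observation that the radial harmonic function $u-\bar u=c_1\ln\rho+c_2$ must vanish, combined with $\alpha+2\simeq|\alpha|+1$. The only slip is your explicit constant $c(\alpha_0)=\min(1,\alpha_0+2)/2$. It fails for $\alpha_0\in(-2,-1)$, because $(\alpha+2)/(|\alpha|+1)\to(\alpha_0+2)/(1-\alpha_0)<(\alpha_0+2)/2$ as $\alpha\downarrow\alpha_0$; taking $c(\alpha_0)=\min\{1,(\alpha_0+2)/3\}$ fixes it.
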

	\begin{proof}
	For radially symmetric functions $u, f$, the equation
	$
	\Delta'u=f
	$
	  reduces to
	\be\label{eq2_1_3}
	  \frac{1}{\rho}\partial_{\rho}(\rho\partial_{\rho}u(\rho))=f(\rho).
	\ee
	Define
	\[
	  \bar{u}=\int_{\rho_0}^{\rho}\frac{1}{s}\int_{\rho_0}^{s}yf(y)dyds, \quad \textrm{ where }  \rho_0:=\left\{
	      \begin{array}{ll}
	         0 & \textrm{ if }\alpha>-2,\\
	         R & \textrm{ if }\alpha< -2.
	      \end{array}
	    \right.
	\]
	By (\ref{eq2_1_3}), direct computation  gives $\Delta'\bar{u}=f$, and (\ref{eq2_1_1}) for $0\le k\le 2$ follows from the above  integral representation.
	For higher order derivatives, differentiating (\ref{eq2_1_3}) repeatedly yields
	  \be\label{eq2_1_4}
	    \partial_{\rho}^{k+2}u=\sum_{i=0}^{k}c_{ki}\rho^{-1-k+i}\partial_{\rho}^{i+1}u+\partial_{\rho}^{k}f, \quad \forall k\ge 0,
	  \ee
	  for some constants $c_{ki}$, $0\le i\le k$. Then   (\ref{eq2_1_1}) for $3\le k\le L+2$ follows by induction.

	When $\alpha>-2$, the general solution of (\ref{eq2_1_3}) is
	\[
	  u=\int_{0}^{\rho}\frac{1}{s}\int_{0}^{s}yf(y)dyds+C_1\ln\rho+C_2.
	\]
	If $\lim_{\rho\to 0^+}u(\rho)=0$, then $C_1=C_2=0$.
	Note for $\alpha>\alpha_0>-2$, we have $C^{-1}(|\alpha|+1)\le \alpha+2\le C(|\alpha|+1)$ for some $C>0$ depending only on $\alpha_0$. So  (\ref{eq2_1_2}) holds for $0\le k\le 2$.
	The estimates for higher-order derivatives follow from (\ref{eq2_1_4}) by induction.
	\end{proof}

	 We next extend Lemma \ref{lem2_1} to axisymmetric vector fields.
	 A planar vector field $f(x')=f^{\rho}e_{\rho}+f^{\phi}e_{\phi}$, $x'\in\R^2\setminus\{0\}$, is said to be axisymmetric  if $f^{\rho}$ and $f^{\phi}$ are independent of $\phi$.  For an axisymmetric vector field $u'=u^{\rho}e_{\rho}+u^{\phi}e_{\phi}$, we have
	 \be\label{eqA_E_2}
	   \Delta' u'=(\frac{1}{\rho}\partial_{\rho}(\rho\partial_{\rho}u^{\rho})-\frac{u^{\rho}}{\rho^2})e_{\rho}+(\frac{1}{\rho}\partial_{\rho}(\rho\partial_{\rho}u^{\phi})-\frac{u^{\phi}}{\rho^2})e_{\phi}.
	 \ee
	\begin{lem}\label{lem2_2}
	Let $R>0$ and $f=f^{\rho}e_{\rho}+f^{\phi}e_{\phi}\in C(D_R\setminus\{0\})$ be an axisymmetric vector field. Assume there exist constants $C_0>0$ and $\alpha\in \R\setminus\{-1, -3\}$, such that  $|f|\le C_0\rho^{\alpha}$ in $D_R\setminus\{0\}$. Then there exists an axisymmetric solution
 $\bar{u}=\bar{u}^{\rho}e_{\rho}+\bar{u}^{\phi}e_{\phi}\in C^{2}(D_R\setminus\{0\})$ of
 \[
 \Delta' \bar{u}=f,\quad \textrm{ in }D_R\setminus\{0\},
 \]
 satisfying, for $0\le k\le 2$, 
 \be\label{eq2_2_1}
 |\partial_{\rho}^k\bar{u}|\le C\rho^{\alpha+2-k}, \quad
  \textrm{ in }D_R\setminus\{0\},
 \ee
 for a constant $C>0$ depending only on $\alpha$, $C_0$ and $R$.
 Moreover, let $L\in\mathbb{N}$ and assume that $f\in C^L(D_R\setminus\{0\})$ satisfies   $|\partial_{\rho}^kf|\le C_0\rho^{\alpha-k}$ for $0\le k\le L$.  Then $\bar{u}\in C^{L+2}(D_R\setminus\{0\})$ and  (\ref{eq2_2_1}) holds for $0\le k\le L+2$, where $C$ depends on $\alpha, C_0, R, L$.

    In particular, when $\alpha>\alpha_0$ for some fixed $\alpha_0>-1$ and $u$ is an axisymmetric solution of $\Delta' u=f$ satisfying $\lim_{\rho\to 0^+}u^j(\rho)=\lim_{\rho\to 0^+}\partial_{\rho}u^j(\rho)=0$,
    $j=\rho, \phi$, if $|\partial_{\rho}^kf^j|\le C_0(|\alpha|+1)^k\rho^{\alpha-k}$ for $0\le k\le L$, then
 \be\label{eq2_2_4}
   |\partial_{\rho}^ku|\le DC_0(|\alpha|+1)^{k-2}\rho^{\alpha+2-k}, \textrm{ in }D_R\setminus\{0\}, \quad \forall 0\le k\le L+2,
 \ee
 for a constant $D>0$ depending only on $\alpha_0$ and $L$.
	\end{lem}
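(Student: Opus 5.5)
The plan is to reduce to Lemma \ref{lem2_1} componentwise. By (\ref{eqA_E_2}), the equation $\Delta'\bar u=f$ for an axisymmetric planar field decouples into two scalar ODEs of the same form,
\[
\frac{1}{\rho}\partial_{\rho}(\rho\partial_{\rho}\bar u^{j})-\frac{\bar u^{j}}{\rho^2}=f^{j},\qquad j=\rho,\phi .
\]
Each can be written as $\partial_\rho\big(\rho^{-1}\partial_\rho(\rho\bar u^j)\big)=f^j$. Its homogeneous solutions are $\rho$ and $\rho^{-1}$.

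Integrating twice gives the explicit particular solution
\[
\bar u^{j}(\rho)=\frac{1}{\rho}\int_{\rho_1}^{\rho}s\int_{\rho_0}^{s}f^{j}(y)\,dy\,ds .
\]
The base points are chosen according to the sign conditions. Take $\rho_0=0$ if $\alpha>-1$ and $\rho_0=R$ if $\alpha<-1$, so that $|\int_{\rho_0}^s f^j|\le C s^{\alpha+1}$. Then take $\rho_1=0$ if $\alpha+3>0$ and $\rho_1=R$ if $\alpha<-3$, so that $|\int_{\rho_1}^\rho s\cdot s^{\alpha+1}ds|\le C\rho^{\alpha+3}$. The excluded values $\alpha=-1,-3$ are exactly where a logarithm would appear, mirroring the exclusion $\alpha\neq-2$ in Lemma \ref{lem2_1}.

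These integrals give $|\bar u^j|\le C\rho^{\alpha+2}$. Further:
\begin{itemize}
\item Computing $\partial_\rho\bar u^j=-\rho^{-2}\int s\int f+\int_{\rho_0}^\rho f$ gives the $k=1$ bound.
\item The ODE itself, $\partial_\rho^2\bar u^j=f^j-\rho^{-1}\partial_\rho\bar u^j+\rho^{-2}\bar u^j$, gives the $k=2$ bound.
\item For $k\ge3$, differentiate the ODE $k-2$ times to obtain an identity analogous to (\ref{eq2_1_4}), now including terms $\rho^{-2-k+i}\partial_\rho^i u$, and induct.
\end{itemize}
Since $f^\rho,f^\phi$ control $f$ up to constants, the estimates transfer to $\partial_\rho^k\bar u$ as in (\ref{eq2_2_1}).

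For the uniqueness/quantitative part with $\alpha>\alpha_0>-1$, the general solution is the particular solution (with $\rho_0=\rho_1=0$) plus $C_1\rho+C_2\rho^{-1}$. The condition $u^j\to0$ forces $C_2=0$. The condition $\partial_\rho u^j\to0$ then forces $C_1=0$, since the particular solution has derivative $O(\rho^{\alpha+1})\to0$. So $u$ equals the explicit integral.

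The only genuinely delicate step is tracking the $\alpha$-dependence of the constants in (\ref{eq2_2_4}).
\begin{itemize}
\item The two integrations produce factors $1/(\alpha+1)$ and $1/(\alpha+3)$. For $\alpha>\alpha_0>-1$ both are bounded by $C(\alpha_0)(|\alpha|+1)^{-1}$, which gives the $(|\alpha|+1)^{-2}$ gain for $k=0$.
\item For $k=1$, the two terms give $(|\alpha|+1)^{-1}$.
\item For $k\ge2$, the induction via the differentiated ODE uses $|\partial_\rho^{i}f|\le C_0(|\alpha|+1)^{i}\rho^{\alpha-i}$. The coefficients $c_{ki}$ depend only on $k$, so each step multiplies by at most a constant times $(|\alpha|+1)$, giving $(|\alpha|+1)^{k-2}$ with $D$ depending only on $\alpha_0,L$.
\end{itemize}
This bookkeeping is routine but is where care is needed to make $D$ independent of $\alpha$.
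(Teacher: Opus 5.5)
Your proposal is correct and follows essentially the same route as the paper: an explicit double-integral particular solution with base points $0$ or $R$ chosen by the signs of $\alpha+1$ and $\alpha+3$, higher derivatives from the differentiated ODE, the kernel $C_1\rho+C_2\rho^{-1}$ removed by the two limit conditions, and the factors $(\alpha+1)^{-1},(\alpha+3)^{-1}$ bounded by $C(\alpha_0)(|\alpha|+1)^{-1}$. The only difference is the factorization: you write the operator as $\partial_\rho(\rho^{-1}\partial_\rho(\rho\,\cdot))$, giving $\bar u^j=\rho^{-1}\int s\int f^j$, whereas the paper uses $\bar u^j=\rho\int r^{-3}\int y^2 f^j$; both lead to the same exponents and the same excluded values. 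Also, despite your opening sentence, you never actually invoke Lemma \ref{lem2_1}; you only mirror its argument.
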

	\begin{proof}
	By (\ref{eqA_E_2}), the equation $\Delta'u=f$ for axisymmetric vector field $u$ is equivalent to
		\be\label{eq2_2_2}
	  \frac{1}{\rho}\partial_{\rho}(\rho\partial_{\rho}u^{j})-\frac{u^{j}}{\rho^2}=f^{j}, \quad j=\rho, \phi.
	\ee
Define
	\[
	  \bar{u}^{j}=\rho\int_{\rho_0}^{\rho}r^{-3}\int_{r_0}^ry^2f^{j}(y)dydr, \quad j=\rho, \phi,
	  	\]
	where
	\[
	   r_0=\left\{
	       \begin{split}
	       & 0, \quad\textrm{ if }\alpha>-3,\\
	       &  R, \quad \textrm{ if }\alpha<-3,
	       \end{split}
	    \right., \quad  \rho_0=\left\{
	       \begin{split}
	       & 0, \quad\textrm{ if }\alpha>-1,\\
	       & R, \quad \textrm{ if }\alpha<-1.
	       \end{split}
	    \right.
	\]
	By (\ref{eq2_2_2}),
	direct computation  shows  that $\bar{u}$ satisfies $\Delta' \bar{u}=f$. 	The estimate (\ref{eq2_2_1}) for $0\le k\le 2$ follows from the above integral representation. 	 For higher order derivatives, differentiating (\ref{eq2_2_2}) repeatedly yields
	  \be\label{eq2_3_5}
	    \partial_{\rho}^{k+2}u^j=\sum_{i=0}^{k+1}c_{ki}\rho^{-2-k+i}\partial_{\rho}^{i}u^j+\partial_{\rho}^{k}f^j, \quad \forall k\ge 0, j=\rho, \phi,
	  \ee
	  with some constants $c_{ki}$, $0\le i\le k+1$. Then   (\ref{eq2_2_1}) for $3\le k\le L+2$ follows by induction.

	When $\alpha>-1$,
	the general solution of (\ref{eq2_2_2}) is
	\[
	   u^j=\rho\int_{0}^{\rho}r^{-3}\int_{0}^ry^2f^{j}(y)dydr+C_1\rho+C_2\rho^{-1}.
	\]
	If $\lim_{\rho\to 0^+}u^j(\rho)=\lim_{\rho\to 0^+}\partial_{\rho}u^j(\rho)=0$,
	$j=\rho, \phi$, then $C_1=C_2=0$. Note for $\alpha>\alpha_0>-1$, we have $C^{-1}(|\alpha|+1)\le \alpha+1\le \alpha+3\le C(|\alpha|+1)$ for some $C>0$ depending only on $\alpha_0$.
	 So (\ref{eq2_2_4}) holds for $0\le k\le 2$. The estimates for higher-order derivatives follow from (\ref{eq2_3_5}) by induction.
\end{proof}

\section{Proof of Theorem \ref{thm_main} and Proposition \ref{prop1_1}}
\label{sec_3}

 Let $(u, p)$ be a $(-1)$-homogeneous axisymmetric solution of (\ref{NS}) in $\Omega_{\delta}\setminus\{x'=0\}$ for some $\delta>0$, satisfying $|u|=o(|x'|^{-1})$ as $x'\to 0$ for each fixed $x_3>0$. By homogeneity, this is equivalent to $|u|=o(\rho^{-1})$ on $\{x_3=1\}$.   By Theorem 1.3 in \cite{LLY1},  $|u|=O(|\ln |x'||+1)$ as $x'\to 0$  for each fixed $x_3>0$.
  We first determine the leading-order terms of such a solution, and  then construct and classify such solutions as stated in Theorem \ref{thm_main}.

As in Section \ref{sec_2}, homogeneous functions in $\R^3$ will be identified with their traces on $\{x_3=1\}$.
  Derivatives involving $\partial_3$ are understood through this homogeneous extension and are computed through (\ref{eqE_0_1})-(\ref{eqE_0_3}).
  For convenience, set
  \[
    \Gamma_{\delta}:=\{x\in\R^3\mid 0<|x'|<\delta, x_3=1\}.
  \]
Recall  we denote  $\rho=|x'|$ on $\{x_3=1\}$ and $t=\ln \rho$. For any radially symmetric scalar function or axisymmetric vector field $f(x')$ on $D_{\delta}\setminus\{0\}\subset\R^2$,  the relation between Cartesian derivatives and radial derivatives gives
	\be\label{eq3_E_1}
	|\nabla_{x'}^kf|\le C(k)\sum_{i=0}^{k}\rho^{-k+i}|\partial_{\rho}^if|, \quad \forall k\ge 0, \quad 0<\rho<\delta,
	\ee
for some constant $C(k)>0$ depending only on $k$.

To establish the expansion of the solutions we study, we first extract their leading order terms.
\begin{lem}\label{lem3_1}
   Let $\delta>0$ and $(u, p)\in C^2(\Omega_{\delta}\setminus\{x'=0\})$ be a $(-1)$-homogeneous axisymmetric solution of (\ref{NS}) in $\Omega_{\delta}\setminus\{x'=0\}$    satisfying $|u|=o(|x'|^{-1})$ as $x'\to 0$ for each fixed $x_3>0$.
Then there exist unique  constants $c_0, d_0, b_0^{\rho}, b_0^{\phi}$,
and axisymmetric functions $h_0, r_0\in C^{\infty}(\Omega_{\delta}\setminus\{x'=0\})$, where $h_0$ is $(-1)$-homogeneous and $r_0$ is $(-2)$-homogeneous, such that on $\Gamma_{\delta}$,
\begin{equation}\label{eq3_1_0_1}
\begin{split}
&  u^{\rho}=\rho(\frac{1}{2}c_0t+b_0^{\rho})+h_0^{\rho}, \quad  u^{\phi}=b_0^{\phi}\rho+h_0^{\phi}, \quad
 u^3=c_0t+b_0^3+h_0^3, \quad p=c_0t+d_0+r_0,
 \end{split}
\end{equation}
with $b_0^3=2b_0^{\rho}-\frac{1}{2}c_0$. Moreover, for any $0<\delta'<\delta$, $k, l\in\mathbb{N}$ and any $\epsilon>0$, there exists a constant $C>0$, depending on $(u, p), \delta,\delta',  k, l, \epsilon$, such that
\be\label{eq3_1_0_2}
 |\nabla^k_{x'}\partial_{3}^lh_0'|\le C\rho^{3-k-\epsilon}, \quad  |\nabla^k_{x'}\partial_{3}^l h_0^3|\le C\rho^{2-k-\epsilon}, \quad |\nabla^k_{x'}\partial_{3}^l r_0|\le C\rho^{2-k-\epsilon},\quad \textrm{ on }\Gamma_{\delta'}.
 \ee
\end{lem}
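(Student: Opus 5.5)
The plan is to work on the cross-section $\Gamma_\delta$, where (\ref{eqNC_C}) becomes a system of ODEs in $\rho$. Each component will be split into a leading planar operator, $\Delta'$ on scalars or the vector $\Delta'$ of (\ref{eqA_E_2}) on $u'$, plus a remainder that is smaller by a factor of order $\rho^{2}|\ln\rho|^{2}$. The leading terms are then read off from the homogeneous solutions $C_1\ln\rho+C_2$ (scalar) and $C_1\rho+C_2\rho^{-1}$ (vector). The hypothesis $|u|=o(\rho^{-1})$ kills the $\rho^{-1}$ modes, and the error bounds are improved by a bootstrap.

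\textbf{Step 1: a priori bounds.} By Theorem 1.3 of \cite{LLY1}, as quoted at the beginning of this section, $|u|\le C(|\ln\rho|+1)$ near the axis. Lemma \ref{lemL_1}, applied on a slab $\Omega_{a,b,R}$ around $x_3=1$ and combined with homogeneity, then gives $|\partial_\rho^k u|\le C(|\ln\rho|+1)\rho^{-k}$ and $|\partial_\rho^k(p-c_p)|\le C(|\ln\rho|+1)\rho^{-k-1}$ on $\Gamma_{\delta''}$ for any $\delta''<\delta$. Using (\ref{eqE_1}), (\ref{eqE_6}) and $\partial_3 f=-\rho^{1-\alpha}\partial_\rho(\rho^\alpha f)$, I rewrite (\ref{eqNC_C}) in the form
\[
\Delta' u' = \nabla_{x'}p + F',\qquad \Delta' u^3 = \partial_3 p + F^3,\qquad \Delta' p = F^p .
\]
Here the $F$'s collect the terms $\partial_3^2u$, $u\cdot\nabla u$, and, for $p$, the divergence of the momentum equation, i.e.\ $-\partial_3^2 p-\dive(u\cdot\nabla u)$. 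Each of these terms is explicitly of lower order in $\rho$ than the left-hand side.

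\textbf{Step 2: the swirl.} The equation for $u^\phi$ contains no pressure, and its forcing $F^\phi$ is linear in $u^\phi$ with coefficients of size $O(|\ln\rho|)$ or $O(|\ln\rho|\rho^{-1})$. The Step 1 bounds give $|F^\phi|\le C\rho^{-1-\epsilon}$. Lemma \ref{lem2_2} with $\alpha=-1-\epsilon$ yields a particular solution of size $O(\rho^{1-\epsilon})$. Hence $u^\phi=A\rho+B\rho^{-1}+O(\rho^{1-\epsilon})$, and $B=0$ because $u=o(\rho^{-1})$. I then feed $u^\phi=O(\rho^{1-\epsilon})$ back into $F^\phi$, which gives $|F^\phi|\le C\rho^{1-2\epsilon}$. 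A second application of Lemma \ref{lem2_2} (now with $\alpha>-1$, so no free constants are created beyond $A\rho$) gives $u^\phi=b_0^\phi\rho+O(\rho^{3-\epsilon})$.

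\textbf{Step 3: $p$, $u^3$, $u^\rho$ jointly.} This is the step I expect to be the main obstacle. The three unknowns are coupled, and Lemma \ref{lemL_1} controls $p$ only modulo a constant and only with the weak weight $\rho^{-1}|\ln\rho|$. I would first use the divergence relation (\ref{eqE_7}): since $\rho u^\rho\to 0$, it can be integrated to $\rho u^\rho=\int_0^\rho s\,\partial_s(s u^3)\,ds$, which expresses $u^\rho$ in terms of $u^3$. Next I show that $\Delta' p=F^p$ with $|F^p|\le C\rho^{-\epsilon}$. This is where the bootstrap has to close: the quadratic terms $\dive(u\cdot\nabla u)$ must be checked to be only logarithmically singular after cancellation, using the structure of (\ref{eqE_6}) and the $o(\rho^{-1})$ hypothesis. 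If direct estimation fails, I would instead integrate the radial momentum equation to obtain $\partial_\rho p$ in terms of $u$, and estimate that expression directly. Once $|F^p|\le C\rho^{-\epsilon}$ is known, Lemma \ref{lem2_1} gives $p=c_0\ln\rho+d_0+O(\rho^{2-\epsilon})$. The third equation has $\partial_3 p=-2p-\rho\partial_\rho p=O(|\ln\rho|)$, so Lemma \ref{lem2_1} gives $u^3=C_1\ln\rho+C_2+O(\rho^{2-\epsilon})$. Inserting this into the integrated divergence identity gives
\[
u^\rho=\rho\Bigl(\tfrac{C_1}{2}t+\tfrac{C_2}{2}+\tfrac{C_1}{4}\Bigr)+O(\rho^{3-\epsilon}),
\]
which defines $b_0^\rho$ and yields $b_0^3=C_2=2b_0^\rho-\tfrac12 C_1$. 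Finally, the $\rho^{-1}$ terms in the radial equation balance as $\tfrac{C_1}{2}\cdot\tfrac{2}{\rho}=\tfrac{c_0}{\rho}$, since $\Delta'(\rho t\,e_\rho)=\tfrac{2}{\rho}e_\rho$ and $\partial_\rho p=\tfrac{c_0}{\rho}+\dots$ while all other terms are $O(\rho|\ln\rho|^2)$. This forces $C_1=c_0$. Uniqueness of the four constants follows from (\ref{eq1_a_3}).

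\textbf{Step 4: derivative bounds.} The estimates (\ref{eq3_1_0_2}) for the remainders $h_0$ and $r_0$ follow from the higher-order parts of Lemmas \ref{lem2_1} and \ref{lem2_2}, applied to the remainder equations with forcing bounds $|\partial_\rho^k F|\le C\rho^{\alpha-k}$. These forcing bounds come from Step 1 and from the improved bounds already obtained for the lower-order terms. Mixed derivatives are then converted to $\nabla_{x'}^k\partial_3^l$ via (\ref{eqE_0_4}) and (\ref{eq3_E_1}). Smoothness of $h_0$ and $r_0$ follows from Lemma \ref{lemL_1}.
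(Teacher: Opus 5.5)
The gap is in Step 3, at the point you flagged, but the obstruction is not where you place it.

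What prevents $|F^p|\le C\rho^{-\epsilon}$ is not $\dive(u\cdot\nabla u)$: once $u'$ is improved, that term is $O(\rho^{-2\epsilon})$ with no cancellation needed. The obstruction is the self-term $\partial_3^2p=6p+6\rho\,\partial_\rho p+\rho^2\partial_\rho^2p$ on $\{x_3=1\}$. Under the bound of Lemma \ref{lemL_1}, this is only $O(\rho^{-1}(|\ln\rho|+1))$. So you first need an a priori bound $\rho^k|\partial_\rho^kp|\le C\rho^{-\epsilon}$, and the proposal does not provide one. As written, the key bound is asserted rather than proved. Any of the following would close it:
\begin{itemize}
\item \emph{The paper's route.} The paper uses the $\theta$-component of the momentum equation on $\bS^2$, $\frac{d}{d\theta}\bigl(\frac12(u^\theta)^2-u^R+p\bigr)=\cot\theta\,(u^\phi)^2$. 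This contains no derivatives of $u$ and yields $|p|\le C(|\ln\rho|^3+1)$ immediately.
\item \emph{Your fallback.} Integrating the radial equation works, but only together with your divergence identity. With Step 1 bounds alone, $\partial_\rho^2u^\rho=O(\rho^{-2}|\ln\rho|)$, and the radial equation just returns $\partial_\rho p=O(\rho^{-2}|\ln\rho|)$, no better than Lemma \ref{lemL_1}. However, $\partial_\rho(\rho u^\rho)=\rho\,\partial_\rho(\rho u^3)$ gives $\partial_\rho^ku^\rho=O(\rho^{1-k}(|\ln\rho|+1))$. After that, every term of the radial equation is $O(\rho^{-1}(|\ln\rho|^2+1))$, the worst being $(u^\phi)^2/\rho$. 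Hence $p=O(|\ln\rho|^3+1)$, which is the paper's bound in cylindrical form.
\item \emph{Iterating Lemma \ref{lem2_1}.} From Lemma \ref{lemL_1} alone, $|\partial_\rho^kF^p|\le C\rho^{-2-\epsilon-k}$. Lemma \ref{lem2_1} with $\alpha=-2-\epsilon$ is admissible (only $\alpha=-2$ is excluded) and gives $p=C_1\ln\rho+C_2+O(\rho^{-\epsilon})$. A second pass, with $u'$ already improved, then gives $|F^p|\le C\rho^{-2\epsilon}$.
\end{itemize}

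There is also an ordering problem in Step 2. The second pass on the swirl equation needs $u^\rho=O(\rho|\ln\rho|)$, because $u^\rho\partial_\rho u^\phi+u^\phi u^\rho/\rho\approx 2b_0^\phi u^\rho$. With only $u^\rho=O(|\ln\rho|)$ you get $|F^\phi|\le C\rho^{-2\epsilon}$, and the iteration stalls at $u^\phi=b_0^\phi\rho+O(\rho^{2-\epsilon})$. Doing the divergence integration first fixes this, since it uses only Step 1.

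With these repairs, your route is a valid alternative to the paper's. The paper subtracts $\frac12c_0\rho t\,e_\rho$ (whose $\Delta'$ equals $\nabla_{x'}p_0$) and solves the planar vector equation for the remainder of $u'$. It uses $\dive u=0$ only at the end, to get $a_0^3=c_0$ and $b_0^3=2b_0^\rho-\frac12c_0$. You instead read $u^\rho$ off $u^3$ through $\dive u=0$, and fix $C_1=c_0$ from the $\rho^{-1}$ balance in the radial equation. Your version has the advantage that the improvement of $u^\rho$ needs no information on $p$.
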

\begin{proof}
Let $0<\delta'<\delta$ and let $(u, p)$ be a fixed $(-1)$-homogeneous axisymmetric solution of (\ref{NS}) in $\Omega_{\delta}\setminus\{x'=0\}$  satisfying $|u|=o(|x'|^{-1})$ as $x'\to 0$ for each $x_3>0$.
It suffices to prove the lemma for $0<\epsilon<1/2$. 
Throughout the proof, let $k, l\in\mathbb{N}$ and $0<\epsilon<1/2$ be  arbitrary in each estimate, and $C>0$ denote a constant depending only on $(u, p), \delta, \delta', k, l, \epsilon$, which may vary from line to line.
%Let $k, l\in\mathbb{N}$ and $0<\epsilon<1/2$ be  arbitrary.
%Unless otherwise stated, all estimates below hold for every $k,l\in\mathbb N$ and $0<\epsilon<1/2$.
%Throughout the proof, $C>0$ denotes a constant depending only on $(u, p), \delta, \delta', k, l, \epsilon$, which may vary from line to line.

\medskip

\noindent 1. By Theorem 1.3 in \cite{LLY1}, we have $|u|\le C(|\ln |x'||+1)$ in any sub-cylinder of $\Omega_{\delta}\setminus\{x'=0\}$.
	 Apply Lemma \ref{lemL_1} on compact sub-cylinders of $\Omega_{\delta}$ for the estimates near $\{x'=0\}$,  using the smoothness of solutions away from $\{x'=0\}$, and then using (\ref{eqE_0_4}),
	 we obtain, for all $k, l\in\mathbb{N},  0<\epsilon<1/2$, that 
	\begin{equation}\label{eq3_1_1}
	\begin{split}
	 &   |\nabla_{x'}^k\partial_3^lu|\le C\rho^{-k}(|\ln \rho|+1)\le C\rho^{-k-\epsilon}, \quad \textrm{ in }\Gamma_{\delta'}. %\forall k, l\in\mathbb{N},\  0<\epsilon<1/2,\x\in\Gamma_{\delta'}.
	   \end{split}
	\end{equation}
	We first show that this implies $|p|\le C(|\ln\rho|^3+1)$ on $\Gamma_{\delta'}$.
To see this,  write (\ref{NS}) in spherical coordinates $(R, \theta, \phi)$, where $R=|x|$, $\theta$ is the angle between $x$ and the positive $x_3$-axis, and $\phi$ is the azimuthal angle.  The spherical form of (\ref{NS}) gives (see e.g. \cite{LLY1} for the full system)
\[
    \frac{d}{d\theta} (\frac{1}{2}(u^{\theta})^2 - u^R + p) =  \cot\theta (u^{\phi})^2 \quad \textrm{ on }\bS^2.
\]
Let $\bar{\delta}=(\delta'+\delta)/2$ and set $\theta_0=\tan^{-1}\bar{\delta}$. We have
	\begin{equation}\label{eq_p}
	   p=C_0+u^R-\frac{1}{2}(u^{\theta})^2+\int_{\theta_0}^{\theta}(\cot \alpha) (u^{\phi})^2d\alpha, \quad \textrm{ on }\bS^2,
	\end{equation}
	where
	$C_0$ is a constant.
	Since $u$ is smooth away from $\{x'=0\}$ and $|u|\le  C(|\ln\sin\theta|+1)$  on $\bS^2\cap\{0<\theta<\theta_0\}$,
	we have
	$|p|\le C(|\ln \sin \theta|^3+1)$ on $\bS^2\cap\{0<\theta<\theta_0\}$. Thus for any $0<\epsilon<1/2$, 
	\[
	  |p|\le C(|\ln\rho|^3+1)\le C\rho^{-\epsilon}, \quad \textrm{ in }\Gamma_{\delta'}. %\forall 0<\epsilon<1/2,\   x\in\Gamma_{\delta'}.
	\]
	Differentiating (\ref{eq_p}) repeatedly in $\theta$,  using the estimates for $u$ and its derivatives in  (\ref{eq3_1_1}), and then converting $\theta$-derivatives to $x'$-derivatives and $x_3$-derivatives through homogeneity, we have, for all $k, l\in\mathbb{N}, 0<\epsilon<1/2$, that 
	\begin{equation}\label{eq3_1_2}
	  |\nabla^{k}_{x'} \partial_3^l p|\le C\rho^{-k}(|\ln \rho|^3+1)\le C\rho^{-k-\epsilon}, \quad \textrm{ in }\Gamma_{\delta'}.
	  %\forall k, l\in\mathbb{N},\  0<\epsilon<1/2,\x\in \Gamma_{\delta'}.
	\end{equation}

	\noindent 2. We first show that $u'$ has improved estimate near $\{x'=0\}$,
	and use it to extract the leading-order terms of $u^3$.
	Write the equation of $u'$ in (\ref{NS}) as
 \be\label{eq3_1_6}
   \Delta' u'=-\partial^2_3 u'+u\cdot \nabla u'+\nabla_{x'}p=:g_{-1}' \quad \textrm{ in }\Gamma_{\delta}.
 \ee
	By (\ref{eq3_1_1}) and (\ref{eq3_1_2}),  using  (\ref{eq3_1_1})  with $\epsilon$ replaced by $\epsilon/2$ in the nonlinear term, we have, for all $k\in\mathbb{N}$ and $0<\epsilon<1/2$, that 
	\[
	|\nabla^k_{x'}g_{-1}'|\le C\rho^{-k-1-\epsilon}, \quad \textrm{ in }\Gamma_{\delta'}. %\forall k\in\mathbb N,\ 0<\epsilon<1/2,\ x\in\Gamma_{\delta'}.
	\]
	Since $g_{-1}'$ is axisymmetric, by Lemma \ref{lem2_2} and (\ref{eq3_E_1}), there exists an axisymmetric vector field $\tilde{h}_{0}'(x')\in C^{\infty}(D_{\delta'}\setminus\{0\})$ such that
	\[
	  \Delta'\tilde{h}_0'(x')=g_{-1}'(x', 1), \quad \textrm{ in } D_{\delta'}\setminus\{0\},
	\]
	and for any $k\in\mathbb{N}$ and $0<\epsilon<1/2$, 
	\[
	%\Delta'\tilde{h}_0'(x')=g_{-1}'(x', 1), \textrm{ and }
	|\nabla^k_{x'}\tilde{h}_0'|\le C\rho^{1-k-\epsilon},
	 \quad  %\forall k\in\mathbb N,\ 0<\epsilon<1/2, \ 
	 \textrm{ in } D_{\delta'}\setminus\{0\}.
	\]
	 Let $\tilde{u}_0'(x')=u'(x', 1)-\tilde{h}_0'(x')$. Then
	 \[
	 \Delta' \tilde{u}_0'=0\quad \textrm{ in }D_{\delta'}\setminus\{0\}.
	 \]
	 Moreover,  $\tilde{u}_0'$ is axisymmetric and satisfies $|\tilde{u}_0'|\le C(|\ln\rho|+1)$. Note the system $\Delta' \tilde{u}_0'=0$ for an axisymmetric vector field reduces to
	 \be\label{eq3_1_8}
	    \frac{1}{\rho}\partial_{\rho}(\rho\partial_{\rho}\tilde{u}^{j}_0)-\frac{\tilde{u}_0^{j}}{\rho^2}=0, \quad j=\rho, \phi, \quad \textrm{ in }D_{\delta'}\setminus\{0\}.
	 \ee
	 Hence
	 \be\label{eq3_1_9}
	   \tilde{u}_0^j=C_1^j\rho+C_2^j\rho^{-1},\quad j=\rho, \phi,
	 \ee
	 for some constants $C_1^j, C_2^j$.
	 Since $|\tilde{u}_0'|\le C(|\ln\rho|+1)$, we have $C_2^j=0$.
	 Then $u'=\tilde{h}_0'+\tilde{u}_0'$ satisfies $|\nabla^k_{x'}u'|\le C\rho^{1-k-\epsilon}$ on $\Gamma_{\delta'}$ for any $k\in\mathbb{N}$ and $0<\epsilon<1/2$.
	 By (\ref{eqE_0_4}) and homogeneity of $u'$, we have, for all $k, l\in\mathbb{N}, 0<\epsilon<1/2$, that 
	 \be\label{eq3_1_3}
	    |\nabla^k_{x'}\partial_3^lu'|\le C\rho^{1-k-\epsilon},
	     \quad \textrm{ in }\Gamma_{\delta'}.%\forall k, l\in\mathbb{N},\ 0<\epsilon<1/2, \ x\in\Gamma_{\delta'}.
	 \ee
	Next, write the equation for $u^3$ in (\ref{NS})  as
	\[
	  \Delta' u^3=-\partial_3^2u^3+u\cdot \nabla u^3+\partial_3 p=:g_{-1}^3 \quad \textrm{ in }\Gamma_{\delta}.
	\]
	By (\ref{eq3_1_1}),  (\ref{eq3_1_2}) and (\ref{eq3_1_3}),  using the estimates with $\epsilon$ replaced by $\epsilon/2$ in the nonlinear term, we have, for any $k\in\mathbb{N}$ and $0<\epsilon<1/2$, that 
\[
  |\nabla^k_{x'} g_{-1}^3|\le C\rho^{-k-\epsilon}, \quad \textrm{ in }\Gamma_{\delta'}.%\forall k\in\mathbb N,\ 0<\epsilon<1/2,\ x\in\Gamma_{\delta'}.
\]
	Since $g_{-1}^3(x', 1)$ is radially symmetric, by Lemma \ref{lem2_1} and (\ref{eq3_E_1}), there exists a radially symmetric function $h_{0}^3(x')\in C^{\infty}(D_{\delta'}\setminus\{0\})$, such that for any $k\in\mathbb{N}$ and $0<\epsilon<1/2$, 
	\[
	\Delta'h_0^3(x')=g_{-1}^3(x', 1),  \textrm{ and }|\nabla^k_{x'}h_0^3|\le C\rho^{2-k-\epsilon}, \quad\textrm{ in }D_{\delta'}\setminus\{0\}.
	%\forall k\in\mathbb{N},\ 0<\epsilon<1/2,
	%\ x'\in D_{\delta'}\setminus\{0\}.
	\]
	 Let $u_0^3(x')=u^3(x', 1)-h_0^3(x')$. Then
	 \[
	 \Delta' u_0^3=0\quad \textrm{ in }D_{\delta'}\setminus\{0\}.
	 \]
	  Moreover, $u_0^3$ is radially symmetric and satisfies $|u_0^3|\le C(|\ln\rho|+1)$. Hence
	\[
	  u_0^3=a_0^3 t+b_0^3
	\]
	for some constants $a_0^3, b_0^3$. Define $u_0^3=a_0^3 t+b_0^3$ on $\Gamma_{\delta}$ and set $h_0^3=u^3-u_0^3$ there. Extend $h_0^3$ and $u_0^3$ to be $(-1)$-homogeneous functions in $\Omega_{\delta}\setminus\{x'=0\}$. By (\ref{eqE_0_4}) and homogeneity of $h_0^3$, we have,  for all $k, l\in\mathbb{N}, 0<\epsilon<1/2$, that 
	\[
	|\partial_3^l\nabla^k_{x'}h_0^3|\le C\rho^{2-k-\epsilon}, \quad \textrm{ in }\Gamma_{\delta'}. %\forall k, l\in\mathbb{N},\ 0<\epsilon<1/2,\ x\in \Gamma_{\delta'}.
	\]
	Thus $u^3=a_0^3 t+b_0^3+h_0^3$ on $\Gamma_{\delta}$, and the corresponding estimate in (\ref{eq3_1_0_2}) holds for $h_0^3$. The constants $a_0^3$ and $b_0^3$ will be determined later.

	\medskip

	\noindent 3. We now determine the leading-order terms of $p$. Taking divergence of the momentum equation in (\ref{NS}), we have
	\[
	   -\Delta' p=\partial_3^2p+\dive (u\cdot \nabla u)=:g_0^p \quad \textrm{ in }\Gamma_{\delta}.
	\]
	Denote $W=u\cdot \nabla u=W'+W_3e_3$, where $W'=u\cdot \nabla u'$ and $W_3=u\cdot \nabla u^3$. By (\ref{eq3_1_1}) and  (\ref{eq3_1_3}) with $\epsilon$ replaced by $\epsilon/2$, we have, for any $k\in\mathbb{N}$ and $0<\epsilon<1/2$, that
	\begin{equation}\label{eq3_1_4}
  |\nabla^k_{x'}W'| \le C\rho^{1-k-\epsilon}, \quad |\nabla^k_{x'}\partial_3W_3|\le C\rho^{-k-\epsilon},  \quad x\in\Gamma_{\delta'}.
\end{equation}
Then
\[
  |\nabla_{x'}^k\dive W|\le |\nabla_{x'}^{k+1}W'|+|\nabla_{x'}^{k}\partial_3W^3|\le C\rho^{-k-\epsilon}, \quad x\in \Gamma_{\delta'}.
\]
By (\ref{eq3_1_2}), we also have $|\nabla^k_{x'}\partial^2_3p|\le C\rho^{-k-\epsilon}$ on $\Gamma_{\delta'}$. So for any $k\in\mathbb{N}$ and $0<\epsilon<1/2$, 
\begin{equation*}
|\nabla^k_{x'}g_0^p|\le C(|\nabla_{x'}^k\partial^2_3p|+|\nabla_{x'}^k\dive W|)\le C\rho^{-k-\epsilon}, \quad \textrm{ in }\Gamma_{\delta'}. %\forall k\in\mathbb{N},\ 0<\epsilon<1/2,\ x\in \Gamma_{\delta'}.
\end{equation*}
 Since $g_0^p(x', 1)$ is radially symmetric, by Lemma \ref{lem2_1} and (\ref{eq3_E_1}), there exists a radially symmetric function $r_0(x')\in C^{\infty}(D_{\delta'}\setminus\{0\})$, such that for any $k\in\mathbb{N}$ and $0<\epsilon<1/2$, 
 \begin{equation}\label{eq3_1_5}
 -\Delta' r_0(x')=g_0^p(x', 1), \textrm{ and } |\nabla^k_{x'} r_0|\le C\rho^{2-k-\epsilon}, \quad \textrm{ in }D_{\delta'}\setminus\{0\}. % \forall k\in\mathbb{N},\ 0<\epsilon<1/2, \ x'\in D_{\delta'}\setminus\{0\}.
 \end{equation}
 Let $p_0(x')=p(x', 1)-r_0(x')$.  Then $\Delta' p_0=0$   in $D_{\delta'}\setminus\{0\}$. Moreover,  $p_0$ is radially symmetric, and satisfies $|p_0|\le C(|\ln \rho|^3+1)$. Hence
 \[
 p_0=c_0t+d_0
\]
  for some constants $c_0, d_0\in \R$. Define $p_0=c_0t+d_0$ on $\Gamma_{\delta}$, and set $r_0=p-p_0$ there. Extending $r_0$ and $p_0$ to be $(-2)$-homogeneous functions in $\Omega_{\delta}\setminus\{x'=0\}$, and using (\ref{eqE_0_4}), we have, for all $k, l\in\mathbb{N},  0<\epsilon<1/2$, that 
  \[
  |\partial_3^l\nabla^k_{x'} r_0|\le C\rho^{2-k-\epsilon}, \quad \textrm{ in }\Gamma_{\delta'}. %\forall k, l\in\mathbb{N},  \ 0<\epsilon<1/2,\  x\in \Gamma_{\delta'}.
  \]
 Thus (\ref{eq3_1_0_1}) and (\ref{eq3_1_0_2}) hold for $p$ and $r_0$.

 \medskip

 \noindent 4. Now we show $u'$ can be decomposed as in (\ref{eq3_1_0_1}) with $h_0'$ satisfying (\ref{eq3_1_0_2}).
 Let
 \[
 w_0'(x'):=w_0^{\rho}e_{\rho}+w_0^{\phi}e_{\phi}, \quad \textrm{where } w_0^{\rho}=\frac{1}{2}c_0\rho t \textrm{ and } w_0^{\phi}=0.
 \]
  By direct computation, we have
 \[
   \Delta' w'_0(x')=\frac{c_0}{\rho}e_{\rho}=\nabla_{x'}p_0(x', 1), \quad \textrm{ in }D_{\delta}\setminus\{0\}.
 \]
 Extend $w_0'$ to be a $(-1)$-homogeneous vector field in $\Omega_{\delta}\setminus\{x'=0\}$. Let $v_0'=u'-w_0'$. Subtracting the above from (\ref{eq3_1_6}), and using $p=p_0+r_0$, we have
 \[
    \Delta' v_0'=-\partial^2_3 u'+u\cdot \nabla u'+\nabla_{x'}r_0=:g_0' \quad \textrm{ in }\Gamma_{\delta}.
 \]
 By (\ref{eq3_1_3}) and (\ref{eq3_1_4}), using the estimates with $\epsilon$ replaced by $\epsilon/2$ in the nonlinear term, as well as (\ref{eq3_1_5}) with $k$ replaced by $k+1$,
 we have, for any $k\in\mathbb{N}$ and $0<\epsilon<1/2$, that 
 \[
 |\nabla_{x'}^kg_0'|\le C\rho^{1-k-\epsilon}, \quad \textrm{ in }\Gamma_{\delta'}.%\forall k\in\mathbb{N},\ 0<\epsilon<1/2,\ x\in\Gamma_{\delta'}.
 \]
   Since $g_0'$ is axisymmetric, by Lemma \ref{lem2_2} and (\ref{eq3_E_1}), there exists an axisymmetric vector-valued function $h_0'(x')\in C^{\infty}(D_{\delta'}\setminus\{0\})$, satisfying, for any $k\in \mathbb{N}$ and $0<\epsilon<1/2$, that
 \begin{equation}\label{eq3_1_7}
\Delta' h_0'(x')=g_0'(x', 1), \textrm{ and }  |\nabla^k_{x'} h_0'|\le C\rho^{3-k-\epsilon},\quad \textrm{ in }D_{\delta'}\setminus\{0\}.
%\ x'\in D_{\delta'}\setminus\{0\}.
 \end{equation}
 Let $\tilde{v}_0'(x')=v_0'(x', 1)-h_0'(x')=u'(x', 1)-w_0'(x')-h_0'(x')$. Then $\Delta' \tilde{v}_0'=0$ in $D_{\delta'}\setminus\{0\}$ and   $\tilde{v}_0'$ is axisymmetric.  By Step 2 and the above, we have
 \[
 |u'(x', 1)|\le C\rho^{1-\epsilon}, \quad |w_0'|\le C\rho^{1-\epsilon}, \quad |h_0'(x')|\le C\rho^{3-\epsilon},  \quad \textrm{ in }D_{\delta'}\setminus\{0\}.%x'\in D_{\delta'}\setminus\{0\}.
 \]
  So  $|\tilde{v}_0'|\le C\rho^{1-\epsilon}$ in $D_{\delta'}\setminus\{0\}$. By the computation in (\ref{eq3_1_8})-(\ref{eq3_1_9}),
 this implies
 \[
 \tilde{v}_0^{\rho}=b_0^{\rho}\rho, \quad \tilde{v}_0^{\phi}=b_0^{\phi}\rho
\]
  for some constants $b_0^{\rho}, b_0^{\phi}\in \R$. Define
  \[
  u_0'(x')=w_0'(x')+\tilde{v}_0'(x')=\rho(\frac{1}{2}c_0t+b_0^{\rho})e_{\rho}+b_0^{\phi}\rho e_{\phi},\quad h_0'=u'-u_0', \quad \textrm{ on }\Gamma_{\delta}.
  \]
   Extend $u_0'$ and $h_0'$ to be $(-1)$-homogeneous vector fields in $\Omega_{\delta}\setminus\{x'=0\}$. By (\ref{eqE_0_4}) and (\ref{eq3_1_7}), we have, for all $k, l\in\mathbb{N}, 0<\epsilon<1/2$, that 
  \[
  |\partial_3^l\nabla^k_{x'} h_0'|\le C\rho^{3-k-\epsilon}, \quad \textrm{ in }\Gamma_{\delta'}. %\forall k, l\in\mathbb{N},\ 0<\epsilon<1/2,\ x\in \Gamma_{\delta'}.
  \]
  Thus $u'=u_0'+h_0'$ on $\Gamma_{\delta}$, and (\ref{eq3_1_0_1}) and (\ref{eq3_1_0_2}) hold for $u'$ and $h_0'$.

  \medskip

  \noindent 5. Finally, we use the divergence-free condition to determine $a_0^3$ and $b_0^3$.
    By (\ref{eq3_1_0_2}) and the divergence formula (\ref{eqE_7}), we have $|\dive h_0|\le C\rho^{2-\epsilon}$      on $\Gamma_{\delta'}$.
  Insert $u=u_0+h_0$ into $\dive u=0$, where $u_0^3=a_0^3t+b_0^3$, $u_0^{\rho}=\rho(\frac{1}{2}c_0t+b_0^{\rho})$ and $u_0^{\phi}=b_0^{\phi}\rho$, we have
  \[
    0= \dive u=\dive u_0+\dive h_0=\frac{1}{2}c_0+2(\frac{1}{2}c_0t+b_{0}^{\rho})-a_0^3-(a_0^3t+b_0^3)+O(\rho^{2-\epsilon}), \quad  \textrm{in }\Gamma_{\delta'}.
  \]
  Hence  $a_0^3=c_0$ and $b_0^3=2b_0^{\rho}-\frac{1}{2}c_0$. This implies $\dive u_0=0$. 
  
  The uniqueness of $c_0, d_0, b_0^{\rho}, b_0^{\phi}$ follows from (\ref{eq3_1_0_1}) and (\ref{eq3_1_0_2}) by sending $\rho\to 0$, and the uniqueness of $h_0, r_0$ then follows from (\ref{eq3_1_0_1}). The proof is finished.
\end{proof}

We now construct the formal expansion of a $(-1)$-homogeneous axisymmetric solution of (\ref{NS})  on the cross section $\{x_3=1\}\setminus\{x'=0\}$ with at most  Type II  singular behavior, associated with arbitrary parameters $(c_0, d_0, b_0^{\rho}, b_0^{\phi})\in\R^4$. Let $b_0^3=2b_0^{\rho}-\frac{1}{2}c_0$ and define
\begin{equation}\label{eq3_u_0}
    u_0^{\rho}=\rho(\frac{1}{2}c_0t+b_0^{\rho}), \ u_0^{\phi}=b_0^{\phi}\rho, \ u_0^3=c_0t+b_0^3, \ p_0=c_0t+d_0, \ \textrm{ on }\{x_3=1\}\setminus\{x'=0\}.
\end{equation}
We construct a sequence $\{(u_n, p_n)\}_{n=0}^{\infty}$ satisfying the recursive system (\ref{eq1_3}), such that each $u_n$ is $(-1)$-homogeneous, each $p_n$ is $(-2)$-homogeneous, and
 \begin{equation}\label{eq3_u}
 u^{\rho}_n=\rho^{2n+1}a^{\rho}_{n}(t), \quad u^{\phi}_n=\rho^{2n+1}a^{\phi}_{n}(t), \quad u^{3}_n=\rho^{2n}a^3_{n}(t), \quad p_n=\rho^{2n}c_{n}(t), \quad n\ge 0,
   \end{equation}
 on $\{x_3=1\}\setminus\{x'=0\}$, where $c_{n}(t)$ and $a^{j}_{n}(t)$, $j=\rho, \phi, 3$, are polynomials of $t$ satisfying (\ref{eq1_A2}).  In the following, we denote
\[
  W_n:=\sum_{m=0}^nu_m\cdot \nabla u_{n-m}, \quad W_{nm}:=u_m\cdot \nabla u_{n-m}.
\]
 \begin{lem}\label{lem3_2}
    Let $c_0, d_0, b_0^{\rho}, b_0^{\phi}\in \R$, and let $u_0$ be the $(-1)$-homogeneous vector field and $p_0$ be the $(-2)$-homogeneous function       defined by (\ref{eq3_u_0}) on $\{x_3=1\}\setminus\{ x'= 0\}$ with $b_0^3=2b_0^{\rho}-\frac{1}{2}c_0$. Then there exists a unique sequence  of axisymmetric functions $\{(u_n, p_n)\}_{n=0}^{\infty}$, with prescribed initial term $(u_0, p_0)$,       such that each $u_n$ is $(-1)$-homogeneous, each $p_n$ is $(-2)$-homogeneous,
   and $(u_n, p_n)$ has the form (\ref{eq3_u}) for every $n\ge 0$, and the sequence satisfies (\ref{eq1_3}) on $\{x_3=1\}\setminus\{x'=0\}$. Moreover,     $\dive u_n=0$     and (\ref{eq1_A2}) holds
      for all $n\ge 0$.
\end{lem}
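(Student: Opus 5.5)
Our plan is induction on $n$. The hypothesis at step $n$ is that $(u_m,p_m)$, $0\le m\le n$, are constructed, have the form (\ref{eq3_u}) with degree bounds (\ref{eq1_A2}), and satisfy $\dive u_m=0$. The base case $n=0$ is (\ref{eq3_u_0}): the degrees are at most $1$, and $\dive u_0=0$ is the computation in Step 5 of Lemma \ref{lem3_1}.

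For the inductive step we first record a structural observation. Let $\mathcal P_k$ denote polynomials in $t$ of degree at most $k$. On $\{x_3=1\}$, $\rho\partial_\rho$ maps $\rho^{j}\mathcal P_k$ into $\rho^j\mathcal P_k$. By (\ref{eqE_0_1}), $\partial_3^2$ applied to a $(-\alpha)$-homogeneous function $\rho^{2n}q(t)$ gives $\rho^{2n}\tilde q(t)$ with $\deg\tilde q\le\deg q$. By (\ref{eqE_6}), the $\rho$-component of $W_{nm}=u_m\cdot\nabla u_{n-m}$ has the form $\rho^{2n+1}\mathcal P_{n+2}$. The $\phi$-component has the same form, and the $3$-component has the form $\rho^{2n}\mathcal P_{n+2}$. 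In each case the degrees add: $(m+1)+(n-m+1)=n+2$. The divergence (\ref{eqE_7}) of such a field is of type $\rho^{2n}\mathcal P_{n+2}$.

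These observations give $f_n^p=\rho^{2n}F(t)$ with $\deg F\le n+2$. It remains to solve $-\frac1\rho\partial_\rho(\rho\partial_\rho p_{n+1})=\rho^{2n}F(t)$ with $p_{n+1}=\rho^{2n+2}c_{n+1}(t)$. Substituting, we get $-(\partial_t+2n+2)^2c_{n+1}=F$. Since $2n+2\neq0$, the operator $(\partial_t+2n+2)^2$ is invertible on $\mathcal P_k$, being triangular with nonzero diagonal. So $c_{n+1}$ exists, is unique in the polynomial class, and has degree at most $n+2$.

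The same procedure handles $u'_{n+1}$ and $u^3_{n+1}$. The pressure term $\nabla_{x'}p_{n+1}$ contributes $\rho^{2n+1}\mathcal P_{n+2}$ to $f_n'$. The vector equation (\ref{eqA_E_2}) with $u^j=\rho^{2n+3}a(t)$ becomes $\big((\partial_t+2n+3)^2-1\big)a=\rho^{-(2n+1)}f^j$. Its characteristic roots, as an operator on polynomials, are $2n+2$ and $2n+4$, both nonzero, so it is invertible. For $u^3_{n+1}=\rho^{2n+2}a^3(t)$ we again get $(\partial_t+2n+2)^2a^3=G$. Uniqueness follows because the homogeneous solutions within the class, such as $\rho^{-1},\rho,\ln\rho$ or constants, are not of the form $\rho^{2n+2}\times$polynomial or $\rho^{2n+3}\times$polynomial. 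Note that $p_{n+1}$ must be computed before $u'_{n+1}$, since it enters $f_n'$.

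The main obstacle is proving $\dive u_{n+1}=0$, which is not imposed directly. We will compute $\Delta'(\dive u_{n+1})$. Since $\Delta'$ commutes with $\partial_3$ and with $\dive_{x'}$ on the Cartesian components, we get
\[
\Delta'\dive u_{n+1}=\dive f_n'+\partial_3 f_n^3 .
\]
Expanding this yields $-\partial_3^2(\dive u_n)+\dive W_n+\Delta'p_{n+1}+\partial_3^2p_n$. The first term vanishes by induction, and the remaining three cancel by the pressure equation. Hence $D:=\dive u_{n+1}$ satisfies $\Delta'D=0$. On the other hand, (\ref{eqE_7}) shows $D=\rho^{2n+2}\times$polynomial in $t$. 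Harmonicity then forces $(\partial_t+2n+2)^2$ of the polynomial to vanish, so the polynomial is zero and $D=0$.

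The delicate part is checking that $\partial_3$ of the homogeneous extension commutes correctly with the recursive definitions. Each $u_m$ is genuinely $(-1)$-homogeneous by construction, so this holds, and the identity $\dive W_n$ used above is exactly the one appearing in $f_n^p$. Finally, the degree bound $\deg\le n+2=(n+1)+1$ closes the induction.
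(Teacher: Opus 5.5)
Your proposal is correct and follows essentially the same route as the paper. You reduce each equation in (\ref{eq1_3}) to a constant-coefficient operator in $t$ that is invertible on polynomials, solving for $p_{n+1}$ before $u'_{n+1}$, and you track degrees to get (\ref{eq1_A2}). You then obtain $\dive u_{n+1}=0$ from $\Delta'\dive u_{n+1}=0$ together with the form $\rho^{2n+2}\times$polynomial. The only cosmetic difference is in that last step: you use injectivity of $(\partial_t+2n+2)^2$ on polynomials, whereas the paper writes the radial harmonic function as $C_1\ln\rho+C_2$ and lets $\rho\to0$.
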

\begin{proof}
We construct the sequence recursively on $\{x_3=1\}\setminus\{x'=0\}$. By definition, $(u_0, p_0)$ is in the form (\ref{eq3_u}) and satisfies  (\ref{eq1_A2}) for $n=0$. A direct computation using $b_0^3=2b_0^{\rho}-\frac{1}{2}c_0$ gives $\dive u_0=0$.
Let $n\ge 0$, and assume that $(u_m, p_m)$ have been constructed for all $0\le m\le n$, where each $u_m$ is $(-1)$-homogeneous, each $p_m$ is $(-2)$-homogeneous, and $(u_m, p_m)$ is in the form (\ref{eq3_u}) on $\{x_3=1\}\setminus\{x'=0\}$, satisfying (\ref{eq1_A2}).  We also assume that the recursive system (\ref{eq1_3}) on $\{x_3=1\}\setminus\{x'=0\}$ has been satisfied for indices $0, ..., n-1$.
We now construct $(u_{n+1}, p_{n+1})$ in the form (\ref{eq3_u}) on $\{x_3=1\}\setminus\{x'=0\}$ so that  (\ref{eq1_3}) holds for index $n$.

\medskip

\noindent 1. Recall we denote $W_n=\sum_{m=0}^nu_m\cdot \nabla u_{n-m}$.
By the form (\ref{eq3_u}) for $u_m$, $0\le m\le n$,  and (\ref{eqE_6}),  direct computation gives
  \[
    W_n^{\rho}=\rho^{2n+1}B_{n}^{\rho}(t), \quad W_n^{\phi}=\rho^{2n+1}B_{n}^{\phi}(t),\quad W_n^3=\rho^{2n}B_{n}^3(t),\quad \textrm{ on }\{x_3=1\}\setminus\{x'=0\},
        \]
    where  each $B_n^j(t)$ is a polynomial, given by
  \begin{equation}\label{eq3_2_1}
    \begin{split}
        & B_{n}^{\rho}(t)=\sum_{m=0}^n\left((a_{m}^{\rho}-a_{m}^3)((a_{n-m}^{\rho})'+(2(n-m)+2)a_{n-m}^{\rho})
   -a_{m}^{\rho}a_{n-m}^{\rho}-a_{m}^{\phi}a_{n-m}^{\phi})\right), \\
     & B_{n}^{\phi}(t)=\sum_{m=0}^n\left((a_{m}^{\rho}-a_{m}^3)((a_{n-m}^{\phi})'+(2(n-m)+2)a_{n-m}^{\phi})\right),\\
   & B_{n}^3(t)=\sum_{m=0}^n\left((a_{m}^{\rho}-a_{m}^3)((a_{n-m}^{3})'+(2(n-m)+1)a_{n-m}^{3})-a_{m}^{\rho}a_{n-m}^3\right).
   \end{split}
    \end{equation}
    In the formula for $B_{n}^{\phi}$, the terms $-a_m^{\rho}a_{n-m}^{\phi}+a_m^{\phi}a_{n-m}^{\rho}$ cancel after summing over $m$. By induction hypothesis,  in (\ref{eq3_2_1}) we have
     $\deg a_m^j\le m+1$ and $\deg a_{n-m}^j\le n-m+1$, $j=\rho, \phi, 3$.
     Since each $B_n^j$ is a finite sum of products of such polynomials and their derivatives, we have
    \begin{equation}\label{eq4_I_7}
    \deg B_{n}^{j}\le n+2,     \quad j=\rho, \phi, 3.
    \end{equation}

   \noindent 2.  Now we solve the first equation in (\ref{eq1_3}) for  $p_{n+1}$ in the form of (\ref{eq3_u}). The equation is
         \be\label{eq3_2_p}
      -\Delta' p_{n+1}=f_{n}^p=\partial_3^2p_n+\dive W_n, \quad \textrm{ on }\{x_3=1\}\setminus\{x'=0\}.
    \ee
    Since $W_n$ is $(-3)$-homogeneous, it follows from (\ref{eqE_0_1}) with $\alpha=3$ that
    \[
      \dive W_n=\frac{1}{\rho}\partial_{\rho}(\rho W_n^{\rho})+\partial_3W_n^3=\frac{1}{\rho}\partial_{\rho}(\rho W_n^{\rho})-\frac{1}{\rho^2}\partial_{\rho}(\rho^3W_n^3)=\rho^{2n}D_{n}(t),
    \]
    where
    \[
      D_{n}=\partial_tB_{n}^{\rho}+(2n+2)B_{n}^{\rho}-\partial_tB^3_{n}-(2n+3)B^3_{n}.
    \]
    Since $p_n$ is $(-2)$-homogeneous, using (\ref{eqE_0_1}) with $\alpha=2$ and the above, we have
        \[
        f_n^p=\partial_3^2p_n+\dive W_n=\rho^{2n}C_{n}(t), \quad \textrm{ on }\{x_3=1\}\setminus\{x'=0\},
        \]
         where
     \begin{equation}\label{eq3_2_2}
      C_{n}(t)=(\partial_t^2+(4n+5)\partial_t+(2n+2)(2n+3))[c_n(t)]+D_n.
    \end{equation}
    Let $p_{n+1}=\rho^{2(n+1)}c_{n+1}(t)$ with a polynomial $c_{n+1}(t)$. Then (\ref{eq3_2_p})
     is reduced to
    \be\label{eq3_2_3}
           -(\partial_t^2+4(n+1)\partial_t+4(n+1)^2)[c_{n+1}(t)]=C_n(t).
               \ee
               The zero-order coefficient of the differential operator on the left-hand side is $4(n+1)^2\ne 0$. For every $d\ge 0$, the operator is invertible on the space of polynomials of degree at most $d$.
    Thus there exists a unique polynomial solution $c_{n+1}(t)$. %, and $\deg c_{n+1}\le \deg C_n$. 
    
    Since $D_n$ is obtained from $B_n^j$ and their derivatives, by (\ref{eq4_I_7}) we have $\deg D_{n}\le n+2$. By the induction assumption, $\deg c_n\le n+1$. By this and (\ref{eq3_2_2}) we have $\deg C_n\le n+2$.  Hence the polynomial solution $c_{n+1}$  of  (\ref{eq3_2_3}) satisfies  $\deg c_{n+1}\le \deg C_n\le n+2$.

\medskip

    3. Next, we solve the second equation in (\ref{eq1_3}) for $u'_{n+1}=u^{\rho}_{n+1}e_{\rho}+u^{\phi}_{n+1}e_{\phi}$ with $u_{n+1}^{\rho}, u_{n+1}^{\phi}$ of the form of (\ref{eq3_u}). The equation is
      \[
      \Delta' u'_{n+1}=f_n'=-\partial_3^2u_n'+W_n'+\nabla_{x'}p_{n+1}, \quad \textrm{ on }\{x_3=1\}\setminus\{x'=0\}.
    \]
    Write $f_n'=f_n^{\rho}e_{\rho}+f_n^{\phi}e_{\phi}$ and the above equations as
    \be\label{eq3_2_u'}
     \frac{1}{\rho}\partial_{\rho}(\rho\partial_{\rho}u^j_{n+1})-\frac{u_{n+1}^{j}}{\rho^2}=f^{j}_n, \quad j=\rho, \phi.
         \ee
     By the homogeneity of $u_n$ and $p_{n+1}$, together with (\ref{eqE_0_1}) and (\ref{eq3_2_1}), we have
 \[
   f_n^{\rho}=-\partial_3^2u_n^{\rho}+W_n^{\rho}+\partial_{\rho}p_{n+1}=\rho^{2n+1}A_n^{\rho}(t), \quad f_n^{\phi}=-\partial_3^2u_n^{\phi}+W_n^{\phi}=\rho^{2n+1}A_n^{\phi}(t),
 \]
 where
     \begin{equation}\label{eq3_2_6}
    \begin{split}
       & A_n^{\rho}(t)= -(\partial_t^2+(4n+5)\partial_t+(2n+2)(2n+3))[a_n^{\rho}(t)]+B_{n}^{\rho}+(\partial_t+2n+2)[c_{n+1}(t)], \\
       & A_n^{\phi}(t)=-(\partial_t^2+(4n+5)\partial_t+(2n+2)(2n+3))[a_n^{\phi}(t)]+B_{n}^{\phi}.
      \end{split}
    \end{equation}
    Let $u^{j}_{n+1}=\rho^{2(n+1)+1}a_{n+1}^{j}(t)$ with polynomials $a_{n+1}^j$, $j=\rho, \phi$. Then the equations in (\ref{eq3_2_u'})   are reduced to
    \be\label{eq3_2_7}
     (\partial_t^2+2(2n+3)\partial_t+(2n+3)^2-1)[a_{n+1}^j]=A_n^j(t), \quad j=\rho, \phi.
    \ee
    Since $(2n+3)^2-1\ne 0$, the same argument as for (\ref{eq3_2_3}) shows that, for each $j=\rho, \phi$, there exists a unique polynomial solution $a_{n+1}^j$ of (\ref{eq3_2_7}). %, and $\deg a^j_{n+1}\le \deg A^j_n$.

By Step 2, we have $\deg c_{n+1}\le n+2$. By this, the induction assumption that $\deg a_n^{j}\le n+1$, $j=\rho, \phi$, together with (\ref{eq3_2_6}) and (\ref{eq4_I_7}), we have $\deg A_n^{j}\le n+2$ for $j=\rho, \phi$. So %Then by (\ref{eq3_2_7}) we have
  $\deg a^{j}_{n+1}\le \deg A_n^j\le n+2$ for $j=\rho, \phi$.

\medskip

   4. Now we solve the third equation in (\ref{eq1_3}) for $u^3_{n+1}$ of the form of (\ref{eq3_u}). The equation is
    \be\label{eq3_2_u3}
      \Delta' u^3_{n+1}=f_{n}^3=-\partial_3^2u_n^3+W_n^3+\partial_3p_n, \quad \textrm{ on }\{x_3=1\}\setminus\{x'=0\}.
    \ee
    By the homogeneity of $u_n$ and $p_n$, together with (\ref{eqE_0_1}) and (\ref{eq3_2_1}), we have
    \[
    f_n^3
    =\rho^{2n}A_n^3(t), \quad \textrm{ on }\{x_3=1\}\setminus\{x'=0\},
    \]
     where
     \begin{equation}\label{eq3_2_4}
      A_n^3(t)=-(\partial_t^2+(4n+3)\partial_t+(2n+2)(2n+1))[a_n^3(t)]+B_{n}^3-(\partial_t+2n+2)[c_{n}(t)].
    \end{equation}
    Let $u^3_{n+1}=\rho^{2(n+1)}a^3_{n+1}(t)$ with a polynomial $a^3_{n+1}(t)$. Then (\ref{eq3_2_u3})  is reduced to
    \be\label{eq3_2_5}
           (\partial_t^2+4(n+1)\partial_t+4(n+1)^2)[a^3_{n+1}(t)]=A^3_n(t).
    \ee
    Since $4(n+1)^2\ne 0$, the same argument as for (\ref{eq3_2_3}) shows that (\ref{eq3_2_5}) has a unique polynomial solution $a^3_{n+1}(t)$. By the induction assumption and (\ref{eq4_I_7}), we have $\deg A^3_n\le n+2$,  and thus $\deg a^3_{n+1}\le \deg A^3_n\le n+2$.

    So we have constructed $(u_{n+1}, p_{n+1})$ of the form (\ref{eq3_u}) satisfying (\ref{eq1_A2}) and (\ref{eq1_3}) for index $n$ on $\{x_3=1\}\setminus\{x'=0\}$. Extend $u_{n+1}$ to be $(-1)$-homogeneous and $p_{n+1}$ to be $(-2)$-homogeneous in $\{x_3>0\}\setminus\{x'=0\}$.
 By induction, this constructs a unique sequence $\{(u_n, p_n)\}_{n=0}^{\infty}$ in the form (\ref{eq3_u}) satisfying (\ref{eq1_3}) on $\{x_3=1\}\setminus\{x'=0\}$.

      \medskip

  \noindent 5.   We show the sequence constructed above satisfies $\dive u_n=0$
    for all $n\ge 0$. By the definition of $u_0$ and the relation $b_0^3=2b_0^{\rho}-\frac{1}{2}c_0$, we have $\dive u_0=0$.
    Assume $\dive u_m=0$ for all $0\le m\le n$. Taking $\nabla_{x'}\cdot$ of the second equation and $\partial_3$ of the third equation of (\ref{eq1_3}) and adding them together, using the definitions of $f_n'$ and $f_n^3$, the first equation in (\ref{eq1_3}) and the induction hypothesis $\dive u_n=0$,  we have
\[
\begin{split}
  \Delta' \dive u_{n+1} & =\dive \Delta' u_{n+1}  =\nabla_{x'}\cdot f_n'+\partial_3f_n^3
      =-\partial_3^2\dive u_n+\dive W_n+\partial^2_3p_n+\Delta'p_{n+1}
            =0,
      \end{split}
\]
   on $\{x_3=1\}\setminus\{x'=0\}$. Hence $\dive u_{n+1}(x', 1)$ is a harmonic function in $\R^2\setminus\{0\}$.    On the other hand, $u_{n+1}(x', 1)$ is in the form (\ref{eq3_u}), where
   \[
   u_{n+1}^{j}(x', 1)=\rho^{2n+3}a_{n+1}^{j}(t), \quad j=\rho, \phi, \quad u_{n+1}^{3}(x', 1)=\rho^{2n+2}a_{n+1}^{3}(t).
   \]
   Since $u_{n+1}$ is $(-1)$-homogeneous, by (\ref{eqE_7}), we have
      \begin{equation*}
   \begin{split}
   \dive u_{n+1}(x', 1) & =\frac{1}{\rho}\partial_{\rho}(\rho u_{n+1}^{\rho})-\partial_{\rho}(\rho u_{n+1}^3)\\
    & =\rho^{2n+2}(\partial_ta_{n+1}^{\rho}+(2n+4)a_{n+1}^{\rho}-\partial_ta_{n+1}^{3}-(2n+3)a_{n+1}^{3}).
   \end{split}
   \end{equation*}
  Thus $\dive u_{n+1}(x', 1)=\rho^{2n+2}Q(t)$ for some polynomial $Q(t)$.
    Since $\dive u_{n+1}(x', 1)$  is radial and harmonic in $\R^2\setminus\{0\}$, we also have $\dive u_{n+1}(x', 1)=C_1\ln\rho+C_2$ for some constants $C_1, C_2$.  So $\rho^{2n+2}Q(t)=C_1t+C_2$ with $n\ge 0$. Sending $\rho\to 0$, equivalently $t\to -\infty$, we obtain $C_1=C_2=0$ and hence $\dive u_{n+1}(x', 1)=0$.
        By homogeneity, $\dive u_{n+1}=0$ in $\{x_3>0\}\setminus\{x'=0\}$. 
        The proof is finished.
\end{proof}

Next, we show that the infinite sum of $\{(u_n, p_n)\}_{n=0}^{\infty}$ constructed in Lemma \ref{lem3_2} converges near $x'=0$ on $\{x_3=1\}$.
After homogeneous extension, it defines a local $(-1)$-homogeneous axisymmetric solution of (\ref{NS}) near the positive $x_3$-axis.

\begin{lem}\label{lem4_M_1}
Let   $K\subset \R^4$ be compact. For each $\lambda=(c_0, d_0, b_{0}^{\rho}, b_{0}^{\phi})\in K$,
let  $\{(u_{n}, p_{n})\}_{n=0}^{\infty}$ be the associated sequence constructed by Lemma \ref{lem3_2}.

\noindent (i) There exists $\delta>0$, depending only on $K$,  such that for every compact set $V\subset\subset \Omega_{\delta}\setminus\{x'=0\}$,  the series
  \[
    u_{\lambda}=\sum_{n=0}^{\infty}u_{n}, \quad p_{\lambda}=\sum_{n=0}^{\infty}p_{n},
  \]
  converge in $C^2(V)$ uniformly for $\lambda\in K$,
    and $(u_{\lambda}, p_{\lambda})$ is a smooth $(-1)$-homogeneous axisymmetric solution of  (\ref{NS}) in $\Omega_{\delta}\setminus\{x'=0\}$, satisfying $|u_{\lambda}|=o(|x'|^{-1})$ as $x'\to 0$ for each fixed $x_3>0$, and (\ref{eq1_a_3}).

       \medskip

     \noindent (ii)
     For the $\delta$ given in (i) and every integer $L\ge 2$,  there exists a constant $C>0$, depending only on $L$ and $K$, such that for any multi-index $\beta\in \mathbb{N}^4$  %and $k\in \mathbb{N}$ satisfying $0\le |\beta|+k\le L$, %
     satisfying $|\beta|\le L$, any $0\le k\le L$, 
     and any $x\in \Gamma_{\delta}$,
        \be\label{eqprop4_3}
         |\partial_{\rho}^k\partial_{\lambda}^{\beta}u'_{\lambda}|
         \le \left\{
         \begin{array}{ll}
          C\rho^{-k+1}(|\ln\rho|+1), &  \textrm{if }k=0, 1, \\
          C\rho^{-k+1}, & \textrm{if }k\ge 2,
         \end{array}
         \right.
       \ee
       \be\label{eqprop4_2}
         |\partial_{\rho}^k\partial_{\lambda}^{\beta}u^3_{\lambda}|+|\partial_{\rho}^k\partial_{\lambda}^{\beta}p_{\lambda}|
                  \le \left\{
         \begin{array}{ll}
          C(|\ln\rho|+1), &  \textrm{if }k=0, \\
          C\rho^{-k}, & \textrm{if }k\ge 1.
         \end{array}
         \right.
       \ee
\end{lem}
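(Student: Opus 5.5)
The approach is a majorant (Cauchy--Kovalevskaya type) argument on the polynomial coefficients produced in Lemma \ref{lem3_2}, combined with the weighted inversion estimates of Lemmas \ref{lem2_1} and \ref{lem2_2}. Rather than bounding the polynomials $a_n^j, c_n$ coefficient by coefficient, we bound the functions $u_n, p_n$ on $\Gamma_{\delta_0}$ directly. Fix a small $\epsilon\in(0,1/2)$. The inductive hypothesis is
\[
|\partial_\rho^k u_n'|\le M_n (2n+1)^{k}\rho^{2n+1-k-\epsilon},\qquad |\partial_\rho^k u_n^3|+|\partial_\rho^k p_n|\le M_n(2n+1)^k\rho^{2n-k-\epsilon},
\]
for $0\le k\le L+2$ and $n\ge1$, on $0<\rho<1$. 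The logarithms are absorbed into $\rho^{-\epsilon}$, since $\rho^{2n}|t|^{n+1}$ is of this size up to factors that we track. Because $u_n$ and $p_n$ are exactly the solutions of (\ref{eq1_3}) of the form (\ref{eq3_u}), they vanish with their first derivatives as $\rho\to0$ for $n\ge1$. Hence they coincide with the solutions normalized at $0$ in the ``in particular'' parts of Lemma \ref{lem2_1} (with $\alpha=2n-\epsilon>\alpha_0$) and Lemma \ref{lem2_2}. This yields the gain $(|\alpha|+1)^{-2}\sim (2n)^{-2}$ when passing from $f_n$ to $u_{n+1}, p_{n+1}$.

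The key step is estimating $f_n^p, f_n', f_n^3$ from (\ref{eq1_3}). The terms $\partial_3^2 p_n$, $\partial_3^2u_n$ and $\partial_3 p_n$ are computed via (\ref{eqE_0_1}). They involve $\partial_\rho(\rho^2\partial_\rho(\rho^\alpha f))$, which loses two powers of $\rho$ relative to $\rho^{2n}$, hence the index shift $n\to n+1$. It also costs a factor $O(n^2)$, which is exactly compensated by the $(2n)^{-2}$ gain above. This compensation is precisely why the linear part does not cause growth beyond $C^n$. The bilinear term $\sum_m u_m\cdot\nabla u_{n-m}$ is treated with (\ref{eqE_6}), giving $\sum_m M_mM_{n-m}(n+1)\rho^{2n-\ldots}$. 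We then define a majorant recursion
\[
M_{n+1}=\frac{A}{(n+1)^2}\Big(n^2M_n+(n+1)\sum_{m=0}^n M_mM_{n-m}\Big)+\ldots,
\]
with the pressure contribution $\nabla_{x'}p_{n+1}$ inserted first into $f_n'$. By comparison with a generating function $\sum M_n z^n$ satisfying a quadratic ODE/algebraic equation, or directly by induction on $M_n\le B^n/(n+1)^2$, we obtain $M_n\le C_K B^n$ with $B$ depending only on $K$. The $n=0$ term carries $|t|$ rather than $\rho^{-\epsilon}$, and is handled separately, explicitly.

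With $\delta<B^{-1/2}$, the series $\sum\rho^{2n}B^n$ converges uniformly on $\Gamma_{\delta}$ together with $\rho$-derivatives up to order $L+2$. Homogeneity and (\ref{eqE_0_4}) transfer this to $C^2(V)$ convergence for compact $V\subset\subset\Omega_\delta\setminus\{x'=0\}$. Summing (\ref{eq1_3}) over $n$ shows that $(u_\lambda,p_\lambda)$ satisfies the momentum equations, since the sums telescope into $-\Delta u+u\cdot\nabla u+\nabla p=0$ written as $\Delta'+\partial_3^2$. The divergence-free property follows from $\dive u_n=0$. Smoothness follows from Lemma \ref{lemL_1}. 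The asymptotics (\ref{eq1_a_3}) and $|u|=o(\rho^{-1})$ follow because the tail $\sum_{n\ge1}$ is $O(\rho^{2-\epsilon})$.

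For (ii), dependence on $\lambda$ is polynomial in each $u_n$, since the recursion is polynomial in $\lambda$. Differentiating the recursion in $\lambda$ gives the same linear structure with lower-order source terms, so the majorant argument applies to $\partial_\lambda^\beta u_n$ with constants depending on $|\beta|\le L$ and $K$. The bound for $n=0$, computed from (\ref{eq3_u_0}), gives the logarithmic terms in (\ref{eqprop4_3})--(\ref{eqprop4_2}) for $k=0,1$. For $k\ge2$ the logarithm is killed in $u_0'$ because $\partial_\rho^2(\rho t)=1/\rho$, and in $u_0^3, p_0$ for $k\ge1$. The tail contributes $O(\rho^{2-\epsilon-k})$, which is dominated. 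The main obstacle is the precise bookkeeping of the $n$-dependent constants: verifying that the two-derivative loss in $\partial_3^2$ costs only $O(n^2)$ per step, uniformly for $k\le L+2$, so that the majorant recursion closes geometrically.
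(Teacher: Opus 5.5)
\textbf{There is a genuine gap.} Your inductive hypothesis cannot close. You allow one fixed loss $\rho^{-\epsilon}$ at every order, e.g.\ $|\partial_\rho^k u_n'|\le M_n(2n+1)^k\rho^{2n+1-k-\epsilon}$, but every step of the recursion couples $u_n$ to the logarithmic leading term $u_0$.

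\emph{Where it fails.} The $m=0$ and $m=n$ terms of $W_n'$ contain $u_0^{\rho}\partial_\rho u_n^{\rho}$, $u_0^3\partial_3u_n'$ and $u_n^3\partial_3u_0^{\rho}$. On $\{x_3=1\}$ we have $u_0^3=c_0t+b_0^3$, $u_0^\rho=\rho(\frac{1}{2}c_0t+b_0^\rho)$ and $\partial_3u_0^\rho=-\rho(c_0t+2b_0^\rho+\frac{1}{2}c_0)$. Under your hypothesis these terms are bounded only by $CnM_n\rho^{2n+1-\epsilon}(|t|+1)$. Lemma \ref{lem2_2} then returns $u_{n+1}'$ of size $\rho^{2n+3-\epsilon}(|t|+1)$, or $\rho^{2n+3-2\epsilon}$ after using $|t|+1\le C\rho^{-\epsilon}$. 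Neither is bounded by $M_{n+1}\rho^{2n+3-\epsilon}$ on $(0,\delta)$, whatever $M_{n+1}$ is. The same happens for $u^3_{n+1}$ and $p_{n+1}$.

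\emph{The logarithms really accumulate.} For example, $a_1^3$ has $t^2$-coefficient $-c_0^2/4$, and generically $\deg a_n^j=n+1$. Treating the $n=0$ term ``separately, explicitly'' does not help, because the problematic terms are the $u_0$--$u_n$ interactions at every level $n$. The ``factors that we track'' needed to absorb $|t|^{n+1}$ into a single $\rho^{-\epsilon}$ are $\sup_{0<\rho<1}\rho^{\epsilon}|\ln\rho|^{n+1}=\big((n+1)/(e\epsilon)\big)^{n+1}$. These grow faster than any geometric sequence and appear nowhere in your majorant recursion. Inserting them naively destroys the conclusion $M_n\le C_KB^n$.

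\textbf{The missing idea} is to let the admissible loss grow linearly with the order. The paper's Claim is
$|\partial_\rho^k\partial_\lambda^\beta u_n'|\le n^kM_0^n\rho^{(2n+1)(1-\epsilon)-k}$,
$|\partial_\rho^k\partial_\lambda^\beta u_n^3|\le n^kM_0^n\rho^{2n(1-\epsilon)-k}$,
$|\partial_\rho^k\partial_\lambda^\beta p_n|\le n^{k+1}M_0^n\rho^{2n(1-\epsilon)-k}$,
with $\delta$ chosen so that $|\ln\rho|<\rho^{-\epsilon}$ on $(0,\delta)$.

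Each interaction with $u_0$ then costs exactly one factor $\rho^{-\epsilon}$, which is precisely the slack available at the next order, since $(2n+1)(1-\epsilon)-\epsilon=(2n+2)(1-\epsilon)-1$. The products $u_m\cdot\nabla u_{n-m}$ with $1\le m\le n-1$ fit automatically because the exponents add.

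From there the argument is close to yours:
\begin{enumerate}
\item The $(|\alpha|+1)^{-2}\sim n^{-2}$ gain from Lemmas \ref{lem2_1}--\ref{lem2_2} compensates the polynomial factors from $\partial_3^2$, $\dive$ and the sum over $m$. This uses $n\ge N$ with $2n(1-\epsilon)-1>n$, so that $|\alpha|+1>n$; the finitely many $n\le N$ are bounded directly.
\item The bound $M_0^mM_0^{n-m}=M_0^n$ leaves one spare factor of $M_0$ that absorbs all constants, so no generating function is needed.
\item Convergence requires $M_0\delta^{2(1-\epsilon)}<1$ rather than $B\delta^2<1$.
\end{enumerate}
Alternatively, you could keep exact powers $\rho^{2n+1}$ and carry $(1+|t|)^{n+1}$ explicitly. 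That route requires log-weighted versions of Lemmas \ref{lem2_1}--\ref{lem2_2}, which you do not have.

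\textbf{A smaller point on (ii).} The majorant constant for $L$ derivatives depends on $L$. Your series argument therefore gives (\ref{eqprop4_3})--(\ref{eqprop4_2}) only on some $\Gamma_{\delta_L}$, with $\delta_L$ possibly smaller than the $\delta$ fixed in (i). The paper covers $\delta_L\le\rho<\delta$ by treating (\ref{eqNC_C}) as a regular ODE system there. Its Cauchy data at $\rho=\delta_L/2$ are bounded uniformly and depend smoothly on $\lambda\in K$. You need an argument of this kind as well.
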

\begin{proof}
1. Let $L\ge 2$. By enlarging $K$ slightly, we may assume that the construction is carried out on a compact set $K_1$ containing $K$ in its interior. We continue to write $K$ for this enlarged compact set.
 Fix once and for all a universal constant $0<\epsilon<1/2$,
 and let $0<\delta<1$ be chosen later such that  $|\ln \rho|<\rho^{-\epsilon}$ for all $0<\rho\le \delta$.
Throughout the proof, $C>0$ denotes a constant depending only on $K$ and $L$ unless otherwise specified, which may vary from line to line. The dependence of constants on the fixed $\epsilon$ will be suppressed.
 By the form of $(u_0, p_0)$ in (\ref{eq3_u_0}) on $\Gamma_{\delta}$,  and using the homogeneous extension together with (\ref{eqE_0_1}) repeatedly to estimate the $\partial_3$-derivatives,
 there is a constant $C_0>0$, depending only on $L$ and $K$, such that for $\lambda\in K$, $k, l\in\mathbb{N}$ with $k+l\le L$, $|\beta|\le L$,
  and $x\in \Gamma_{\delta}$,
  \begin{equation}\label{eq4_1_0}
    \begin{split}
     & |\partial_3^l\partial_{\lambda}^{\beta}u_0'|\le C_0\rho(|\ln\rho|+1),  \ |\partial_{\rho}\partial_3^l\partial_{\lambda}^{\beta}u_0'|\le C_0(|\ln\rho|+1),  \ |\partial_{\rho}^k\partial_3^l\partial_{\lambda}^{\beta}u_0'|\le C_0\rho^{1-k},\textrm{ for }k\ge 2,\\
     & |\partial_3^l\partial_{\lambda}^{\beta}u_0^3|+|\partial_3^l\partial_{\lambda}^{\beta}p_0|\le C_0(|\ln\rho|+1), \ |\partial_{\rho}^k\partial_3^l\partial_{\lambda}^{\beta}u_0^3|+ |\partial_{\rho}^k\partial_3^l\partial_{\lambda}^{\beta}p_0|\le C_0\rho^{-k}, \textrm{ for }k\ge 1.
     \end{split}
  \end{equation}

\noindent 2.  \textbf{Claim}: There exists a constant
  $M_0>0$,  depending only on   $K$ and $L$,
  such that for all $n\ge 1$,
  $0\le k\le L$, $|\beta|\le L$,
  and  $x\in \Gamma_{\delta}$,
   \begin{equation}\label{eq4_M}
   \begin{split}
    & |\partial_{\rho}^k\partial_{\lambda}^{\beta}u_{n}'|\le n^{k}M_0^n\rho^{(2n+1)(1-\epsilon)-k}, \\
     & |\partial_{\rho}^k\partial_{\lambda}^{\beta}u_{n}^3|\le n^{k}M_0^n\rho^{2n(1-\epsilon)-k},\\
      & |\partial_{\rho}^k\partial_{\lambda}^{\beta}p_{n}|\le n^{k+1}M_0^n\rho^{2n(1-\epsilon)-k}.
    \end{split}
  \end{equation}
  \emph{Proof of Claim}:
  Choose an integer $N\ge L\ge 2$ sufficiently large such that $2n(1-\epsilon)-1>n$ for all $n\ge N$.   By the recursive construction of $(u_{n}, p_{n})$, all coefficients of the polynomials $a_n^j(t)$ and $c_n(t)$ depend polynomially on $\lambda$.  By this, the form of $(u_{n}, p_{n})$ in (\ref{eq3_u}), the degree bound (\ref{eq1_A2}) for the coefficients, the choice of $\delta$ that $|\ln\rho|<\rho^{-\epsilon}$ for $0<\rho<\delta$, and the compactness of $K$, there exists a constant $\bar{M}_0>1$, depending only on $K$ and $L$,   such that (\ref{eq4_M}) holds on $\Gamma_{\delta}$  for $1\le n\le N$, with $M_0$ replaced by $\bar{M}_0$.   Let $n\ge N$, and   let $M_0>\bar{M}_0$ be determined later.
   Assume (\ref{eq4_M}) holds for all  $1\le m\le n$. We show it also holds for $n+1$.

 Recall we denote $W_n=\sum_{m=0}^nW_{nm}$, where $W_{nm}=u_m\cdot \nabla u_{n-m}$. For $|\beta|\le L$,  applying  $\partial_{\lambda}^{\beta}$ to (\ref{eq1_3}), we obtain
  \be\label{eq4_M_E}
    \left\{
       \begin{split}
        & -\Delta' \partial_{\lambda}^{\beta}p_{n+1}=\partial_{\lambda}^{\beta}f_n^p=\partial_3^2 \partial_{\lambda}^{\beta}p_n+\partial_{\lambda}^{\beta} \dive W_n, \\
         & \Delta' \partial_{\lambda}^{\beta}u'_{n+1}=\partial_{\lambda}^{\beta}f_n'=-\partial_3^2\partial_{\lambda}^{\beta}u_n'+\partial_{\lambda}^{\beta}W_n'
         +\nabla_{x'} \partial_{\lambda}^{\beta}p_{n+1},\\
         & \Delta' \partial_{\lambda}^{\beta}u^3_{n+1}=\partial_{\lambda}^{\beta}f_n^3=-\partial^2_3\partial_{\lambda}^{\beta}u^3_n+\partial_{\lambda}^{\beta}W_n^3+\partial_3\partial_{\lambda}^{\beta}p_n,
       \end{split}
    \right.
 \ee
 on $\Gamma_{\delta}$. 
   We will estimate $\partial_{\rho}^k\partial_{\lambda}^{\beta}f_{n}^p, \partial_{\rho}^k\partial_{\lambda}^{\beta}f_{n}', \partial_{\rho}^k\partial_{\lambda}^{\beta}f_{n}^3$ for $0\le k\le L-2$
    using the induction hypothesis (\ref{eq4_M}), and then apply Lemma \ref{lem2_1} and Lemma \ref{lem2_2} to obtain the     corresponding estimates of derivatives of $(u_{n+1}, p_{n+1})$ up to order $L$.
Note $u_{n}$ is $(-1)$-homogeneous and $p_{n}$ is $(-2)$-homogeneous for any $n$.
By (\ref{eqE_0_4}) and the induction hypothesis (\ref{eq4_M}), we have,  for $1\le m\le n$, $k, l\ge 0$ satisfying $k+l\le L$, $|\beta|\le L$ and $x\in \Gamma_{\delta}$, that
\begin{equation}\label{eq4_M_1}
\begin{split}
& |\partial_{\rho}^k\partial_3^l\partial_{\lambda}^{\beta}u_m'|\le Cm^{k+l}M_0^{m}\rho^{(2m+1)(1-\epsilon)-k},\\
&   |\partial_{\rho}^k\partial_3^l\partial_{\lambda}^{\beta}u_m^3|\le Cm^{k+l}M_0^{m}\rho^{2m(1-\epsilon)-k},\\
&  |\partial_{\rho}^k\partial_3^l\partial_{\lambda}^{\beta}p_{m}|\le Cm^{k+l+1}M_0^{m}\rho^{2m(1-\epsilon)-k}.
\end{split}
\end{equation}
Below we use the index numbers $k_1, k_2, k_3\in\mathbb{N}$ and $\beta_1, \beta_2\in \mathbb{N}^4$.

\medskip

\noindent Step 2-1. We first estimate the nonlinear term $W_n$ and its derivatives. In particular, we show that for $0\le k\le L-1$, $|\beta|\le L$ and $x\in\Gamma_{\delta}$,
\be\label{eq4_W}
\begin{split}
&  |\partial_{\rho}^k\partial_{\lambda}^{\beta}W_{n}^j|  \le \sum_{m=0}^{n} |\partial_{\rho}^k\partial_{\lambda}^{\beta}W_{nm}^j|
      \le Cn^{k+2}M_0^{n}\rho^{(2n+2)(1-\epsilon)-1-k}, \quad j=\rho, \phi,\\
    & |\partial_{\rho}^k\partial_{\lambda}^{\beta}W_{n}^3|\le \sum_{m=0}^n|\partial_{\rho}^k\partial_{\lambda}^{\beta}W_{nm}^3|\le Cn^{k+2}M_0^{n}\rho^{(2n+1)(1-\epsilon)-1-k}.
    \end{split}
\ee
We only prove the estimates for $W_n^{\rho}$. The proofs for $W_n^{\phi}$ and $W_n^3$ are similar. By (\ref{eqE_6}), we have
  \[
    W_n^{\rho}=\sum_{m=0}^nW_{nm}^{\rho}, \quad \textrm{ where } W_{nm}^{\rho}=u_m^{\rho}\partial_{\rho}u_{n-m}^{\rho}+u_m^3\partial_3u_{n-m}^{\rho}-\rho^{-1}u_m^{\phi}u_{n-m}^{\phi}.
   \]
    Leibniz's rule gives
    \[
    \begin{split}
         |\partial_{\rho}^k\partial_{\lambda}^{\beta}W_{nm}^{\rho}|  & \le C\sum_{\substack{k_1+k_2=k\\ \beta_1+\beta_2=\beta}}(|\partial_{\rho}^{k_1} \partial_{\lambda}^{\beta_1}u_{m}^{\rho}||\partial_{\rho}^{k_2+1} \partial_{\lambda}^{\beta_2}u_{n-m}^{\rho}|+|\partial_{\rho}^{k_1} \partial_{\lambda}^{\beta_1}u_{m}^3||\partial_{\rho}^{k_2} \partial_3\partial_{\lambda}^{\beta_2}u_{n-m}^{\rho}|)\\
         & +C\sum_{\substack{k_1+k_2+k_3=k\\ \beta_1+\beta_2=\beta}}
         \rho^{-k_3-1}|\partial_{\rho}^{k_1}\partial_{\lambda}^{\beta_1}u_m^{\phi}||\partial_{\rho}^{k_2}\partial_{\lambda}^{\beta_2}u_{n-m}^{\phi}|
         \end{split}
    \]
 Since we take $k\le L-1$, the order of each $(\rho, x_3)$-derivative above is at most $L$, and $|\beta_j|\le |\beta|\le L$, $j=1, 2$. Note $|\ln\rho|<\rho^{-\epsilon}$ for $0<\rho<\delta$.
   Using (\ref{eq4_M_1}), and (\ref{eq4_1_0}) when $m=0$ or $n$, we have, for $x\in\Gamma_{\delta}$ and $0\le m\le n$,
   \[
   \begin{split}
      |\partial_{\rho}^k\partial_{\lambda}^{\beta}W_{nm}^{\rho}| & \le CM_0^n\rho^{(2n+2)(1-\epsilon)-1-k}\sum_{k_1=0}^k (m+1)^{k_1}(n-m+1)^{k-k_1+1}\\
      & \le Cn^{k+1}M_0^n\rho^{(2n+2)(1-\epsilon)-1-k}.
      \end{split}
         \]
   Then
   \[
   \begin{split}
      |\partial_{\rho}^k\partial_{\lambda}^{\beta}W_{n}^{\rho}| & \le \sum_{m=0}^n|\partial_{\rho}^k\partial_{\lambda}^{\beta}W_{nm}^{\rho}|
      \le Cn^{k+2}M_0^{n}\rho^{(2n+2)(1-\epsilon)-1-k}.
     \end{split}
   \]

   By similar computations, using (\ref{eqE_6}), (\ref{eq4_1_0}) and (\ref{eq4_M_1}), we also have the estimates of $\partial_{\rho}^k\partial_{\lambda}^{\beta}W_{n}^{j}$ for $j=\phi, 3$ in (\ref{eq4_W}).

 \medskip

   \noindent Step 2-2. We now estimate $\partial_{\rho}^k\partial_{\lambda}^{\beta}f_{n}^{p}$ and
   $\partial_{\rho}^k\partial_{\lambda}^{\beta}p_{n+1}$.

   Note $W_n$ is $(-3)$-homogeneous. By (\ref{eq4_W}) and (\ref{eqE_0_1}), we have, for $0\le k\le L-2$ and $x\in\Gamma_{\delta}$, that
   \[
   \begin{split}
     |\partial_{\rho}^k\partial_{\lambda}^{\beta}\dive W_n| & \le
     |\partial_{\rho}^k\partial_{\lambda}^{\beta}(\rho^{-1}\partial_{\rho}(\rho W_n^{\rho})-\rho^{-2}\partial_{\rho}(\rho^3W_n^3))|\\
       & \le \sum_{k_1=0}^{k+1}(\rho^{-k-1+k_1}|\partial_{\rho}^{k_1}\partial_{\lambda}^{\beta}W_n^{\rho}|+\rho^{-k+k_1}|\partial_{\rho}^{k_1}\partial_{\lambda}^{\beta}W_n^{3}|)\\
     & \le Cn^{k+3}M_0^{n}\rho^{2(n+1)(1-\epsilon)-2-k}.
     \end{split}
   \]
For $0\le k\le L-2$, the order of $\partial_{\rho}^k\partial_3^2\partial_{\lambda}^{\beta}p_n$
 satisfies $k+2\le L$ and $|\beta|\le L$. Combining the above with (\ref{eq4_M_1}),  we have
   \[
    |\partial_{\rho}^k\partial_{\lambda}^{\beta}f_{n}^p|\le |\partial_{\rho}^k\partial_3^2\partial_{\lambda}^{\beta}p_n|+|\partial_{\rho}^k\partial_{\lambda}^{\beta}\dive W_n|\le Cn^{k+3}M_0^{n}\rho^{2(n+1)(1-\epsilon)-2-k}, \ x\in\Gamma_{\delta}.
  \]
  By the construction in  Lemma \ref{lem3_2},
  $p_{n+1}=\rho^{2n+2}c_{n+1}(t)$ on $\Gamma_{\delta}$, where $c_{n+1}(t)$ is a polynomial. For $n\ge 0$, we have $\lim_{\rho\to 0^+}\partial_{\lambda}^{\beta}p_{n+1}=0$.
        Apply Lemma \ref{lem2_1} with $\alpha=2(n+1)(1-\epsilon)-2$ and $0\le k\le L-2$ to the first equation in (\ref{eq4_M_E}).
        Since $0<\epsilon<1/2$ and we chose $N$ such that $2n(1-\epsilon)-1>n$ for $n\ge N$, we have
   \[
     n<|\alpha|+1=2n(1-\epsilon)+1-2\epsilon\le 4n, \quad n\ge N.
   \]
        The quantitative estimate (\ref{eq2_1_2}) gives
    \begin{equation}\label{eq4_1_11}
   \begin{split}
    & |\partial_{\rho}^{k}\partial_{\lambda}^{\beta}p_{n+1}| \le    C_pn^{k+1}M_0^{n}\rho^{2(n+1)(1-\epsilon)-k},  \quad  0\le k\le L, x\in \Gamma_{\delta},
           \end{split}
   \end{equation}
   for some constant $C_p>0$ depending only on $K$ and $L$.

   \medskip

   \noindent Step 2-3.  Next, we estimate $\partial_{\rho}^k\partial_{\lambda}^{\beta}f_{n}'$ and
   $\partial_{\rho}^k\partial_{\lambda}^{\beta}u'_{n+1}$.

   By  (\ref{eq4_M_1}), (\ref{eq4_W}), and (\ref{eq4_1_11}),    we have,  for $j=\rho, \phi$, $0\le k\le L-2$, and $x\in\Gamma_{\delta}$, that
\[
      |\partial_{\rho}^{k}\partial_{\lambda}^{\beta}f_{n}^{j}|\le |\partial_{\rho}^{k}\partial_3^2\partial_{\lambda}^{\beta}u_{n}^j|+|\partial_{\rho}^{k}\partial_{\lambda}^{\beta}W_{n}^j|+|\partial_{\rho}^{k+1}\partial_{\lambda}^{\beta}p_{n+1}|\le Cn^{k+2}M_0^{n}\rho^{(2n+2)(1-\epsilon)-1-k}.
   \]
   Indeed, the pressure term above is present only when $j=\rho$.
   By the construction in  Lemma \ref{lem3_2}, $u_{n+1}^j=\rho^{2n+3}a_{n+1}^j(t)$, where $a_{n+1}^j(t)$ are polynomials, $j=\rho, \phi$. For $n\ge 0$, we have $\lim_{\rho\to 0^+}\partial_{\lambda}^{\beta}u_{n+1}^j=0$ and $\lim_{\rho\to 0^+}\partial_{\rho}\partial_{\lambda}^{\beta}u_{n+1}^j=0$, $j=\rho, \phi$.        Apply Lemma \ref{lem2_2} with $\alpha=(2n+2)(1-\epsilon)-1$ and $0\le k\le L-2$  to the second equation in (\ref{eq4_M_E}). Arguing as in step 2-2, we have $n<|\alpha|+1\le 4n$ for $n\ge N$.
     Then the quantitative estimate (\ref{eq2_2_4}) gives, for $0\le k\le L$, that
   \begin{equation}\label{eq4_1_14}
   |\partial_{\rho}^{k}\partial_{\lambda}^{\beta}u'_{n+1}|\le C'n^{k}M_0^{n}\rho^{(2n+2)(1-\epsilon)+1-k}\le C'n^{k}M_0^{n}\rho^{(2n+3)(1-\epsilon)-k},\  x\in\Gamma_{\delta},
   \end{equation}
    for some $C'>0$ depending only on $K$ and $L$.

    \medskip

\noindent Step 2-4. Now we estimate $\partial_{\rho}^k\partial_{\lambda}^{\beta}f_{n}^3$ and
  $\partial_{\rho}^k\partial_{\lambda}^{\beta}u_{n+1}^3$.
   By  (\ref{eq4_M_1}) and the second line in (\ref{eq4_W}), we have, for $0\le k\le L-2$ and $x\in\Gamma_{\delta}$, that
      \[
     |\partial_{\rho}^k\partial_{\lambda}^{\beta}f_{n}^3|\le |\partial_{\rho}^k\partial_3^2\partial_{\lambda}^{\beta}u_{n}^3|+|\partial_{\rho}^k\partial_{\lambda}^{\beta}W_{n}^3|+|\partial_{\rho}^k\partial_3\partial_{\lambda}^{\beta}p_{n}|\le  Cn^{k+2}M_0^{n}\rho^{(2n+1)(1-\epsilon)-1-k}.
        \]
   By the construction in  Lemma \ref{lem3_2}, $u^3_{n+1}$ has the form $u_{n+1}^3=\rho^{2n+2}a_{n+1}^3(t)$ on $\Gamma_{\delta}$, where $a_{n+1}^3(t)$ is a polynomial.
   Hence $\lim_{\rho\to 0^+}\partial_{\lambda}^{\beta}u_{n+1}^3=0$.
   Apply Lemma \ref{lem2_1} with $\alpha=(2n+1)(1-\epsilon)-1$ and $0\le k\le L-2$ to the third equation in (\ref{eq4_M_E}). Arguing as in step 2-2, we have $n<|\alpha|+1\le 4n$ for $n\ge N$.
   Then (\ref{eq2_1_2}) gives, for $0\le k\le L$, that
       \begin{equation}\label{eq4_1_9}
    |\partial_{\rho}^{k}\partial_{\lambda}^{\beta}u_{n+1}^3|
   \le C_3n^{k}M_0^{n}\rho^{(2n+1)(1-\epsilon)+1-k}
   \le C_3n^{k}M_0^{n}\rho^{2(n+1)(1-\epsilon)-k}, \
    x\in\Gamma_{\delta},
   \end{equation}
   for some constant $C_3>0$ depending only on  $K$ and $L$.

 Choose $M_0>\max\{\bar{M}_0, C_3, C_p, C'\}$. Combining (\ref{eq4_1_11}), (\ref{eq4_1_14}) and (\ref{eq4_1_9}), we obtain (\ref{eq4_M}) for $(u_{n+1}, p_{n+1})$.
   The Claim is proved.

\medskip

\noindent 3. Let $L\ge 2$ be fixed.
We show that there exists some $\delta_L>0$, depending only on $L$ and $K$, such that for any compact set $V\subset\subset \Omega_{\delta_L}\setminus\{x'=0\}$, the series $\sum_{n=0}^{\infty}u_n$ and $\sum_{n=0}^{\infty}p_n$ converge in $C^L(K\times V)$.
Since $u_n$ is $(-1)$-homogeneous and $p_n$ is $(-2)$-homogeneous,  it follows from  (\ref{eqE_0_4}) and (\ref{eq4_M}) that, for $n\ge 1$, every multi-index $\beta\in\mathbb{N}^4$  and $k, l\in\mathbb{N}$ satisfying %$|\beta|\le L$ and $k+l\le L$, %
$|\beta|+k+l\le L$, 
we have
\be\label{eq4_1_17}
   \begin{split}
& |\nabla_{x'}^k\partial_3^l\partial_{\lambda}^{\beta}u_n'|\le Cn^{k+l}M_0^{n}\rho^{(2n+1)(1-\epsilon)-k},\\
&   |\nabla_{x'}^k\partial_3^l\partial_{\lambda}^{\beta}u_n^3|\le Cn^{k+l}M_0^{n}\rho^{2n(1-\epsilon)-k},\\
&  |\nabla_{x'}^k\partial_3^l\partial_{\lambda}^{\beta}p_{n}|\le Cn^{k+l+1}M_0^{n}\rho^{2n(1-\epsilon)-k},
\end{split}
\ee
where $C>0$ depends only on  $K$ and $L$.
 Choose $0<\delta_L<\delta$ sufficiently small such that
 \[
    M_0\delta_L^{2(1-\epsilon)}<1.
 \]
  For each $0\le k+l\le L$, the powers of $\rho$ in (\ref{eq4_1_17}) are positive for all sufficiently large $n$. Thus the series $\sum_{n=0}^{\infty}\partial_{\lambda}^{\beta}\nabla_{x'}^k\partial_3^lu_{n}$ and $\sum_{n=0}^{\infty}\partial_{\lambda}^{\beta}\nabla_{x'}^k\partial_3^lp_{n}$ are absolutely and uniformly convergent on $K\times \Gamma_{\delta_L}$, for all $\beta\in\mathbb{N}^4$  and $k, l\in\mathbb{N}$ satisfying %$|\beta|\le L$ and $k+l\le L$. 
  $|\beta|+k+l\le L$.
By homogeneity of $(u_n, p_n)$, the convergence is locally uniform on $K\times(\Omega_{\delta_L}\setminus\{x'=0\})$.
Since $\epsilon$ has been fixed throughout the proof, the constants $M_0$ and  $\delta_L$ depend only on $K$ and $L$.

\medskip

\noindent 4. Now we prove part (i) of the lemma. Take $L=2$ and set $\delta=\delta_2$. By the argument in Step 3 with $\beta=0$, the series $u_{\lambda}:=\sum_{n=0}^{\infty}u_{n}$ and $p_{\lambda}:=\sum_{n=0}^{\infty}p_{n}$
   are well-defined and  converge in $C^2(V)$ for any compact subset $V\subset \Omega_{\delta}\setminus\{x'=0\}$, uniformly for $\lambda\in K$.
   By the axisymmetry and homogeneity of $u_n, p_n$, we have that $u_{\lambda}, p_{\lambda}$ are axisymmetric and homogeneous of degrees $-1$ and $-2$ respectively. By the Claim and  the explicit form of the leading term $(u_0, p_0)$, we have that $(u_{\lambda}, p_{\lambda})$ satisfies (\ref{eq1_a_3}) and $|u_{\lambda}|=o(|x'|^{-1})$ as $|x'|\to 0$ for each fixed $x_3>0$.

Next, we show $(u_{\lambda}, p_{\lambda})$ is a solution of (\ref{NS}) in $\Omega_{\delta}\setminus\{x'=0\}$. By the definition of $(u_0, p_0)$,  we have
\be\label{eq4_1_15}
-\Delta' p_0=0,\quad
     \Delta'u_0'=\nabla_{x'}p_0,  \quad \Delta'u_0^3=0.
\ee
By the construction in Lemma \ref{lem3_2}, after homogeneous extension,
the recursive system (\ref{eq1_3}) holds in $\Omega_{\delta}\setminus\{x'=0\}$ for all $n\ge 0$.
By (\ref{eq4_1_0}) and (\ref{eq4_M}), we see that  (\ref{eq4_W}) holds for every $n\ge 1$, while the corresponding estimates  for $n=0$ hold with $n$ replaced by $1$ on the right-hand sides of the estimates in (\ref{eq4_W}).
Taking $k=0$ and $\beta=0$, we obtain
\[
\sum_{n=0}^{\infty}\sum_{m=0}^{n}|u_m\cdot\nabla u_{n-m}|=\sum_{n=0}^{\infty}\sum_{m=0}^{n}|W_{nm}|\le C\sum_{n=0}^{\infty}(n^2+1)M_0^n\rho^{(2n+1)(1-\epsilon)-1}<\infty, \quad \textrm{ on }\Gamma_{\delta},
\]
and the series is absolutely and locally uniformly convergent in $K\times (\Omega_{\delta}\setminus\{x'=0\})$. So
\[
  \sum_{n=0}^{\infty}\sum_{m=0}^{n}u_m\cdot\nabla u_{n-m}=\sum_{m=0}^{\infty}u_m\cdot \sum_{i=0}^{\infty}\nabla u_{i}=u_{\lambda}\cdot \nabla u_{\lambda}.
\]
Summing the last two equations in (\ref{eq4_1_15}) together with the last two equations in (\ref{eq1_3}) over all $n\ge 0$,   we obtain
\[
  \Delta' u_{\lambda}=-\partial_3^2u_{\lambda}+u_{\lambda}\cdot\nabla u_{\lambda}+\nabla p_{\lambda}, \quad \textrm{ in }\Omega_{\delta}\setminus\{x'=0\}.
\]
By Lemma \ref{lem3_2}, we have $\dive u_n=0$ for all $n\ge 0$.
Passing to the limit in the locally uniformly convergent series $\sum_{n=0}^{\infty}\dive u_n$ gives $\dive u_{\lambda}=0$.
Therefore,
$(u_{\lambda}, p_{\lambda})\in C^2(\Omega_{\delta}\setminus\{x'=0\})$ is a solution of (\ref{NS}) in $\Omega_{\delta}\setminus\{x'=0\}$. Write the above equation as
\[
  -\Delta u_{\lambda}+\nabla p_{\lambda}=-u_{\lambda}\cdot \nabla u_{\lambda}.
\]
By a bootstrap argument using standard interior estimates for Stokes equations away from $\{x'=0\}$,
  we obtain $(u_{\lambda}, p_{\lambda})\in C^{\infty}(\Omega_{\delta}\setminus\{x'=0\})$. So (i) is proved.

\medskip

 \noindent 5. Now we prove (ii). Fix $L\ge 2$ and $K$, let $(u_{\lambda}, p_{\lambda})$ be the solution of (\ref{NS}) in $\Omega_{\delta}\setminus\{x'=0\}$ constructed above. By the Claim and the argument in Step 3,  there exists some $0<\delta'\le \delta$, depending only on $L$ and $K$,  such that the series $\sum_{n=0}^{\infty}\partial_{\rho}^k\partial_{\lambda}^{\beta}u_{n}$ and $\sum_{n=0}^{\infty}\partial_{\rho}^k\partial_{\lambda}^{\beta}p_{n}$
 are absolutely and locally uniformly convergent on $K\times \Gamma_{\delta'}$,
 for all $|\beta|\le L$ and $0\le k\le L$. 
 Consequently, (\ref{eqprop4_3}) and (\ref{eqprop4_2}) hold for $0<\rho<\delta'$, $0\le k\le L$ and $|\beta|\le L$. Moreover, $(u_{\lambda}, p_{\lambda})$ depends smoothly on $\lambda$ on $\Gamma_{\delta'}$.

 It remains to show that the same estimates hold on $\Gamma_{\delta}\cap \{\delta'\le \rho\le\delta\}$. Note the reduced system (\ref{eqNC_C}) is an ODE system in $\rho$ with smooth coefficients and every $\rho>0$ is an ordinary point. By the above arguments,  for any $|\beta|\le L$ and $0\le k\le L$,
 $\partial_{\rho}^k\partial_{\lambda}^{\beta}u_{\lambda}|_{\rho=\delta'/2}$ and $\partial_{\rho}^k\partial_{\lambda}^{\beta}p_{\lambda}|_{\rho=\delta'/2}$ are uniformly bounded for $\lambda\in K$.
  Standard ODE theory
  implies that $(u_{\lambda}, p_{\lambda})$ is smooth in
   $(\lambda, x)\in K\times \{\delta'/2\le \rho\le\delta, x_3=1\}$, and
  \[
    |\partial_{\rho}^k\partial_{\lambda}^{\beta}u_{\lambda}|+ |\partial_{\rho}^k\partial_{\lambda}^{\beta}p_{\lambda}|\le C, \quad \delta'/2\le \rho< \delta, \ x_3=1, \ 0\le k\le L,\  |\beta|\le L,
  \]
where $C>0$ depends only on $L, \delta, \delta'$ and $K$.
 Since $\delta$ depends only on $K$ and $\delta'$ depends only on $L$ and $K$, we have that $C$ depends only on $L$ and $K$. Since the weights in  (\ref{eqprop4_3}) and (\ref{eqprop4_2}) are bounded below by positive constants on $\delta'\le \rho\le \delta$, the estimates also hold there after increasing $C$.
 Thus  (\ref{eqprop4_3}), (\ref{eqprop4_2}) hold for all $0<\rho<\delta$.  (ii) is proved.
\end{proof}

Next, we show that every local $(-1)$-homogeneous axisymmetric solution $(u, p)$ of (\ref{NS}) near the positive $x_3$-axis satisfying $|u|=o(|x'|^{-1})$ as $x'\to 0$ for each fixed $x_3>0$ is  given by the series solutions we constructed above.
%
%Next, we show that all local Type II $(-1)$-homogeneous axisymmetric solutions of (\ref{NS}) near the positive $x_3$-axis are given by the series solutions we constructed above.
%
% Consider such a  solutions $(u, p)$ of (\ref{NS}) in $\Omega_{\delta}\setminus\{x'=0\}$ for some $\delta>0$. %Consider a local Type II $(-1)$-homogeneous axisymmetric solution $(u, p)$ of (\ref{NS}) in $\Omega_{\delta}\setminus\{x'=0\}$ for some $\delta>0$.
 By Lemma \ref{lem3_1}, there exists a unique $\lambda=(c_0, d_0, b_{0}^{\rho}, b_{0}^{\phi})$ associated with $(u, p)$ through  (\ref{eq1_a_3}). Let $\{(u_n, p_n)\}_{n=0}^{\infty}$ be the sequence constructed in Lemma \ref{lem3_2} with this $\lambda$.
For each $N\ge 0$, set
\be\label{eq4_M_2_N}
    U_{N}=\sum_{n=0}^{N}u_{n}, \quad h_{N}=u-U_{N}, \quad P_{N}=\sum_{n=0}^{N}p_{n}, \quad r_{N}=p-P_{N}.
\ee
 \begin{lem}\label{lem4_M_2}
  Let $\delta>0$ and $(u, p)\in C^2(\Omega_{\delta}\setminus\{x'=0\})$ be a $(-1)$-homogeneous axisymmetric solution of (\ref{NS}) in $\Omega_{\delta}\setminus\{x'=0\}$
  satisfying
  \[
  |u|=o(|x'|^{-1}), \textrm{ as }x'\to 0
  \]
   for each fixed $x_3>0$. Let $\lambda=(c_0, d_0, b_{0}^{\rho}, b_{0}^{\phi})$ be associated with $(u, p)$ through (\ref{eq1_a_3}), let $\{(u_n, p_n)\}_{n=0}^{\infty}$ be the sequence constructed in Lemma \ref{lem3_2}, and let $h_N, r_N$ be defined by (\ref{eq4_M_2_N}) for $N\ge 0$.
  Then for any $\epsilon>0$, there exist constants $0<\delta'\le \delta$ and $K_0>0$, depending only on $(u, p)$,   $\delta$ and $\epsilon$,
  such that for every $N\ge 0$, $0\le k\le 2$ and $x\in \Gamma_{\delta'}$,
   \begin{equation}\label{eq4_M_h}
   \begin{split}
  &  |\nabla_{x'}^kh_{N}'|\le (N+1)^{k}K_0^{N+1}\rho^{(2N+3)(1-\epsilon)-k},\\
  &  |\nabla_{x'}^kh_{N}^3|\le (N+1)^{k}K_0^{N+1}\rho^{2(N+1)(1-\epsilon)-k}, \\
  &  |\nabla_{x'}^kr_{N}|\le (N+1)^{k+1}K_0^{N+1}\rho^{2(N+1)(1-\epsilon)-k}.
   \end{split}
  \end{equation}
  \end{lem}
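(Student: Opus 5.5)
Proof by induction on $N$, reproducing on the remainders $(h_N,r_N)$ the majorant argument of Lemma \ref{lem4_M_1}, with Lemma \ref{lem3_1} as the base case. Fix $\epsilon$ (without loss of generality $0<\epsilon<1/2$) and $\delta'<\delta$. Lemma \ref{lem3_1} gives $h_0=u-u_0$ and $r_0=p-p_0$ with $|\nabla_{x'}^k\partial_3^l h_0'|\le C\rho^{3-k-\epsilon}$ and $|\nabla^k_{x'}\partial_3^l h_0^3|+|\nabla^k_{x'}\partial_3^l r_0|\le C\rho^{2-k-\epsilon}$ on $\Gamma_{\delta'}$ for all $k,l$. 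Since $\rho^{3-\epsilon}\le\rho^{3(1-\epsilon)}$ for $\rho<1$, this is (\ref{eq4_M_h}) for $N=0$ once $K_0\ge C$.

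For the inductive step, subtract the recursive system (\ref{eq1_3}), summed over $n\le N$, from (\ref{NS}) written in the form (\ref{eq3_1_6}). This gives
\[
-\Delta' r_{N+1}=\partial_3^2 r_N+\dive\Big(u\cdot\nabla u-\sum_{n=0}^{N}W_n\Big),\quad
\Delta' h_{N+1}'=-\partial_3^2h_N'+\Big(u\cdot\nabla u'-\sum_{n=0}^{N}W_n'\Big)+\nabla_{x'}r_{N+1},
\]
and an analogous equation for $h^3_{N+1}$ with $\partial_3 r_N$ on the right. The nonlinear remainder can be written as
\[
u\cdot\nabla u-\sum_{n\le N}W_n=\sum_{m=0}^{N}\big(u_m\cdot\nabla h_{N-m}+h_{N-m}\cdot\nabla u_m\big)-\sum_{m}h_{N-m}\cdot\nabla h_{m}\text{-type corrections},
\]
or, more cleanly, as $\sum_{m+n\ge N+1,\ m,n\le N}u_m\cdot\nabla u_n+h_N\cdot\nabla U_N+U_N\cdot\nabla h_N+h_N\cdot\nabla h_N$. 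To bound it I will use the inductive bounds on $h_m$, $m\le N$, together with the bounds (\ref{eq4_M_1}) on $u_m,p_m$ from the Claim in Lemma \ref{lem4_M_1} (valid on some $\Gamma_\delta$ with $\lambda$ fixed). Every term then carries the power $\rho^{(2N+4)(1-\epsilon)-1-k}$ in the $e_\rho,e_\phi$ components and $\rho^{(2N+3)(1-\epsilon)-1-k}$ in the $e_3$ component, with coefficient $\lesssim (N+1)^{k+2}K_0^{N+1}$; this requires $M_0\le K_0$ and that the products $K_0^{a}K_0^{b}$ with $a+b\le N+1$ stay below $K_0^{N+1}$.

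Since $h_{N+1}=u-U_{N+1}$, I do not need to construct $h_{N+1}$. It already solves the equation, and it differs from the particular solution of Lemmas \ref{lem2_1} and \ref{lem2_2} by a radial harmonic piece: $C_1t+C_2$ for the scalars, $C_1\rho+C_2\rho^{-1}$ for $h'$. I will kill these constants using smallness as $\rho\to0$. Lemma \ref{lem3_1} gives $h_N=o(\rho)$ and $\partial_\rho h_N'\to0$, and $r_N,h_N^3\to0$, by (A2) and the uniqueness of $\lambda$. Hence the quantitative bounds (\ref{eq2_1_2}) and (\ref{eq2_2_4}) apply with constants independent of $N$. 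I will do $r_{N+1}$ first, then $h_{N+1}'$ (which uses $\nabla r_{N+1}$), then $h^3_{N+1}$, exactly as in Steps 2-2 to 2-4.

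The main obstacle is derivative loss. The right-hand sides involve $\partial_3^2 h_N$ and first derivatives of $h_N$, whose $\partial_3$ derivatives convert via (\ref{eqE_0_4}) into up to second $\rho$-derivatives. So the induction cannot close with only $k\le2$. I would strengthen the inductive hypothesis to all $k+l\le L$ for a fixed $L$ (say $L=4$). The elliptic gain of two derivatives in Lemmas \ref{lem2_1} and \ref{lem2_2} recovers order $L$ for the $\rho$-derivatives, and the $\partial_3$-derivatives then come from homogeneity. The $N^k$ factors absorb the resulting $(|\alpha|+1)^k$ growth, as in the Claim. A second delicate point is that the base case has constants that are not uniform in $k$; but only finitely many derivatives are needed, so $K_0$ can be taken large, depending on $L$, $(u,p)$ and $\epsilon$.
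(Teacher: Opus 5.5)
Your plan is essentially the paper's proof. It uses induction on $N$ starting from Lemma~\ref{lem3_1}, the remainder system (\ref{eq4_2_1}) with nonlinear remainder $I_N=I_{N1}+I_{N2}+I_{N3}$, and the Claim's bounds (\ref{eq4_M_1}) on $u_m$. It then applies the quantitative parts of Lemmas~\ref{lem2_1} and \ref{lem2_2} to $r_{N+1}$, $h_{N+1}'$, $h_{N+1}^3$ in that order, with the radial harmonic parts excluded by the vanishing limits at $\rho=0$.

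Two corrections are needed, neither fatal.

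\emph{There is no derivative loss.} Lemmas~\ref{lem2_1}--\ref{lem2_2} with $L=0$ turn a pointwise bound on $g_N$ into bounds on $\partial_\rho^k$ of the solution for $k\le 2$. The right-hand side $g_N$ involves at most second derivatives of $h_N, r_N$. So the induction closes with $k\le 2$, as the paper does, and your strengthening to $L=4$ is unnecessary, though harmless.

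\emph{The $K_0$ bookkeeping fails in the decomposition you settle on.} There, $h_N\cdot\nabla h_N$ carries $K_0^{2N+2}$ and $I_{N1}$ carries $M_0^{n}$ with $N+1\le n\le 2N$. As in (\ref{eq4_2_4}), $\delta'$ must therefore be chosen after $K_0$, with $K_0\rho^{2(1-\epsilon)}<1$ and $M_0\rho^{2(1-\epsilon)}<\frac12$, to absorb the excess powers. Your ``$a+b\le N+1$'' accounting is literally correct only for the exact identity
\[
u\cdot\nabla u-\sum_{n\le N}W_n=\sum_{m=0}^{N}u_m\cdot\nabla h_{N-m}+h_N\cdot\nabla u .
\]
Using that identity would also work.
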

  \begin{proof}
1. It suffices to prove the result for $0<\epsilon<1/2$, since the estimate for a larger $\epsilon$ follows from the estimate for a smaller $\epsilon$ after decreasing $\delta'<1$ if necessary.
By the Claim in the proof of Lemma \ref{lem4_M_1} with $L=2$, $K=\{\lambda\}$ and the present $\epsilon$, together with (\ref{eqE_0_4}), there exist constants $M_0$ and $0<\bar{\delta}<\min\{\delta, 1\}$, depending only on $(u, p)$, $\delta$,
and $\epsilon$, such that
(\ref{eq4_M_1}) holds for all $k, l\in\mathbb{N}$ satisfying $k+l\le 2$, $m\ge 1$, $|\beta|=0$ and $x\in\Gamma_{\bar{\delta}}$. %$0<\rho<\bar{\delta}$.

By Lemma \ref{lem3_1}, there exists some $\bar{K}_0>0$,  depending only on $(u, p)$, $\delta$ and $\epsilon$,  such that (\ref{eq4_M_h}) holds for $N=0$ and  $0<\rho<\bar{\delta}$, with $K_0$ replaced by $\bar{K}_0$.
  Throughout the proof, $C>0$ denotes a constant depending only on $(u, p)$, $\delta$ and $\epsilon$,
  which may vary from line to line.

  Assume that (\ref{eq4_M_h})  holds for $(h_n, r_n)$ for all $0\le n\le N$.
  We show it also holds for $N+1$.
  Write $u=U_{N}+h_{N}$, $p=P_{N}+r_{N}$, and
  \[
  \begin{split}
    W & =u\cdot \nabla u=(U_{N}+h_{N})\cdot \nabla (U_{N}+h_{N})=\sum_{n=0}^NW_n+I_N,
\end{split}
  \]
 where $I_N=I_{N1}+I_{N2}+I_{N3}$, with
\[
  I_{N1}=\sum_{\substack{i+m\ge  N+1\\ 0\le i, m\le N}}u_{m}\cdot \nabla u_{i},
    \quad I_{N2}=U_{N}\cdot \nabla h_{N}+h_{N}\cdot \nabla U_{N}, \quad I_{N3}=h_{N}\cdot \nabla h_{N}.
\]
  By (\ref{NS}), we have
  \[
  \begin{split}
  & -\Delta'p=\partial_3^2p+\dive (u\cdot \nabla u)\\
   &  \Delta' u=-\partial_3^2u+u\cdot \nabla u+\nabla p.
    \end{split}
  \]
  Adding (\ref{eq4_1_15}) to the sum of (\ref{eq1_3}) over $0\le n\le N$ and subtracting the resulting equations from the equations above,
  we obtain
  \begin{equation}\label{eq4_2_1}
     \left\{
     \begin{split}
     & -\Delta' r_{N+1}=\partial_3^2r_{N}+\dive I_N=:g_{N}^{p}, \\
     &  \Delta' h_{N+1}'=-\partial_3^2h_{N}'+I'_N+\nabla' r_{N+1}=:g_{N}', \\
     & \Delta' h_{N+1}^3=-\partial_3^2h_{N}^3+I^3_N+\partial_{3}r_{N}=:g_{N}^3.
     \end{split}
     \right.
  \end{equation}
  We will estimate $g_{N}^p,  g_{N}', g_{N}^3$
  and then apply
  Lemma \ref{lem2_1} and Lemma \ref{lem2_2} to derive estimates of $h_{N+1}$ and $r_{N+1}$.
   Let $K_0>\max\{M_0, \bar{K}_0, 1\}$ be chosen later,     and then choose $0<\delta'<\min\{\bar{\delta}, 1\}$ sufficiently small such that
   \be\label{eq4_2_4}
     |\ln\rho|<\rho^{-\epsilon}, \quad M_0\rho^{2(1-\epsilon)}<\frac{1}{2}, \quad K_0\rho^{2(1-\epsilon)}<1, \quad \forall 0<\rho<\delta'.
     \ee
 By summing the estimates in (\ref{eq4_1_0}) and (\ref{eq4_M_1}) for $0\le n\le N$, in view of (\ref{eqE_0_4}), we have,  for $x\in\Gamma_{\delta'}$, that
\begin{equation}\label{eq4_2_U}
  \begin{split}
     & |\partial_3^lU_N'|\le C\rho(|\ln\rho|+1), \quad |\partial_3^lU_N^3|\le C(|\ln\rho|+1), \textrm{ for }0\le l\le 2, \\
     &  |\partial_{\rho}\partial_3^lU_N'|\le C(|\ln\rho|+1), \quad |\partial_{\rho}\partial_3^lU_N^3|\le C\rho^{-1}, \textrm{ for }l=0, 1,  \\
     & |\partial_{\rho}^2U_N'|\le C\rho^{-1},  \quad |\partial_{\rho}^2U_N^3|\le C\rho^{-2}.
     \end{split}
\end{equation}
Since $h_{N}$ is $(-1)$-homogeneous and $r_{N}$ is $(-2)$-homogeneous, it follows from (\ref{eqE_0_4}) and the induction hypothesis (\ref{eq4_M_h}) that,  for $0\le k+l\le 2$ and $N\ge 0$, we have
\begin{equation}\label{eq4_2_3}
\begin{split}
& |\partial_{\rho}^k\partial_3^lh_{N}'|\le C(N+1)^{k+l}K_0^{N+1}\rho^{(2N+3)(1-\epsilon)-k}, \\
&  |\partial_{\rho}^k\partial_3^lh_{N}^3|\le C(N+1)^{k+l}K_0^{N+1}\rho^{2(N+1)(1-\epsilon)-k},\\
&  |\partial_{\rho}^k\partial_3^lr_{N}|\le C(N+1)^{k+l+1}K_0^{N+1}\rho^{2(N+1)(1-\epsilon)-k}.
\end{split}
\end{equation}

  \noindent 2. We first estimate $I_N$ and show that for $x\in \Gamma_{\delta'}$ and $0\le k\le 1$,
  \be\label{eq4_I_N}
  \begin{split}
  & |\partial_{\rho}^kI^j_N|\le C(N+1)^{k+2}K_0^{N+1}\rho^{(2N+3)(1-\epsilon)-\epsilon-k},\ j=\rho, \phi, \\
    &   |\partial_{\rho}^kI_N^3|\le C(N+1)^{k+2}K_0^{N+1}\rho^{2(N+1)(1-\epsilon)-\epsilon-k}.
      \end{split}
  \ee
  We only prove it for $I_N^{\rho}$. The proofs for $I_N^{\phi}$ and $I_N^3$ are similar.
  Let $k_1, k_2, k_3\in\mathbb{N}$ denote index numbers below. By (\ref{eqE_6}) and  (\ref{eq4_M_1}) with $|\beta|=0$, using that $M_0\rho^{2(1-\epsilon)}<1/2$ and $K_0\ge M_0$,
  we have, for $x\in\Gamma_{\delta'}$ and $k=0, 1$, that
    \begin{equation*}
   \begin{split}
      |\partial_{\rho}^kI_{N1}^{\rho}| &
      \le \sum_{n=N+1}^{2N}\sum_{\substack{m+i=n, \\ 1\le m, i\le N}}(\sum_{k_1+k_2=k}(|\partial_{\rho}^{k_1}u_{m}^{\rho}\partial_{\rho}^{k_2+1}u_{i}^{\rho}|+|\partial_{\rho}^{k_1}u_{m}^3\partial_{\rho}^{k_2}\partial_3u_{i}^{\rho}|)\\
      & + \sum_{k_1+k_2+k_3=k}\rho^{-k_3-1}|\partial_{\rho}^{k_1}u_m^{\phi}\partial_{\rho}^{k_2}u_i^{\phi}|)\\
      &  \le C\sum_{n=N+1}^{2N}(\sum_{m=n-N}^{N}\sum_{k_1=0}^{k}m^{k_1}(n-m)^{k-k_1+1})M_0^{n}\rho^{(2n+1)(1-\epsilon)-\epsilon-k}\\
   & \le C(N+1)^{k+2}K_0^{N+1}\rho^{(2N+3)(1-\epsilon)-\epsilon-k}.
   \end{split}
   \end{equation*}
   By the induction hypothesis (\ref{eq4_M_h}), together with (\ref{eq4_2_U}), (\ref{eq4_2_3}) and (\ref{eq4_2_4}), we have, for $x\in\Gamma_{\delta'}$ and $k=0, 1$, that
    \[
   \begin{split}
      |\partial_{\rho}^{k}I_{N2}^{\rho}|  & \le \sum_{k_1+k_2=k}(|\partial_{\rho}^{k_1}U_{N}^{\rho}\partial_{\rho}^{k_2+1}h_{N}^{\rho}|+|\partial_{\rho}^{k_1}U_{N}^3\partial_{\rho}^{k_2}\partial_3h_{N}^{\rho}|+ |\partial_{\rho}^{k_1}h_{N}^{\rho}\partial_{\rho}^{k_2+1}U_{N}^{\rho}|+|\partial_{\rho}^{k_1}h_{N}^3\partial_{\rho}^{k_2}\partial_3U_{N}^{\rho}|) \\
           & +\sum_{k_1+k_2+k_3=k}\rho^{-k_3-1}|\partial_{\rho}^{k_1}U_N^{\phi}\partial_{\rho}^{k_2}h_N^{\phi}| \le C(N+1)^{k+1}K_0^{N+1}\rho^{(2N+3)(1-\epsilon)-\epsilon-k},
           \end{split}
           \]
           \[
           \begin{split}
      |\partial_{\rho}^{k}I_{N3}^{\rho}| &  \le \sum_{k_1+k_2=k}(|\partial_{\rho}^{k_1}h_{N}^{\rho}\partial_{\rho}^{k_2+1}h_{N}^{\rho}|+|\partial_{\rho}^{k_1}h_{N}^3\partial_{\rho}^{k_2}\partial_3h_{N}^{\rho}|) +\sum_{k_1+k_2+k_3=k}\rho^{-k_3-1}|\partial_{\rho}^{k_1}h_N^{\phi}\partial_{\rho}^{k_2}h_N^{\phi}|\\
      &  \le C(N+1)^{k+1}K_0^{2N+2}\rho^{(4N+5)(1-\epsilon)-\epsilon-k}
        \le  C(N+1)^{k+1}K_0^{N+1}\rho^{(2N+3)(1-\epsilon)-\epsilon-k}.
      \end{split}
  \]
   Combining  the above estimates, we obtain (\ref{eq4_I_N}) for $I_N^{\rho}$. The proof of (\ref{eq4_I_N}) for $j=\phi, 3$ is similar.

   \medskip

   \noindent 3. Now we estimate $r_{N+1}$.
  Note $I_N$ is $(-3)$-homogeneous. By (\ref{eqE_0_1}) and (\ref{eq4_I_N}), we obtain
  \[
  \begin{split}
    |\dive I_N| & =|\rho^{-1}\partial_{\rho}(\rho I_N^{\rho})-\rho^{-2}\partial_{\rho}(\rho^3I_N^3)|
     \le \rho^{-1}|I_N^{\rho}|+|\partial_{\rho}I_N^{\rho}|+3|I_N^3|+\rho|\partial_{\rho}I_N^3|\\
     & \le C(N+1)^3K_0^{N+1}\rho^{2(N+1)(1-\epsilon)-2\epsilon}.
    \end{split}
  \]
Using this
and (\ref{eq4_2_3}), we have
\[
   |g_{N}^p|\le |\partial_{3}^2r_{N}|+|\dive I_N|\le C(N+1)^3K_0^{N+1}\rho^{2(N+1)(1-\epsilon)-2\epsilon}, \quad \textrm{ in }\Gamma_{\delta'}.
\]
Since $r_{N+1}=r_N-p_{N+1}$ and $\lim_{\rho\to 0^+}r_{N}=\lim_{\rho\to 0^+}p_{N+1}=0$, we have  $\lim_{\rho\to 0^+}r_{N+1}=0$.
Apply Lemma \ref{lem2_1} with $L=0$ and $\alpha=2(N+1)(1-\epsilon)-2\epsilon$ to the first equation of (\ref{eq4_2_1}).
For the fixed $0<\epsilon<1/2$, there exists a constant  $C_{\epsilon}>0$, such that $(N+1)/C_{\epsilon}\le |\alpha|+1\le C_{\epsilon}(N+1)$ for all $N\ge 0$.
The quantitative estimate (\ref{eq2_1_2}),  together with (\ref{eq3_E_1}),  gives, for $0\le k\le 2$ and $x\in\Gamma_{\delta'}$, that
   \begin{equation}\label{eq4_2_16}
    \begin{split}
   |\nabla^k_{x'}r_{N+1}| & \le C_p(N+1)^{k+1}K_0^{N+1}\rho^{2(N+1)(1-\epsilon)-2\epsilon+2-k}
    = C_p(N+1)^{k+1}K_0^{N+1}\rho^{2(N+2)(1-\epsilon)-k},
   \end{split}
   \end{equation}
  where $C_p>0$ depends only on $(u, p), \delta$ and $\epsilon$.

\medskip

   \noindent 4. Next, we estimate $h_{N+1}'$.
   By (\ref{eq4_M_h}), (\ref{eq4_2_3}), (\ref{eq4_I_N}) and (\ref{eq4_2_16}), we have
   \[
     |g_{N}'|\le |\partial_3^2h_{N}'|+|I'_N|+|\nabla_{x'}r_{N+1}|\le C(N+1)^2K_0^{N+1}\rho^{(2N+3)(1-\epsilon)-\epsilon}, \quad \textrm{ in }\Gamma_{\delta'}.
   \]
  Since $h_{N+1}'=h_{N}'-u_{N+1}'$, $\lim_{\rho\to 0^+}h_{N}'=\lim_{\rho\to 0^+}u_{N+1}'=0$ and $\lim_{\rho\to 0^+}\partial_{\rho}h_{N}'=\lim_{\rho\to 0^+}\partial_{\rho}u_{N+1}'=0$, we have $\lim_{\rho\to 0^+}h_{N+1}'=0$ and $\lim_{\rho\to 0^+}\partial_{\rho}h_{N+1}'=0$. Apply Lemma \ref{lem2_2} with $L=0$ and $\alpha=(2N+3)(1-\epsilon)-\epsilon$ to the second equation in  (\ref{eq4_2_1}). As in step 3, we also have $(N+1)/C_{\epsilon}\le |\alpha|+1\le C_{\epsilon}(N+1)$ for all $N\ge 0$, for some $C_{\epsilon}>0$ depending only on $\epsilon$.
  The quantitative estimate (\ref{eq2_2_4}),  together with (\ref{eq3_E_1}),  gives, for $0\le k\le 2$ and $x\in \Gamma_{\delta'}$, that
   \begin{equation}\label{eq4_2_23}
   \begin{split}
    |\nabla^k_{x'}h_{N+1}'| & \le C'(N+1)^kK_0^{N+1}\rho^{(2N+3)(1-\epsilon)-\epsilon+2-k}
     \le C'(N+1)^kK_0^{N+1}\rho^{(2N+5)(1-\epsilon)-k},
    \end{split}
   \end{equation}
   for some constant $C'>0$ depending only on $(u, p), \delta$ and $\epsilon$.

\medskip

      \noindent 5. Now we estimate $h_{N+1}^3$.

 By  (\ref{eq4_M_h}), (\ref{eq4_2_3})  and (\ref{eq4_I_N}), we have
   \[
     |g_{N}^3|\le |\partial_3^2h_{N}^3|+|I^3_N|+|\partial_3r_{N}|\le C(N+1)^2K_0^{N+1}\rho^{2(N+1)(1-\epsilon)-\epsilon}, \quad  \textrm{ in }\Gamma_{\delta'}.
   \]
Since $h_{N+1}^3=h_N^3-u_{N+1}^3$ and $\lim_{\rho\to 0^+}h_N^3=\lim_{\rho\to 0^+}u_{N+1}^3=0$, we have $\lim_{\rho\to 0^+}h_{N+1}^3=0$.
 Apply Lemma \ref{lem2_1} with $L=0$ and $\alpha=2(N+1)(1-\epsilon)-\epsilon$ to the third equation of (\ref{eq4_2_1}).
 As argued in step 3,
 $(N+1)/C_{\epsilon}\le |\alpha|+1\le C_{\epsilon}(N+1)$ for some constant $C_{\epsilon}>0$ and all $N\ge 0$. The quantitative estimate (\ref{eq2_1_2}), together with (\ref{eq3_E_1}), gives,
  for $0\le k\le 2$ and $x\in \Gamma_{\delta'}$, that
 \begin{equation}\label{eq4_2_10}
 \begin{split}
 |\nabla^k_{x'}h_{N+1}^3|
    &   \le  C_3(N+1)^kK_0^{N+1}\rho^{2(N+1)(1-\epsilon)-\epsilon+2-k}
       \le C_3(N+1)^kK_0^{N+1}\rho^{2(N+2)(1-\epsilon)-k},
      \end{split}
   \end{equation}
   where $C_3>0$ depends on $(u, p), \delta$ and $\epsilon$.

   \medskip

 Finally, choose
  \[
  K_0>\max\{1, C_3, C_p, C', \bar{K}_0\}(M_0+1),
  \]
   and then choose $0<\delta'<\bar{\delta}$ satisfying (\ref{eq4_2_4}). Combining   (\ref{eq4_2_16}),  (\ref{eq4_2_23}) and (\ref{eq4_2_10}),
  we obtain (\ref{eq4_M_h}) for $N+1$. The proof is finished.
  \end{proof}

  \noindent\emph{Proof of Theorem \ref{thm_main}}:
    The existence of the solutions $(u_{\lambda}, p_{\lambda})$ follows from Lemma \ref{lem4_M_1} (i).

  We next prove the converse part of the theorem.
  Let $\tilde{\delta}>0$ and $(u, p)$ be a $C^2$ $(-1)$-homogeneous axisymmetric solution of (\ref{NS}) in $\Omega_{\tilde{\delta}}\setminus\{x'=0\}$ satisfying $|u|=o(|x'|^{-1})$ as $x'\to 0$ for each fixed $x_3>0$. By Lemma \ref{lem3_1}, there exists
  $\lambda=(c_0, d_0, b_{0}^{\rho}, b_{0}^{\phi})\in \R^4$ associated with $(u, p)$ through (\ref{eq1_a_3}). Let $\{(u_n, p_n)\}_{n=0}^{\infty}$ be the sequence constructed in Lemma \ref{lem3_2} from this $\lambda$, and define $h_N, r_N$ by (\ref{eq4_M_2_N}) for $N\in \mathbb{N}$.
    Applying Lemma \ref{lem4_M_1} (i) with $K=\{\lambda\}$, there exists $\delta_{\lambda}>0$, such that the series $\sum_{n=0}^\infty u_n$ and $\sum_{n=0}^\infty p_n$ converge in $C^2_{loc}(\Omega_{\delta_{\lambda}}\setminus\{x'=0\})$ and define a smooth $(-1)$-homogeneous axisymmetric solution $(u_{\lambda}, p_{\lambda})$ of (\ref{NS}) in $\Omega_{\delta_{\lambda}}\setminus\{x'=0\}$.
    By Lemma \ref{lem4_M_2}, for any fixed $0<\epsilon<1/2$, there exist $0<\delta'<\tilde{\delta}$ and $K_0>0$ such that $h_{N}, r_{N}$ satisfy (\ref{eq4_M_h}) in $\Gamma_{\delta'}$ for $N\ge 0$.

  Choose $0<\delta_1\le \min\{\delta', \delta_{\lambda}, 1\}$ sufficiently small such that
  \[
  K_0\delta_1^{2(1-\epsilon)}<1.
  \]
 Let  $V\subset \subset \Omega_{\delta_1}\setminus\{x'=0\}$ be an arbitrary compact subset.  Then, by (\ref{eq4_M_h}) and the homogeneity of $h_N$ and $r_N$, together with (\ref{eqE_0_4}), we have
\[
\|h_N\|_{C^2(V)}+\|r_N\|_{C^2(V)}\le C_V(N+1)^3K_0^{N+1}\delta_1^{2(N+1)(1-\epsilon)}\to 0,
\quad \textrm{as }N\to \infty,
\]
where $C_V>0$ is a constant depending only on $V$.
 By Lemma \ref{lem4_M_1} we have  $\|\sum_{n=0}^{N}u_n-u_{\lambda}\|_{C^2(V)}\to 0$ as $N\to \infty$. So
   \[
    \|u-u_{\lambda}\|_{C^2(V)}\le  \|\sum_{n=0}^{N}u_n-u_{\lambda}\|_{C^2(V)}+\|h_N\|_{C^2(V)}\to 0,
  \]
as $N\to \infty$. Thus $u=u_{\lambda}$ in $\Omega_{\delta_1}\setminus\{x'=0\}$. Similarly, we have $p=p_{\lambda}$ in $\Omega_{\delta_1}\setminus\{x'=0\}$.

Finally, we prove the uniqueness of the $(-1)$-homogeneous axisymmetric solution $(u_{\lambda}, p_{\lambda})$ of (\ref{NS}) in $\Omega_{\delta}\setminus\{x'=0\}$
 satisfying (\ref{eq1_a_3}) and $|u_{\lambda}|=o(|x'|^{-1})$ as $x'\to 0$ for each fixed $x_3>0$.
Suppose that $(u,p)$ is another such solution
with the same parameter $\lambda$. The argument above shows that $(u,p)=(u_{\lambda}, p_{\lambda})$ in $\Omega_{\delta_1}\setminus\{x'=0\}$ for some $0<\delta_1<\delta$. Since both solutions are $(-1)$-homogeneous and axisymmetric, their traces on $\{x_3=1\}$ solve the same ODE system (\ref{eqNC_C}) for $0<\rho<\delta$.
Choose any $\rho_0\in (0, \delta_1)$. The local equality above implies the two solutions have the same Cauchy data at $\rho_0$. Since every point in $(0, \delta)$ is an ordinary point of (\ref{eqNC_C}), standard ODE uniqueness theory implies that $(u, p)=(u_{\lambda}, p_{\lambda})$ for $0<\rho<\delta$.

  The proof is finished.
  \qed

  \medskip

\noindent  \emph{Proof of Proposition \ref{prop1_1}}: It suffices to prove the result for $L\ge 2$, since the cases $L=0, 1$ follow from the case $L=2$.
By Lemma \ref{lem4_M_1} (ii), repeated use of (\ref{eqE_0_1}), and the relations between Cartesian and cylindrical coordinates, we obtain  (\ref{eqthm1_2}) on $\Gamma_{\delta}$. Since $u_{\lambda}$ is $(-1)$-homogeneous and $p_{\lambda}$ is $(-2)$-homogeneous, we have (\ref{eqthm1_2}) in $\Omega_{\delta}\setminus\{x'=0\}$.
\qed

\section{Singular force of Type II solutions}
 \label{sec_F}

In this section, we compute the force generated across the singular ray $\{x'=0, x_3>0\}$ by the local $(-1)$-homogeneous axisymmetric solutions of (\ref{NS}) constructed in Theorem \ref{thm_main}, and prove Proposition \ref{propF}.

\medskip

\noindent\emph{Proof of Proposition \ref{propF}}:
The proof follows the argument in \cite{LY} for global Type II solutions, which necessarily have no swirl. We include the details needed to treat the local setting and the possible presence of nonzero swirl.

\noindent 1. Let $\delta>0$ and $\Omega=\Omega_{\delta}$ be defined by (\ref{eq_Cone}).  Let $\lambda=(c_0, d_0, b_0^{\rho}, b_0^{\phi})\in \R^4$ and $(u, p)=(u_{\lambda}, p_{\lambda})$ be as in Proposition  \ref{propF}.
For $h_1<h_2$ and $s>0$, define
\[
 V_{h_1, h_2, s, \delta}:=\{x\in \Omega_{\delta}\mid |x'|<s, h_1<x_3<h_2\}.
\]
  Let
\[
   T_{ij}:=p\delta_{ij}+u_iu_j-\partial_ju_i-\partial_iu_j, \quad 1\le i, j \le 3,
   \]
   be the stress tensor.

 Recall that $(r, \phi, x_3)$ are  the standard cylindrical coordinates, $\rho=r/x_3$, $e_{\rho}=e_r$ and $u^{\rho}=u^r$.
 By Theorem \ref{thm_main} and Proposition \ref{prop1_1}, we have, for $x\in \Omega_{\delta}\setminus\{x'=0\}$, that
        \be\label{eqF_1_1}
            |u|\le \frac{C_0(|\ln \rho|+1)}{x_3}, \quad |p|\le \frac{C_0(|\ln \rho|+1)}{x_3^2}, \quad |\nabla u|\le\frac{C_0}{x_3^2\rho}, \quad |T_{ij}|\le \frac{C_0}{x_3^2\rho},
            \ee
            for some constant $C_0>0$ depending only on $\lambda$ and $\delta$.
            Consequently, $T_{ij}\in L^q_{loc}(\Omega)$ for any $1\le q<2$ and $1\le i, j\le 3$.

 Let $\varphi\in C_c^{\infty}(\Omega)$ be a test function. Let $C>0$ denote a constant depending on $(u, p)$, $\varphi$ and $\delta$, which may vary from line to line, and $O(1)$ be a quantity bounded by such a constant.
 Then
 there exist $h_2, R>0$ such that $\supp \varphi\subset\subset V_{0, h_2, R, \delta}$.
  For any $s>0$, denote
 $
   V_s:=V_{0, h_2, s, \delta}.
 $
Then
\[
   \partial V_{s}\cap\{|x'|=s\}=\{|x'|=s,  s/\delta\le x_3\le h_2\}, \quad \forall s>0.
\]
Since $\supp \varphi$ is compactly contained in $\Omega$, there exists some $0<\epsilon_0<\min\{R, 1\}$, such that for all $0<\epsilon<\epsilon_0$, $\supp \varphi\subset \{|x'|<R, \epsilon/\delta<x_3<h_2\}$.

\medskip

\noindent 2. We first identify the divergence-free condition across the singular ray.
Arguing as in \cite{LY}, we claim that
\be\label{eqF_2_0}
  \int_{\Omega}u  \cdot\nabla \varphi dx=0.
\ee
Indeed, using (\ref{eqF_1_1}), the fact that $\dive u=0$ in $V_R\setminus V_{\epsilon}$, and the inclusion $\supp \varphi\subset\subset V_R$, which implies $\varphi=0$ on $\partial (V_R\setminus V_{\epsilon})\setminus (\partial V_{\epsilon}\cap\{|x'|=\epsilon\})$,  we have, for $0<\epsilon<\epsilon_0$, that
\[
\begin{split}
 &  | \int_{\Omega}u\cdot \nabla \varphi dx|   \le |\int_{V_R\setminus V_{\epsilon}}u\cdot \nabla \varphi dx|+|\int_{V_{\epsilon}}u\cdot \nabla \varphi dx|
   \le |\int_{\partial V_{\epsilon}\cap\{|x'|=\epsilon\}}u \cdot\nu \varphi d\sigma| +|\int_{V_{\epsilon}}u\cdot \nabla \varphi dx|\\
   & \le  C\epsilon\int_{\epsilon/\delta}^{h_2}\frac{|\ln(\epsilon/x_3)|+1}{x_3}dx_3+C \int_{\epsilon/\delta}^{h_2}\int_0^{\epsilon}\frac{r(|\ln (r/x_3)|+1)}{x_3}dr dx_3 \le C\epsilon (|\ln\epsilon|^2+1),
   \end{split}
\]
which tends to zero as $\epsilon\to 0$. Here $\nu$ denotes the outward unit normal of $\partial V_{\epsilon}$.

\medskip

\noindent 3. Next, we study the first equation in (\ref{NS}) across $\{x'=0, x_3>0\}$.
Let
\[
   F_j[\varphi]:=\int_{\Omega}T_{ij}\partial_i\varphi dx.
\]
 Below we let $0<\epsilon<\epsilon_0$. By (\ref{NS}), we have $\partial_iT_{ij}=0$ in $\Omega\setminus\{x'=0\}$, $1\le j\le 3$. Integration by parts, using the fact that  $\varphi=0$ on $\partial (V_R\setminus V_{\epsilon})\setminus (\partial V_{\epsilon}\cap\{|x'|=\epsilon\})$,
 we have
\begin{equation*}%\label{eqF_6}
\begin{split}
F_j[\varphi] & =\int_{V_R}T_{ij}\partial_i\varphi dx  =\int_{V_R\setminus V_{\epsilon}}T_{ij}\partial_i\varphi dx+\int_{V_\epsilon}T_{ij}\partial_i\varphi dx=-L_j+\int_{V_\epsilon}T_{ij}\partial_i\varphi dx, 
   %-F_j[\varphi] & =-\int_{V_R}T_{ij}\partial_i\varphi dx  =-\int_{V_R\setminus V_{\epsilon}}T_{ij}\partial_i\varphi dx-\int_{V_\epsilon}T_{ij}\partial_i\varphi dx=L_j-\int_{V_\epsilon}T_{ij}\partial_i\varphi dx %\\
 % &  =\int_{\partial V_{\epsilon}\cap\{|x'|=\epsilon\}}T_{ij} \nu_i \varphi d\sigma-\int_{V_\epsilon}T_{ij}\partial_i\varphi dx.
          \end{split}
\end{equation*}
where
\[
   L_j :=\int_{\partial V_{\epsilon}\cap\{|x'|=\epsilon\}}T_{ij}\nu_i \varphi d\sigma.
\]
Since  $T_{ij}\in L^1_{loc}(\Omega)$, we have $\int_{V_\epsilon}T_{ij}\partial_i\varphi dx=o_{\epsilon}(1)$, and
\begin{equation}\label{eqF_4}
   -F_j[\varphi] %= -\int_{V_R}T_{ij}\partial_i\varphi dx
    =L_j+o_{\epsilon}(1), \quad j=1, 2, 3,
\end{equation}
where $o_{\epsilon}(1)\to 0$ as $\epsilon\to 0$.
Write
\[
  L_j=L_j^{(1)}+L_j^{(2)},
\]
where
\[
  L_j^{(1)}=\int_{\partial V_{\epsilon}\cap\{|x'|=\epsilon\}}T_{ij} \nu_i \varphi(0,0,x_3)d\sigma, \quad L_j^{(2)}=\int_{\partial V_{\epsilon}\cap\{|x'|=\epsilon\}}T_{ij} \nu_i (\varphi-\varphi(0,0,x_3))d\sigma.
\]
By the smoothness of $\varphi$, we have $\varphi(x)=\varphi(0, 0, x_3)+O(|x'|)$ on $\{|x'|=\epsilon\}$. By this, (\ref{eqF_1_1}), the fact that  $\rho=\epsilon/x_3$ on $\partial V_{\epsilon}\cap\{|x'|=\epsilon\}$, we have, for $j=1,2,3$, that
\begin{equation}\label{eqF_5}
    |L_j^{(2)}|\le C\epsilon \sum_{i=1}^{3}\int_{\partial V_{\epsilon}\cap\{|x'|=\epsilon\}}|T_{ij}|d\sigma\le C\epsilon\int_{\epsilon/\delta}^{h_2}\frac{1}{x_3}dx_3\le C\epsilon(|\ln\epsilon|+1)\to 0, \textrm{ as } \epsilon\to 0.
\end{equation}

\noindent Step 3-1. We show
 \be\label{eqF_3_1}
      L^{(1)}_j=0, \quad j=1,2, \quad \forall 0<\epsilon<\epsilon_0.
      \ee
  We only prove (\ref{eqF_3_1}) for $j=1$. The proof for $j=2$ is similar.

   In cylindrical coordinates $(r, \phi, x_3)$, the associated orthonormal basis vectors are
$e_r=e_{\rho}=(\cos\phi,\sin\phi,0)$, $e_{\phi}=(-\sin\phi, \cos\phi, 0)$,   $e_{3}=(0,0,1)$.
We write
  $x=r e_r+x_3e_3$, $x'=r e_r$, $u=u^re_r+u^{\phi}e_{\phi}+u^3e_3$, and $u'=u^re_r+u^{\phi}e_{\phi}$. Since $(u, p)$ is axisymmetric, we have
 $u^r(=u^{\rho}), u^{\phi}, u^3, p$ are independent of $\phi$. By computation,
 \be\label{eqF_3_2}
\begin{split}
    & x'\cdot u'=r u^r,\quad
    \nabla (x'\cdot u')=\nabla(r u^r)=\partial_r (r u^r)e_r+\partial_3 (r u^r)e_3, \quad x'\cdot\nabla' u_i=r\partial_ru_i,
   \end{split}
\ee
where $i=1, 2, 3$.
On $\partial V_{\epsilon}\cap \{|x'|=\epsilon\}$, the outward unit normal vector is $\nu=\frac{1}{r}(x_1,x_2, 0)$. Note
$u_1=u^r\cos\phi-u^{\phi}\sin\phi$. Using the above, we have
\[
  \begin{split}
   & T_{i1} \nu_i  =\frac{1}{r}\left(px_1+x'\cdot u'u_1-x'\cdot\nabla' u_1-\partial_1(x'\cdot u')+u_1\right)\\
                    & =\frac{1}{r}\left(pr\cos\phi+r u^r(u^r\cos\phi-u^{\phi}\sin\phi)-r\partial_r(u^r\cos\phi-u^{\phi}\sin\phi)-\partial_r(ru^r)\cos\phi+u^r\cos\phi-u^{\phi}\sin\phi\right)\\
                   &=G_1(r,x_3)\cos\phi+G_2(r, x_3)\sin\phi,
  \end{split}
\]
where
\[
   G_1(r, x_3)=p+|u^r|^2-2\partial_ru^r, \quad G_2(r, x_3)=\frac{1}{r}(-ru^ru^{\phi}+r\partial_ru^{\phi}-u^{\phi}).
\]
So
\[
\begin{split}
    L_1^{(1)} & = \int_{\partial V_{\epsilon}\cap\{|x'|=\epsilon\}}T_{i1}\nu_i \varphi(0,0,x_3)d\sigma\\
    &  =\epsilon\int_{\epsilon/\delta}^{h_2}\left(G_1(\epsilon,x_3)\int_{0}^{2\pi}\cos\phi d\phi+G_2(\epsilon,x_3)\int_{0}^{2\pi}\sin\phi d\phi\right)\varphi(0,0,x_3)dx_3
     =0.
          \end{split}
\]
By a similar argument we also have $L_2^{(1)}=0$. So (\ref{eqF_3_1}) is proved.

\medskip

\noindent Step 3-2. We show
\be\label{eqF_4_1}
  \lim_{\epsilon \to 0}L^{(1)}_3=2\pi c_0\int_{0}^{h_2}\ln |x_3|\partial_3\varphi(0,0,x_3)dx_3.
\ee
   Recall
 \[
    L_3^{(1)}=\int_{\partial  V_{\epsilon}\cap\{|x'|=\epsilon\}}T_{i3}\nu_i\varphi(0,0,x_3)d\sigma,
 \]
 and
 $
    T_{i3}=u^iu^3-\partial_3u^i-\partial_iu^3
 $, $i=1, 2$.
 Using (\ref{eqF_3_2}), we obtain
 \be\label{eqF_4_2}
   \begin{split}
   & T_{i3} \nu_i =\frac{1}{r}(x'\cdot u'u^3-x'\cdot\nabla' u^3-\partial_3(x'\cdot u'))
    =u^ru^3-\partial_r u^3-\partial_3u^r.
   \end{split}
 \ee
 Note $\rho=r/x_3\le \delta$ in $\Omega$. By the explicit formula (\ref{eq1_a_2}) of the leading terms, the estimates in the Claim of Lemma \ref{lem4_M_1},
  and (\ref{eqE_0_1}), we have
   \[
  u^3=O(1)\frac{1}{x_3}(|\ln \frac{r}{x_3}|+1),\quad  \partial_r u^3
  =\frac{c_0}{x_3r}+O(1)\frac{1}{x_3^2}(|\ln \frac{r}{x_3}|+1),
 \]
 \[
    u^r=O(1)\frac{r}{x_3^2}(|\ln \frac{r}{x_3}|+1), \quad \partial_3u^r
    =O(1)\frac{r}{x_3^3}(|\ln\frac{r}{x_3}|+1).
 \]
By (\ref{eqF_4_2}) and the above, we have
  \be\label{eqF_4_3}
   T_{i3} \nu_i=-\frac{c_0}{x_3r}+O(1)\frac{1}{x_3^2}(|\ln \frac{r}{x_3}|+1)=: -\frac{c_0}{x_3r}+G(x).
 \ee
  Then
 \[
 \begin{split}
   L_3^{(1)} & =\int_{\partial V_{\epsilon}\cap\{|x'|=\epsilon\}}T_{i3}\nu_i\varphi(0,0,x_3)d\sigma\\
   & =-c_0\int_{\partial V_{\epsilon}\cap\{|x'|=\epsilon\}}\frac{\varphi(0,0,x_3)}{x_3r}d\sigma+\int_{\partial V_{\epsilon}\cap\{|x'|=\epsilon\}}G(x)\varphi(0,0,x_3)d\sigma
    =:A+B.
   \end{split}
 \]
Since $\supp \varphi\subset \{|x'|<R, \epsilon/\delta<x_3<h_2\}$, we have $\varphi(0, 0, h_2)=\varphi(0, 0, \epsilon/\delta)=0$
and $\varphi(0, 0, x_3)=O(x_3)$.
Then
\[
   \begin{split}
     A &
     =-2\pi c_0\int_{\epsilon/\delta}^{h_2}\frac{\varphi(0, 0, x_3)}{x_3}dx_3\\
     &  =-2\pi c_0\ln |x_3|\varphi(0, 0, x_3)|_{\epsilon/\delta}^{h_2}+2\pi c_0\int_{\epsilon/\delta}^{h_2}\ln |x_3| \partial_3\varphi(0, 0, x_3)dx_3\\
     & =
     2\pi c_0\int_{\epsilon/\delta}^{h_2}\ln |x_3| \partial_3\varphi(0, 0, x_3)dx_3.
   \end{split}
\]
On the other hand, by (\ref{eqF_4_3}) and the fact $\varphi(0,0,x_3)=O(x_3)$, we have
\[
  |B|\le C\epsilon\int_{\epsilon/\delta}^{h_2}\frac{1}{x_3^2}(|\ln \frac{\epsilon}{x_3}|+1)\cdot x_3dx_3\le C\epsilon (|\ln\epsilon|^2+1).
\]
Thus
\[
  L_3^{(1)}=A+B=O(\epsilon(|\ln\epsilon|^2+1))+2\pi c_0\int_{\epsilon/\delta}^{h_2}\ln |x_3| \partial_3\varphi(0, 0, x_3)dx_3.
\]
Since $\ln |x_3| \partial_3\varphi(0, 0, x_3)\in L^1(0, h_2)$, sending $\epsilon\to 0$ in the above, we have  (\ref{eqF_4_1}).

By (\ref{eqF_2_0}), we have $\dive u=0$ in the sense of distributions. Moreover, applying (\ref{eqF_2_0}) with $\partial_j\varphi$ in place of $\varphi$, we obtain
\[
   \int_{\Omega}\partial_ju^i\partial_i\varphi dx=-\int_{\Omega} u^i\partial_i\partial_j\varphi dx=0.
\]
It follows from the definition of $T_{ij}$ that
\[
   -F_j[\varphi]=\int_{\Omega}(\nabla u^j\cdot \nabla \varphi-u^iu^j\partial_i\varphi-p\partial_j\varphi) dx.
\]
Combining this with %(\ref{eqF_6}),
 (\ref{eqF_4}), (\ref{eqF_5}), (\ref{eqF_3_1}) and (\ref{eqF_4_1}), we have (\ref{eqF_2}). Proposition \ref{propF} is proved.
\qed

\end{document}